\begin{document}

%!TEX root = ../main.tex

% [short]{long title}
\title[Graded and Geometric Parabolic Induction]{Graded and Geometric Parabolic Induction for Category $\mathcal{O}$}

\author{Jens Niklas Eberhardt}
\address{Department of Mathematics, University of California Los Angeles\\520 Portola Plaza, Los Angeles, CA 90095
}
\email{jneberhardt@gmail.com}  % 'optional'
%\date{February, 2016}
\subjclass[2010]{17B10, 22E46}
%\keywords{Parabolic induction, Lie algebras, category $\mathcal{O}$, Soergel modules, Koszul duality, mixed Tate motives}
\begin{abstract}
We prove that the parabolic induction functor on BGG-category $\mathcal{O}$ associated to a complex reductive Lie algebra is \emph{gradable}, that is, lifts to graded category $\mathcal{O}$ as constructed by Beilinson--Ginzburg--Soergel.
Graded category $\mathcal{O}$ is equivalent to a category of stratified mixed Tate motives on a corresponding flag variety as recently defined by Soergel--Wendt. 
The graded version of parabolic induction is induced by a \emph{geometric parabolic induction} functor we construct on the level of stratified mixed Tate motives.% using the six functor formalism.

We also describe the effect of parabolic induction on the level of Soergel modules.
\end{abstract}
\maketitle
\setcounter{tocdepth}{1} 
\tableofcontents

\section{Introduction}
%!TEX root = ../main.tex
\subsection{Graded Parabolic Induction}
Let $\g\supset\borel\supset\cartan$ be a complex reductive Lie algebra with a Borel and Cartan subalgebra. Fix a parabolic subalgebra $\g\supset\para\supset\borel$ and denote its reductive Levi factor by $\para\twoheadrightarrow\levi$. Denote by $\Weyl\supset\Wi$ the Weyl groups of $\g$ and $\levi$.

The goal of this article is to construct a \emph{graded} and \emph{geometric} version of \emph{parabolic induction} for modules in the BGG-category $\mathcal O$:
$$\ind: \cato{\levi}{}\rightarrow \cato{\g}{},\, M\mapsto \Univ{\g}\otimes_{\Univ{\para}}\res_{\levi}^\para M.$$
 We will, amongst other things, prove:
\begin{theorem*}[Theorem \ref{thm:paraindgradable}]
	Let $\lambda\in \cartan^*$ be a dominant integral weight and $w$ a shortest coset representative in $\Wi\backslash\Weyl$. There is a functor $\grind$ making the following diagram commute (up to natural isomorphism)
	\begin{center}
		\begin{tikzcd}
			\catoz{\levi}{w\cdot\lambda}\arrow[r,"\grind"]\arrow[d,"v"] &\catoz{\g}{\lambda}\arrow[d,"v"]\\
			\cato{\levi}{w\cdot\lambda}\arrow[r,"\ind"] &\cato{\g}{\lambda}
		\end{tikzcd}
	\end{center}
	and fulfilling $\grind\langle n\rangle\cong\langle n\rangle \grind$, where $\langle-\rangle$ is the shift of grading.
\end{theorem*}
Here $v:\mathcal O^\Z\rightarrow \mathcal O$ denotes the \emph{graded category $\mathcal O$} ,
%and the functor forgetting the gradind
as constructed in \cite{Soe90} and \cite{BGS}.
In the words of \cite{Str}, where similar questions for translation functors are discussed, this means that parabolic induction (at least for integral blocks) is \emph{gradable}.
We construct $\grind$ with geometric methods, which we will explain now.
\subsection{Geometric Parabolic Induction}
As envisioned in \cite{BG}, the grading of category $\mathcal O$ is deeply related to the \emph{mixed geometry} of flag varieties: There should be a derived equivalence between each block of $\mathcal O^\Z$ and a category of \emph{mixed sheaves} on an associated flag variety. 
This vision was realized in \cite{Soe90}, \cite{BGS} and finally \cite{SoeWe}, where an equivalence of categories (up to adding a root of the Tate twist)
$$\MTDer{(B)}{G/Q}\stackrel{\sim}{\rightarrow}\Derb(\mathcal{O}_\lambda^\Z(\g))$$
between \emph{stratified mixed Tate motives} on a (partial) flag variety $G/Q$ for the Langlands dual algebraic group $G/\C$ and a derived (singular) block $\mathcal{O}_\lambda^\Z(\g)$ was constructed. This equivalence is indeed a form of \emph{Koszul duality}: the perverse t-structure on the left hand side corresponds to the Koszul dual t-structure on the right, and vice versa.

Stratified mixed Tate motives are certain constructible motivic sheaves. They behave similarly to mixed $\ell$-adic sheaves and mixed Hodge modules (with the advantage that they have no extensions between Tate motives $\C(n)$). In particular, they are equipped with a full six functor formalism, which we can use to construct a geometric version of parabolic induction as follows.
\begin{theorem*}[Theorem \ref{thm:main} and \ref{thm:paraindgradable}]
	Let $\lambda\in\cartan^*$ be a regular dominant integral weight and $w$ be a shortest coset representative in $\Wi\backslash\Weyl$. Then the following diagram commutes up to natural isomorphism
	\begin{center}
		\begin{tikzcd}[column sep=2cm,row sep=.5cm]
			\MTDer{(B)}{P/B}\arrow[r,"\gindv{w}","\gind{w}"']\arrow[d]\arrow[dd,"v"',bend right=72]&\MTDer{(B)}{G/B}\arrow[d]\arrow[dd,"v",bend left=72]\\
			\Derb(\catoz{\levi}{w\cdot\lambda})\arrow[d,"v"] &\Derb(\catoz{\g}{\lambda})\arrow[d,"v"] \\
			\Derb(\cato{\levi}{w\cdot\lambda})\arrow[r,"\ind"] &\Derb(\cato{\g}{\lambda})
		\end{tikzcd}
	\end{center}
	Here $G\supset P\supset B$ corresponds to $\g\supset\para\supset\borel$ and the functor $\gind{w}=\gindv{w}$, which we call \textbf{geometric parabolic induction}, is defined via maps
	\begin{center}
		\begin{tikzcd} 
			P/B& 
			\arrow[two heads,l,"\pr{w}"']
			PwB/B \arrow[hook,r,"\operatorname{h_w}"] & 
			G/B.
		\end{tikzcd}
	\end{center}
\end{theorem*}
We actually show a stronger statement which also holds for singular weights $\lambda\in\cartan^*$ and allows us to prove that parabolic induction is also gradable in this case.

\subsection{Soergel modules}
In order to prove these theorems, we use the combinatorial description of derived blocks of category $\mathcal{O}$ and stratified mixed Tate motives on flag varieties in terms of the homotopy category of \emph{Soergel modules}. Let $\Coi=H^*(G/B,\C)\rightarrow \Coi'=H^*(P/B,\C)$ be the cohomology rings of the flag varieties $G/B\supset P/B$. Then for a reduced expression $w=s_n\dots s_1\in \Weyl$ define the following complex of Soergel bimodules (it is in fact an instance of a \emph{Rouquier complex}) over $\Coi$
$$\underbar{R}_{\underline{w}}\defi \Rouqs{s_1}\otimes_{\Coi}\cdots\otimes_{\Coi} \Rouqs{s_n},\text{where}$$
$$\Rouqs{s}\defi\dots\rightarrow0\rightarrow \Coi\rightarrow \Coi\otimes_{\Coi^s}\Coi\langle 2\rangle\rightarrow0\rightarrow\dots.$$
With this notation we show:
\begin{theorem*}[Theorem \ref{thm:sindregular}, \ref{thm:sindregulargeom} and \ref{thm:main}]
	Let $\lambda\in\cartan^*$ be a dominant integral regular weight and $w$ a shortest coset representative in $\Wi\backslash\Weyl$. Then the following diagram of functors commutes (up to natural isomorphism)
	\begin{center}
		\begin{tikzcd}
			\MTDer{(B)}{P/B}\arrow[r,"\gind{w}"]\arrow[ddd,"v"', bend right=71]\arrow[d,"\wr"]&\MTDer{(B)}{G/B}\arrow[d,"\wr"']\arrow[ddd,"v", bend left=71]\\
			%\Hotb(\MTDer{(B)}{P/B}_{w=0})\arrow[d,"\wr","\Hyp"']&\Hotb(\MTDer{(B)}{G/B}_{w=0})\arrow[d,"\wr"',"\Hyp"]\\
			\Hotb(\Coi'\Smodules^{\Z,ev})\arrow[r,"\sind{w}{}"]\arrow[d,"v"']&\Hotb(C\Smodules^{\Z,ev})\arrow[d,"v"]\\
			\Hotb(\Coi'\Smodules)\arrow[r,"\sind{w}{}"]\arrow[d,"\wr"]&\Hotb(\Coi\Smodules)\arrow[d,"\wr"']\\
			%\Hotb(\Proje\cato{\levi}{w\cdot\lambda})\arrow[u,"\wr"',"\V{\levi}{w\cdot\lambda}"]\arrow[d,"\wr"]&\Hotb(\Proje \cato{\g}{\lambda})\arrow[u,"\wr","\V{\g}{\lambda}"']\arrow[d,"\wr"']\\
			\Derb(\cato{\levi}{w\cdot\lambda})\arrow[r,"\ind"]&\Derb(\cato{\g}{\lambda}).
		\end{tikzcd}
	\end{center}
	Here by $\operatorname{SMod}^{(\Z,ev)}$ we denote the categories of (evenly graded) Soergel modules and
	$$\sind{w}{}: \Coi'\Smodules^{\Z,ev}\rightarrow \Coi\Smodules^{\Z,ev}, M\mapsto \underbar{R}_{\underline{w}}\otimes_{\Coi}\res_{\Coi'}^{\Coi}M.$$
\end{theorem*}
Again, we prove a more general version which also applies to singular weights $\lambda\in\cartan^*$. Our proof strategy is the following: Firstly, we show the statement for $w=e$, which is the easiest case, since then $\ind$ maps projectives to projectives and $\gind{e}$ is weight exact. We  then carefully analyse how (geometric) parabolic induction interacts with (geometric) \emph{wall crossing} functors. Comparing the results, we are able to prove the general case by an induction on the length of $w$.
\subsection{Conventions}
By a $\C$-algebra $A$ we always mean a (not necessarily commutative) $\C$-algebra with unit. By $A\modules$ we denote the category of \emph{finitely generated} $A$-modules. If $A=\bigoplus_{n\in\Z}A_n$ is additionally $\Z$-graded, we denote by $A\modules^\Z$ the category of graded $A$-modules and by $A\modules^{\Z,ev}$ the category of evenly graded modules, i.e. those modules which are concentrated in even degrees. For a graded module $M$ its $n$-th shift by $M\langle n\rangle$, where
$(M\langle n\rangle)^i=M^{i+n}.$

For an abelian category $\mathcal A$, we denote by $\Der(\mathcal A)$ and $\Derb(\mathcal A)$ its (bounded) derived category and by $\Proje\mathcal A$ the full additive subcategory of projective objects in $\mathcal A$.
For an additive category $\mathcal A$, we denote by $\Hot(\mathcal A)$ and $\Hotb(\mathcal A)$ its (bounded) homotopy category of chain complexes.
\subsection{Acknowledgements}
This paper is based on the authors dissertation thesis \cite{phdthesis}. 
I would like to thank my doctoral advisor Wolfgang Soergel for many instructive
discussions and posing the questions treated in this article. I also thank Simon Riche for many helpful comments on my thesis.

The author was financially supported by the DFG Graduiertenkolleg 1821 “Cohomological Methods in Geometry”.
%Todo:
%\begin{itemize}
%	\item Kapiteleinleitungen
%	\begin{itemize}
%		\item Darstellungstheorie
%		\item Geometrie
%		\item Setup (vorallem bei den Garben und Tilting beschreiben dass das kopiert ist)
%		\item Resultate.
%	\end{itemize}
%	\item ZWEITKORREKTOR OO
%	\item Wie zitiere ich meine eigene ArXiv Arbeit
%\end{itemize}
\section{Category $\mathcal{O}$ and Parabolic Induction}
%!TEX root = ../main.tex
\subsection{Setup}
%\subsection{Lie algebras and root systems}\label{subsec:liealgebrasandrootsystems}
Let $\g\supset\borel\supset\cartan$ be a reductive complex Lie algebra together with a Borel and Cartan subalgebra. Denote by
	$$\cartan^*\supset\Phi_\g\supset\Phi^+_\g\supset\Delta_\g$$
the space of weights, set of roots, positive and simple roots corresponding to $\g\supset\borel$. By a superscript minus as in $\Phi^-_\g=\Phi_\g\backslash\Phi^+_\g$ or $\borel^-$ we always denote the corresponding negative or opposite. For a root $\alpha\in\Phi$ denote by $\alpha^\vee\in\cartan$ its coroot and by $s_\alpha$ the corresponding reflection. Let
\begin{align*}
	\Weyl&=\langle \, s_\alpha \, |\, \alpha \in \Delta_\g \, \rangle \\
	\Sim&=\left\{ \, s_\alpha \, |\, \alpha \in \Delta_\g \, \right\}
\end{align*}
be the Weyl group and set of simple reflections. Denote by $\langle-,-\rangle$ the natural evaluation pairing on $\cartan^*\otimes\cartan$ and by
\begin{align*}
	\Lambda_\g&=\left\{ \, \lambda\in\cartan^* \, |\, \langle\lambda,\alpha^\vee\rangle\in \Z\text{ for all } \alpha \in \Delta_\g \, \right\}\\
	\Lambda_\g^+&=\left\{ \, \lambda\in\cartan^* \, |\, \langle\lambda,\alpha^\vee\rangle\in \Zp \text{ for all } \alpha \in \Delta_\g \, \right\}
\end{align*}
the integral weight lattice and the set positive integral weights. 
For an integral weight $\lambda\in\Lambda_\g$ denote the unique weight in $\Weyl\lambda\cap\Lambda^+_\g$ by $\Sdom{\lambda}$.

Let $\rho\in\cartan^*$ be the half-sum of positive roots and denote by 
	$$w\cdot\lambda=w(\lambda+\rho)-\rho$$
the dot-action of $\Weyl$ on $\cartan^*$.  We denote the stabilizer of a weight $\lambda\in\cartan^*$ with respect to the dot-action by $\W_{\g,\lambda}$.

There is a partial ordering on the set of weights given by
	$$\lambda\geq\mu \stackrel{def}\Leftrightarrow{} \lambda-\mu \in \Lambda_\g^+.$$
A weight $\lambda\in\cartan^*$ is called \emph{dominant} (for $\Phi^+_\g$) if $\langle\lambda+\rho,\alpha^\vee\rangle\notin\Z_{<0}$ for all $\alpha\in\Phi^+_\g$. The set of integral dominant weights is hence $\Lambda^+_\g-\rho$.

Now let $\g\supset\para\twoheadrightarrow\levi$ be a parabolic and Levi factor of $\g$ such that $\para\supset\borel$, for simplicity we choose a splitting $\levi\subset\para$. We denote by $\nili\subset\para$ the nilpotent radical of $\para$ and by $\zi\subset\levi$ the center of $\levi$. Then we have decompositions of $\para$ and $\g$ into
\begin{align*}
	\para&=\levi\oplus\nili=\cartan\oplus\bigoplus_{\alpha\in\Phi_\levi}\g_\alpha\oplus\bigoplus_{\mathclap{\alpha\in\Phi_\g^+\backslash\Phi_\levi^+}}\g_\alpha\text{ and}\\
	\g&=\nili^-\oplus\para=\nili^-\oplus\levi\oplus\nili.
\end{align*}
For $\alpha \in \Delta$, let $\varpi_\alpha\in\cartan^*$, respectively $\varpi_\alpha^\vee\in\cartan$, be the fundamental weights; they form a dual basis to $\Delta^\vee$, respectively $\Delta$, and are well-defined if we additionally require $\varpi_\alpha(\mathfrak{z}_\g)=\{0\}$ and $\varpi_\alpha^\vee\in\left[\g,\g\right]$.
Then 
	$$\zi=\{H\in\cartan\,|\,\alpha(H)=0\text{ for all }\alpha \in \Delta_\g\backslash\Delta_\levi \}=\langle\varpi_\alpha^\vee \,|\, \alpha \in \Delta_\g\backslash\Delta_\levi\rangle_\C\oplus\mathfrak{z}_\g$$
and there is also a partial ordering on the set of $\zi$-weights, namely
	$$\nu\geq\nu' \stackrel{def}\Leftrightarrow{} \nu-\nu' \in \Zp\{\alpha|_{\zi} \,|\,\alpha\in\Delta_\g\backslash\Delta_\levi\}\text{ for }\nu,\nu'\in\zi.$$
A priori there are two different dot-actions of $\Wi$ on $\cartan^*$. They coincide since
$$w(\lambda+\rho)-\rho=w(\lambda+\rho_\levi)-\rho_\levi$$
for all $w\in\Wi$, where we use that
$$w(\rho-\rho_\levi)=\sum_{\mathclap{\alpha\in\Delta_\g\backslash\Delta_\levi}}w(\varpi_\alpha)=\sum_{\mathclap{\alpha\in\Delta_\g\backslash\Delta_\levi}}\varpi_\alpha=\rho-\rho_\levi.$$
We want to emphasize that we have two different notions of dominant weights now. A weight $\lambda\in\cartan^*$ is dominant for $\Phi_\g^+$ if $\langle\lambda+\rho,\alpha^\vee\rangle\notin\Z_{<0}$ for all $\alpha\in\Phi_\g^+$ and dominant for $\Phi_\levi^+$ if $\langle\lambda+\rho,\alpha^\vee\rangle\notin\Z_{<0}$
for all $\alpha\in\Phi_\g^+.$ The following Lemma explains how those two notions relate for integral weights. 
\begin{lemma}\label{lem:dominantsweightsgvsl}
	The weights in $\Lambda_\g$ which are dominant for $\Phi_\levi^+$ are precisely the weights of the form $w\cdot\lambda$ where $\lambda\in\Lambda_\g$ is dominant for $\Phi_\g^+$ and $w\in\W$ a shortest coset representative for $\Wi\backslash\Weyl$.
\end{lemma}
%See also Figure \ref{fig:sl2insl3} for an example.
\subsection{Category $\mathcal O$}
The BGG-category $\mathcal O$ (see \cite{BGG}) associated to a complex reductive Lie algebra with fixed Borel and Cartan subalgebra $\g\supset\borel\supset\cartan$ is the full subcategory of the category of $\g$-modules, $\g\modules$, given by 
\begin{align*}
	\cato{\g}{}\defi\left\{M\in\g\modules\,\,\middle| \,\,\parbox{170pt}{$\cartan$ acts semisimply on $M$,\\$\borel$ acts locally finitely on $M$,\\$M$ is finitely generated under $\g$}\right\}
\end{align*}
For a complex Lie algebra $\mathfrak n$, denote its universal enveloping algebra by $\Univ{\mathfrak n}$. For $\lambda \in \cartan^*$ let 
	$$\Ver{\g}{\lambda}\defi\Univ{\g}\otimes_{\Univ{\borel}}\C_\lambda$$
be the Verma module with highest weight $\lambda$ and 
\begin{center}
	\begin{tikzcd}
		\ProC{\g}{\lambda}\arrow[two heads,r]&\Ver{\g}{\lambda}\arrow[two heads,r]&\Simpl{\g}{\lambda}
	\end{tikzcd}
\end{center}
its projective cover and unique simple quotient in $\cato{\g}{}$. For $\lambda$ dominant, denote by
	$$\cato{\g}{\lambda}=\langle \Ver{\g}{w\cdot\lambda} \,|\, w\in\W_{\g,\left[\lambda\right]}\rangle_{\text{Serre}}\subset\cato{\g}{}$$
the full Serre subcategory of $\cato{\g}{}$ generated by the Verma modules $\Ver{\g}{w\cdot\lambda}$, where by $\W_{\g,\left[\lambda\right]}\subset\Weyl$ we denote the integral Weyl group of $\lambda$.
Then $\cato{\g}{}$ decomposes into blocks
	$$\cato{\g}{}=\bigoplus_{\mathclap{\substack{\lambda \in \cartan^*\\\text{dominant}}}}\cato{\g}{\lambda}$$
and we denote the functor projecting on a block $\cato{\g}{\lambda}$ by $\pr{\lambda}$.
\subsection{Generalities on Parabolic Induction}
\begin{figure}[h]
	\begin{center}
		\begin{tikzpicture}
		% straight arrows
		\fill[lightlight-gray] (0,3.3) rectangle (6.5,-3.3);
		\foreach \i in {0, 1, ..., 5} {
			%\draw[thick]  (\i*60:3)--((\i*60+60:3);
			%\draw[thick,->] (0, 0) -- (\i*60:3);
			\fill (\i*60:3) circle [radius=2pt];
		}
		% arc arrow
		% annotations
		\node[right](T) at (3, 0) {$\mathbf{t}\cdot \lambda$};
		\node[left] (S) at (2*60:3) {$s\cdot \lambda$};
		\node[left] (ST) at (3*60:3) {$st\cdot \lambda$};
		\node[left] (STS) at (4*60:3) {$sts\cdot \lambda$};
		\node[right] (TS) at (5*60:3) {$\mathbf{ts}\cdot \lambda$};
		\node[right] (E) at (60:3) {$\mathbf{e}\cdot \lambda$};
		\draw[thick] (0,-3.3) -- (0,3.3);
		\draw[dashed,rounded corners=1mm] (4.3,-0.4) rectangle (-4.3,0.4);
		\node[right] (IT) at (4.5,-0.1) {$\cato{\levi}{t\cdot\lambda}$};
		\draw[dashed,rounded corners=1mm] (2.7,2.6-0.4) rectangle (-2.7,2.6+0.4);
		\node[right] (IE) at (2.9,2.6-0.1) {$\cato{\levi}{\lambda}$};
		\draw[dashed,rounded corners=1mm] (2.7,-2.6-0.4) rectangle (-2.7,-2.6+0.4);
		
		\node[right] (IE) at (2.9,-2.6-0.1) {$\cato{\levi}{ts\cdot\lambda}$};
		\end{tikzpicture}
	\end{center}\label{fig:sl2insl3}
	\caption{\textbf{The case $\sln_2\subset\sln_3$:} Here  $\Sim=\{s,t\}$, $\Simi=\{s\}$ and $\lambda\in \cartan^*$ denotes some regular integral weight. The shortest coset representatives $\{e,s,ts\}$ of $\Wi\backslash\Weyl$ parameterize the blocks of $\cato{\levi}{}$ which map into $\cato{\g}{\lambda}$.}
\end{figure}
%In the notation of Section \ref{subsec:liealgebrasandrootsystems}, 
Let $\g\supset\para\twoheadrightarrow\levi$ be a reductive complex Lie algebra with parabolic subalgebra and Levi factor. Then the \emph{parabolic induction} functor is given by
	$$\ind \defi \Univ{\g}\otimes_{\Univ{\para}}\res_\levi^\para(-):\cato{\levi}{}\rightarrow \cato{\g}{}.$$
We often drop the $\res_\levi^\para$ from the notation.
Since $\ind$ is exact and $\ind \Ver{\levi}{\mu}=\Ver{\g}{\mu}$ for all $\mu\in\cartan^*$ (see below) it respects the block decomposition of category $\cato{\g}{}$, namely restricts to
	$$\ind:\cato{\levi}{w\cdot\lambda}\rightarrow\cato{\g}{\lambda},$$
for integral $\lambda\in\cartan^*$  which are dominant for $\Phi^+_\g$ and $w\in\W$ a shortest coset representative for $\Wi\backslash\Weyl/\W_{\g,\lambda}$ or in other words
	$$(\ind)^{-1}(\cato{\lambda}{\g})=\bigoplus_{w\in\Wi\backslash\Weyl/\W_{\g,\lambda}} \cato{\levi}{w\cdot\lambda},$$
by Lemma \ref{lem:dominantsweightsgvsl}.
This is visualized in the example $\sln_{3}$ in Figure \ref{fig:sl2insl3}.
We now state some general functorial properties of parabolic induction.
\begin{lemma}\label{lem:actionofcenteroflevionnilradofpara}
The adjoint action of $\zi$ (the center of $\levi$)  on $\Univ{\nili}$, respectively $\Univ{\nili^-}$, is semisimple with finite dimensional weight spaces of positive, respectively negative, weight. Furthermore
$$\Univ{\nili}^{\ad{\zi}}=\Univ{\nili^-}^{\ad{\zi}}=\langle \, 1\,\rangle_\C.$$
\end{lemma}
\begin{proof}
	By the PBW theorem $\Univ{\nili}$ is generated by monomials in $X_\alpha$ for $\alpha\in\Phi^+_\g\backslash\Phi^+_\levi$, and $X_\alpha$ a generator of $\g_\alpha$. Furthermore $\zi$ contains $\varpi_\alpha^\vee$ for $\alpha\in\Phi^+_\g\backslash\Phi^+_\levi$ and
		$$\left[\varpi_\alpha^\vee,X_\beta\right]=\delta_{\alpha,\beta}X_\beta$$
	for $\alpha,\beta \in \Phi^+_\g\backslash\Phi^+_\levi$. The statement follows.
\end{proof}
Let $\lambda\in\cartan^*$ be dominant for $\Phi_\levi^+$. We define the \emph{parabolic restriction functor for category $\mathcal O$} by
$$\ppres{-}: \cato{\g}{\lambda}\rightarrow\cato{\levi}{\lambda}, M\mapsto \pr{\lambda}(M^{\nili}_{\lambda|_{\zi}}),$$
where by definition
$$M^{\nili}_{\lambda|_{\zi}}=\{m\in M\,|\,\nili m=0\text{ and }Zm=\lambda(Z)m\text{ for all }Z\in\zi\},$$
and $\pr{\lambda}:\cato{\levi}{}\rightarrow\cato{\levi}{\lambda}$ is the projection. This is indeed well-defined by the next theorem, where we list important properties of parabolic induction.
 \begin{theorem}\label{thm:indcollection}
	Let $\lambda\in \cartan^*$ be an integral weight which is dominant for $\Phi_\levi^+$. Then the following statements hold.
	\begin{enumerate} 
		\item The functor $\ppres{-}$ is well-defined and
			$$\ind: \cato{\levi}{\lambda}\rightleftarrows\cato{\g}{\lambda}:\ppres{-}$$
		are adjoint.
		\item $\ind$ is exact and $\ppres{-}$ is left exact.
		\item Moreover $\ppres{\ind M}\cong M$ for all $M$ in $\cato{\levi}{\lambda}$.
		\item For all $\mu \in \Wi\cdot\lambda$ we have $$\ind \Ver{\levi}{\mu}=\Ver{\g}{\mu}\text{ and }\ppres{ \Ver{\g}{\mu}}=\Ver{\levi}{\mu}.$$
		\item \label{thm:indcollection:projtoproj} The functor $\ppres{-}$ is exact and $\ind$ sends (indecomposable) projective modules in $\cato{\levi}{\lambda}$ to (indecomposable) projectives in $\cato{\g}{\lambda}$, if and only if $\lambda$ is dominant for $\Phi_\g^+$.
	\end{enumerate}
\end{theorem}
\begin{proof}
	
	(1) 
	Let us first show that $\ppres{-}$ is well-defined. For this we need to show that for $M\in\cato{\g}{\lambda}$, $M^{\nili}_{\lambda|_{\zi}}$ is really in $\cato{\levi}{}$ . Firstly, $M^{\nili}_{\lambda|_{\zi}}\subset M$ is an $\levi$-submodule of $M$, hence clearly $\cartan$ acts semisimply and $\boreli$ acts locally finitely on it. We need to show that $M^{\nili}_{\lambda|_{\zi}}$ is finitely generated as an $\levi$-module. For this we show that already  $N=M_{\lambda|_{\zi}}\supset M^{\nili}_{\lambda|_{\zi}}$ is finitely generated. Choose a finite set $\{x_i\}$ of $\g$-generators of $M$. Without loss of generality, we can assume that each $x_i$ is an highest weight vector and hence $\{x_i\}$ is even a set of $\borel^-$-generators, where $\borel^-$ denotes the opposite Borel. We decompose 
	$$\borel^-=\boreli^-\oplus\nili^-$$
	where and $\nili^-$ and $\boreli^-$ are the opposites of $\nili$ and $\boreli$. Now
	$$\Univ{\nili^-}\{x_i\}\cap N$$
	is finite dimensional, since the $\zi$ weight spaces of $\Univ{\nili^-}$ are finite dimensional (Lemma \ref{lem:actionofcenteroflevionnilradofpara}). By the PBW theorem a basis of this space provides a finite set of $\levi$-generators of $N$. Since $\Univ{\levi}$ is Noetherian, also $M^{\nili}_{\lambda|_{\zi}}\subset N$ will be finitely generated and hence in category $\cato{\levi}{}$. That $\ppres{M}$ is contained in the the block $\cato{\levi}{\lambda}$ follows from (4). Hence the first statement follows.
	Generally, there are natural isomorphisms
	$$\Hom{\g}{\ind(-)}{-}\cong\Hom{\para}{-}{-}\cong\Hom{\levi}{-}{\pres{-}}$$
	of functors on
	$\levi\modules^{\operatorname{opp}}\times\g\modules$
	and hence an adjunction
	$$\ind:\levi\modules\rightleftarrows\g\modules:\pres{-}.$$
	One easily sees that this induces the stated adjunction $(\ind,\ppres{-})$.
	
	(2) $\ind$ is exact by the PBW theorem and $\ppres{-}$ is left exact since it is right adjoint.
	
	(3) See also \cite[Lemma 5.10]{Sartori2015256}. Let $M\in\cato{\levi}{\lambda}$. We want to show
	$$\ppres{\ind(M)}\cong M.$$
	By the PBW theorem as a vector space (and even as a $\zi$-module)
	$$\ind(M)=\Univ{\g}\otimes_{\Univ{\para}} M\cong\Univ{\nili^-}\otimes_\C M.$$
	Lemma \ref{lem:actionofcenteroflevionnilradofpara} shows that $\Univ{\nili^-}^{\ad{\zi}}=\langle\, 1\,\rangle_\C$ and hence
	$$\ppres{\ind(M)}\subseteq 1\otimes M\cong M.$$
	But every vector in $1\otimes M$ is $\nili$-invariant and the inclusion is actually an equality. The second statement follows.
	
	(4) The first statement is trivial and the second follows from the first and (3).
	
	(5) See also \cite[Lemma 5.11]{Sartori2015256}. As right adjoint functor $\ppres{-}$ is certainly left exact as explained in (2). Let $$M\rightarrow N$$ be a surjection in $\cato{\g}{\lambda}$. We have to show that 
	$$\ppres{M}\rightarrow \ppres{N}$$
	is also surjective. So let $n\in \ppres{N}\subset N$ and $m$ be a preimage in $M$. It suffices to show that $m$ is $\nili$-invariant. But this is easy to see: Applying an element of $\nili$ to $m$ increases its weight $\pz{\lambda}$ (Lemma \ref{lem:actionofcenteroflevionnilradofpara}), but since $\lambda$ is also dominant for $\W$, this is already maximal, and hence $m$ is $\nili$-invariant. Hence $\ppres{-}$ is also right exact. Now functors which are left adjoint to exact functors send projectives to projectives and the first implication follows. If on the other hand $\lambda$ is not dominant for $\Phi_\g^+$, then $\Ver{\levi}{\lambda}$ is projective but $\ind \Ver{\levi}{\lambda}=\Ver{\g}{\lambda}$ is not.
	The statement about the indecomposablity follows from (3).
\end{proof}
We now explain how the relative Harish-Chandra morphism 
$$\HC{\g}{\para}:\Zent{\g}\rightarrow\Zent{\levi}$$
between the center of $\Univ{\g}$ and $\Univ{\levi}$ and the parabolic restriction functor
%$$\ppres{-}: \cato{\g}{\lambda}\rightarrow\cato{\levi}{\lambda}, M\mapsto \pr{\lambda}(M^{\nili}_{\lambda|_{\zi}})$$
interact.
The results are a straightforward generalization of the fact that the action of $z\in\Zent{\g}$ on a Verma module is completely determined by the action on the highest weight vector, which is completely described by its image $\HC{\g}{\borel}(z)\in \Sym{\cartan}$ under the Harish-Chandra morphism.
\begin{definition} %[\cite{Ho98}]
Denote by $\nili$ and $\nili^-$ the nilradical of $\para$ and its opposite $\para^-$.
The \emph{relative Harish-Chandra homomorphism}
	$$\HC{\g}{\para}: \Univ{\g} \rightarrow\Univ{\levi}$$
is obtained by the projection on the first factor in the PBW-decomposition 
$$\Univ{\g}=\Univ{\levi}\oplus \nili^-\Univ{\para^-}\oplus\Univ{\g}\nili.$$ It restricts to a homomorphism of algebras 
$$\HC{\g}{\para}:\Zent{\g}\rightarrow\Zent{\levi}.$$
%Let $\g'\supset\para'\supset\qara'$ be a complex reductive Lie algebra with a pair of nested parabolic subalgebras. Denote by $\levi'$ and $\mevi'$ their respective re
\end{definition}
\begin{remark}
	This version of the Harish-Chandra morphism depends not only on the Levi but also on the parabolic subalgebra, hence the decoration $\HC{\g}{\para}$.
\end{remark}
\begin{lemma}\label{lem:hcmorphismandinvariants}
In the above notation let $M$ be a $\g$-module, $u\in\Zent{\g}$ and $m\in M^{\nili}$. Then we have $$um=\HC{\g}{\para}(u)m.$$
\end{lemma}
\begin{proof}
	Let $u=u_1+u_2+u_3$ in the above PBW decomposition.
	Since $$\nili^-\Univ{\para^-}\cap\Univ{\g}^{\ad{\mathfrak{z}_\levi}}=\{0\}$$ it follows that $u_2=0$. Hence $$u-\HC{\g}{\para}(u)\in\Univ{\g}\nili$$ and $(u-\HC{\g}{\para}(u))m=0$ for all $m\in M^{\nili}$. See also \cite{Ho98}.
\end{proof}
\begin{corollary}
	Let $M\in \g\modules$. Then the following diagram commutes:
	\begin{center}
		\begin{tikzcd}
			\Zent{\g}\arrow[d]\arrow[r,"\HC{\g}{\para}"]&\Zent{\levi}\arrow[d]\\
			\End{\g}(M)\arrow[r,"(-)^{\nili}"]&\End{\levi}(M^{\nili})
		\end{tikzcd}
	\end{center}
	where the vertical arrows are the action morphisms.
\end{corollary}
\subsection{Parabolic Induction and Translation Functors}
Let $\lambda$ and $\mu$ be some dominant integral weights for $\Phi^+_\g$ or $\Phi^+_\levi$ and $\nu=\mu-\lambda$. Then the corresponding \emph{translation functors} are given by
\begin{align*}
	\trans{\lambda}{\mu}&\colon\cato{\g}{\lambda}\rightarrow\cato{\g}{\mu},~ M\mapsto \pr{\mu}(M\otimes_C \Simpl{\g}{\Sdom{\nu}})\text{ and}\\
	\trans{\lambda}{\mu}&\colon\cato{\levi}{\lambda}\rightarrow\cato{\levi}{\mu},~ M\mapsto \pr{\mu}(M\otimes_\C \Simpl{\levi}{\Sidom{\nu}}),
\end{align*}
where $\pr{\mu}$ denotes the projection to the corresponding block in $\mathcal O$.

In this section we explain how translation functors and parabolic induction interact. There are two different cases. Either a translation functor maps \emph{into} a more singular block, i.e. $\W_{\g,\lambda}\subset\W_{\g,\mu}$. In this case parabolic induction and the translation functor \emph{commute}. Or the translation functor maps \emph{out of} a more singular block and the situation is more complicated. Most results are a direct generalization of character formulas for translation functors as in \cite[Kapitel 2]{jantzen1979moduln}.
%\subsection{Tensor identity}

The most important tool for this section is the \emph{tensor identity}, which describes in its most general formulation how tensor products and induction for modules over a Hopf algebra and a subalgebra interact. For us, the following formulation suffices.
\begin{lemma}[Tensor identity]
	Let $\mathfrak n\subset \mathfrak{m}$ be finite dimensional complex Lie algebras and $M$ be an $\mathfrak m$-module. Then there is a natural equivalence of functors
		$$\indop_{\mathfrak{n}}^{\mathfrak{m}}\left(-\otimes_\C \res_{\mathfrak{m}}^{\mathfrak{n}} M\right)
		\cong
		\left(\indop_{\mathfrak{n}}^{\mathfrak{m}}-\right)\otimes_\C M:
		\mathfrak{n}\modules
		\rightarrow
		\mathfrak{m}\modules,$$
	such that for $X\in\mathfrak{m}$ and $m\in M$
		$$X\otimes(-\otimes m)\mapsto (X\otimes-)\otimes m+(1\otimes-)\otimes Xm.$$
\end{lemma}
%\begin{proof} There are the following natural isomorphisms of functors
%	\begin{align*}
%		\Hom{\mathfrak m}{	\left(\indop_{\mathfrak{n}}^{\mathfrak{m}}-\right)\otimes_\C M}{-}
%			&\cong\Hom{\mathfrak m}{	\indop_{\mathfrak{n}}^{\mathfrak{m}}(-)}{\Hom{\C}{M}{-}}\\
%			&\cong\Hom{\mathfrak n}{-}{\res_{\mathfrak{m}}^{\mathfrak{n}}\Hom{\C}{M}{-}}\\
%			&\cong\Hom{\mathfrak n}{-}{\Hom{\C}{\res_{\mathfrak{m}}^{\mathfrak{n}}M}{-}}\\
%			&\cong\Hom{\mathfrak n}{-\otimes_\C \res_{\mathfrak{m}}^{\mathfrak{n}}M}{-}\\
%			&\cong\Hom{\mathfrak m}{\indop_{\mathfrak{n}}^{\mathfrak{m}}\left(-\otimes_\C \res_{\mathfrak{m}}^{\mathfrak{n}}M\right)}{-}.
%	\end{align*}
%	In third equality we used that $\mathfrak{n}$ is a subalgebra of $\mathfrak{m}$. Now one shows that the induced isomorphism
%		$$\indop_{\mathfrak{n}}^{\mathfrak{m}}\left(-\otimes_\C \res_{\mathfrak{m}}^{\mathfrak{n}}M\right)
%		\cong
%		\left(\indop_{\mathfrak{n}}^{\mathfrak{m}}-\right)\otimes_\C M$$
%	is indeed given by the stated formula.
%\end{proof}
In our specific case of \emph{parabolic induction} this implies the existence of a filtration on tensor products with induced modules.
\begin{lemma}\label{LemmaTensorIdentity}
	Let $M$ be a $\levi$-module and $E$ a finite dimensional $\g$-module. Denote by $\nu_1,\dots, \nu_n$ the weights of $\zi$ on $E$, ordered in a way that $\nu_i\leq\nu_j$ implies $i\leq j$. Then $\left(\ind M\right)\otimes E$ has a filtration, natural in $M$,
		$$\{0\}=N_{n+1}\subset N_n\subset\dots\subset N_1=\left(\ind M\right)\otimes E$$
	with subquotients $N_i/N_{i+1}\cong \ind\left( M\otimes E_{\nu_i}\right)$, where $\nili$ acts trivially on $E_{\nu_i}$.
\end{lemma}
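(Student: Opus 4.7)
The plan is to exploit the classical tensor identity
\[
\ind M \otimes E \;\cong\; \ind\bigl(M \otimes E\bigr),
\]
where on the right $M \otimes E$ carries the diagonal $\para$-action (and, since $\radpara$ acts trivially on $M$, it operates on $M\otimes E$ through the second tensor factor only). This isomorphism is standard, induced via the coproduct of $\Univ(\g)$ by the formula $u\otimes(m\otimes e)\mapsto u_{(1)}\otimes m\otimes u_{(2)}e$. Granted it, the problem reduces to producing a filtration
\[
E=F_1E\supset F_2E\supset\dots\supset F_{n+1}E=0
\]
of $E$ by $\para$-submodules whose subquotients are isomorphic to $E_{\nu_i}$, viewed as a $\para$-module on which $\radpara$ acts by zero. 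Tensoring with $M$ and applying the exact functor $\ind$ will then yield $N_i:=\ind(M\otimes F_iE)$ as the desired filtration with subquotients $\ind(M\otimes E_{\nu_i})$.

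To build this filtration, observe that $\zi\subset\h$ acts semisimply on the finite-dimensional $\g$-module $E$, giving the $\zi$-weight decomposition $E=\bigoplus_{i}E_{\nu_i}$, and set $F_iE:=\bigoplus_{k\geq i}E_{\nu_k}$. The Levi factor $\levi$ commutes with $\zi$ and hence preserves each weight space. For the nilradical, decompose $\radpara=\bigoplus_{\alpha\in\Phi^+\setminus\Phi^+_I}\g_\alpha$; since $\alpha\notin\Phi^+_I$, the restriction $\alpha|_{\zi}$ is a non-negative combination of the $\beta|_{\zi}$ with $\beta\in\Delta\setminus I$ in which at least one coefficient is strictly positive, so $\alpha|_{\zi}>0$ in the partial order of the hypothesis. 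For $x\in\g_\alpha$ and $v\in E_{\nu_k}$ this gives $xv\in E_{\nu_k+\alpha|_\zi}=E_{\nu_j}$ for some index $j$ with $\nu_j>\nu_k$, and the ordering assumption forces $j>k$. Consequently $\radpara\cdot F_iE\subset F_{i+1}E$, so each $F_iE$ is a $\para$-submodule and the subquotient $F_iE/F_{i+1}E\cong E_{\nu_i}$ acquires a $\para$-module structure on which $\radpara$ acts as zero and $\levi$ acts in its natural way.

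The only step that is not entirely formal is the observation that the $\zi$-weights of $\radpara$ are strictly positive in the partial order used to order the $\nu_i$; this rests on the structural fact that a positive root not lying in $\Phi^+_I$ must involve some simple root outside $I$ with positive coefficient, and such contributions survive restriction to $\zi$. Once this is recorded, the rest is purely formal: the filtration construction, the tensor identity, and exactness of $\ind$ combine directly to deliver the filtration $\{N_i\}$ of $\ind M\otimes E$ with the prescribed subquotients.
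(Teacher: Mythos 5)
Your proof is correct and takes essentially the same route as the paper: the tensor identity, a filtration by the descending sums of $\zi$-weight spaces (your $M\otimes F_iE$ is exactly the paper's $M_i=\sum_{j\geq i}M\otimes E_{\nu_j}$), and exactness of $\ind$. You even spell out the step the paper leaves implicit — that $\radpara$ raises $\zi$-weights — just note that your conclusion ``$\alpha|_{\zi}>0$'' is positivity in the sense of non-negative combinations of the restrictions $\beta|_{\zi}$, $\beta\in\Delta\setminus I$ (the root order), which is the reading of the ordering hypothesis under which both your argument and the paper's go through.
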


\begin{proof}
	The tensor identity yields
		$$\left(\ind M\right)\otimes E\cong\ind\left(M\otimes_\C \res_\g^\para E\right).$$
	Now set $M_i:=\sum_{j=i}^{n} M\otimes E_{\nu_j}$; this is clearly a $\levi$-submodule of $M\otimes E$. 
	Since furthermore the weights $\nu_i$ are ordered in an ascending way, $M_i$ is also stable under $\nili$ and hence a $\para$-submodule. 
	The modules $M_i$ give a filtration of $M\otimes E$ as a $\para$-module with subquotients $$M_i/M_{i+1}\cong M\otimes E_{\nu_i}.$$
	Since non-zero elements of $\nili$ have non-zero weights with respect to $\zi$, they indeed act trivially on $E_{\nu_i}$.
	Let $N_i:=\U(\g)\otimes_{\U(\para)} M_i$. Using the exactness of parabolic induction and the tensor identity we see that the $N_i$ define a filtration with the desired property. That this is indeed natural follows directly from the explicit description of the $M_i$.
\end{proof}
%\subsection{Restriction of finite dimensional modules}

Since translation functors are built from tensor products with finite dimensional modules, we need to understand how they split when restricted to a Levi subalgebra. Although this is generally a hard question, certain \emph{extremal} direct summands are easily identified.
\begin{figure}[h]
	\begin{center}
		\begin{tikzpicture}
		% straight arrows
		\foreach \i in {0, 1, ..., 5} {
			%\draw[thick]  (\i*60:3)--((\i*60+60:3);
			\fill (\i*60:3) circle [radius=2pt];
			%\draw[thick,->] (0, 0) -- (\i*60:3);
		}
		\fill (.1,.1) circle [radius=2pt];
		\fill (-.1,-.1) circle [radius=2pt];
		\node[left](Z) at (-.1, 0) {$0$};
		% arc arrow
		% annotations
		\node[right](T) at (3, 0) {$\alpha$};
		\node[left] (S) at (2*60:3) {$\beta$};
		\node[left] (ST) at (3*60:3) {$-\alpha$};
		\node[left] (STS) at (4*60:3) {$-\rho$};
		\node[right] (TS) at (5*60:3) {$-\beta$};
		\node[right] (E) at (60:3) {$\rho$};
		\draw[thick,<-] (0,-3.5) -- (0,3.5);
		\node[right] (zen) at (0,3.3) {$\zi^*$};
		\draw[dashed,rounded corners=1mm] (4.3,-0.4) rectangle (-4.3,0.4);
		\node[right] (IT) at (4.5,-0.1) {$\Simpl{\levi}{\alpha}$};
		\draw[dashed,rounded corners=1mm] (2.7,2.6-0.4) rectangle (-2.7,2.6+0.4);
		\node[right] (IE) at (2.9,2.6-0.1) {$\Simpl{\levi}{\rho}$};
		\draw[dashed,rounded corners=1mm] (2.7,-2.6-0.4) rectangle (-2.7,-2.6+0.4);
		\node[right] (IE) at (2.9,-2.6-0.1) {$\Simpl{\levi}{-\beta}$};
		\draw[dashed,rounded corners=1mm] (-0.5,-0.7) rectangle (0.5,0.7);
		\node[below] (tr) at (0,-0.7) {$\Simpl{\levi}{0}=\operatorname{triv}$};
		\end{tikzpicture}
	\end{center}\label{fig:sl2insl3adrep}
	\caption{\textbf{Splitting of the adjoint representation in the case $\sln_2\subset\sln_3$:} Here  $\Delta_\g=\{\alpha,\beta\}$, $\Delta_\levi=\{\alpha\}$. 
		Dots indicate the weight spaces and the boxes surround the direct summands of the restriction to $\levi$.}
\end{figure}
\begin{lemma}\label{LemmaBranchingOfSimples}
	Let $\nu\in\cartan^*$ be some integral weight and $\nu'\defi\nu|_{\zi}$.
	Then, as $\levi$-module, $\Simpl{\levi}{\Sidom{\nu}}$ appears with multiplicity one as direct summand of $\Simpl{\g}{\Sdom{\nu}}_{\nu'}$. Recall that $\Sdom{\nu}$ and $\Sidom{\nu}$ denote the unique elements in $\Weyl\nu\cap\Lambda^+_\g$ and $\Wi\nu\cap\Lambda^+_\levi$.
\end{lemma}
\begin{proof}
	Let $w\in \W$ such that $\Sdom{\nu}=w(\nu)$. Write $w=xy$ with $x\in w\Wi$ a shortest coset representative and $y\in \Wi$. 
	%Then one easily proves $y(\nu)=\Sidom{\nu}$ and $xy(\nu)=\Sdom{\nu}$.
	Since $x$ is a shortest coset representative, it maps positive roots for $\levi$ to positive roots for $\g$, and since $\Sdom{\nu}$ is dominant we get
		$$\langle y(\nu),\alpha^\vee\rangle=\langle xy(\nu),x(\alpha)^\vee\rangle=\langle \Sdom{\nu},x(\alpha)^\vee\rangle\geq 0\text{ for all } \alpha \in \Phi_\levi^+.$$
	Therefore
		$$y(\nu)=\Sidom{\nu}\text{ and }xy(\nu)=\Sdom{\nu}.$$
	Now choose some non-zero $v^+\in L(\Sdom{\nu})_{y(\nu)}$. Then $v^+$ is a highest weight vector for $\levi$ since for $\alpha \in \Phi_\levi^+$
		$$\dim_\C \Simpl{\g}{\Sdom{\nu}}_{y(\nu)+\alpha}=\dim_\C \Simpl{\g}{\Sdom{\nu}}_{\Sdom{\nu}+x(\alpha)}=0$$
	because $x(\alpha)\in\Phi^+_\g$ and all weights of $L(\Sdom{\nu})$ are in $\Sdom{\nu}-\Zp\Phi^+_\g$. 
	So indeed $v^+$ generates $\Simpl{\levi}{\Sidom{\nu}}$ as $\levi$-module and $\zi$ acts on it via $\nu'=\nu|_{\zi}=y(\nu)|_{\zi}$. The multiplicity one statement follows from $\dim_\C \Simpl{\g}{\Sdom{\nu}}_{y(\nu)}=1$.
\end{proof}
%\subsubsection{Translation into a more singular block}
%We can now explain how translation functors and parabolic inuction interact.
\begin{theorem}[Translation into a more singular block]\label{thm:intoclosure} 
	Let $\mu$ and $\lambda$ be integral weights which are dominant for $\Phi_\g^+$, such that $\W_{\g,\lambda}\subset\W_{\g,\mu}$. Let $w$ be a shortest  representative of a coset in $\Wi\backslash\Weyl$.
	Let $M\in\cato{\levi}{w\cdot\lambda}$. Then there exists a natural isomorphism
		$$\trans{\lambda}{\mu}\ind M\cong\ind\trans{w\cdot\lambda}{w\cdot\mu}M.$$
\end{theorem}
\begin{proof}
	Firstly, the statement is correct for Verma modules $\Ver{\levi}{xw\cdot\lambda}$, for $x\in\W_{\levi}$, since
	\begin{align*}
	\trans{\lambda}{\mu}\ind \Ver{\levi}{xw\cdot\lambda}&\cong \trans{\lambda}{\mu}\Ver{\g}{xw\cdot\lambda}\cong \Ver{\g}{xw\cdot\mu}\\
	\ind\trans{w\cdot\lambda}{w\cdot\mu} \Ver{\levi}{xw\cdot\lambda}&\cong \ind\Ver{\levi}{xw\cdot\mu}\cong \Ver{\g}{xw\cdot\mu}
	\end{align*}
	by Theorem \ref{thm:indcollection} and \cite[Theorem 7.6]{HumCatO}. By the exactness of the involved functors the statement is hence true on the level of characters. Let $\nu\defi\mu-\lambda$. Then the tensor identity gives a natural isomorphism
	$$\trans{\lambda}{\mu}\ind=\pr{\mu}(\ind(-)\otimes\Simpl{\g}{\Sdom{\nu}})\cong\pr{\mu}(\ind(-\otimes\res_\g^\para\Simpl{\g}{\Sdom{\nu}})).$$
	By Lemma \ref{LemmaTensorIdentity} we see that, for suitable $\nu_i\in\zi^*$, the right hand side has a natural filtration with subquotients
	$$\pr{\mu}(\ind(-\otimes\Simpl{\g}{\Sdom{\nu}}_{\nu_i})).$$
	We will show that this functor is zero except in the case
		$$\nu_i=\pz{w(\nu)}.$$
	Again by exactness, we can test this on Verma modules $\Ver{\levi}{xw\cdot\lambda}$, for $x\in \Wi$.
	In this case 
		$$\ind (\Ver{\levi}{xw\cdot\lambda}\otimes\Simpl{\g}{\Sdom{\nu}})$$
	has a Verma flag with subquotients of the form
		$$\Ver{\g}{xw\cdot\lambda+\xi}$$ %=M(xw\cdot\mu)$$
	for weights $\xi$ of $\Simpl{\g}{\Sdom{\nu}}$.
	By \cite[Satz 2.10]{jantzen1979moduln} or \cite[Lemma 7.5 and Theorem 7.6]{HumCatO} and using the hypothesis $\W_{\g,\lambda}\subset\W_{\g,\mu}$, the only Verma module of this form which is contained in the block $\cato{\g}{\mu}$ is 
	$$\Ver{\g}{xw\cdot\lambda+\xi}=\Ver{\g}{xw\cdot\mu}.$$
	To not be killed by $\pr{\mu}$ hence $\xi$ has to be
	\begin{align*}
		\xi&=xw\cdot\mu-xw\cdot\lambda=xw(\nu) ~~~\text{    and therefore}\\
		\pz{\xi}&=\pz{(xw\cdot\mu-xw\cdot\lambda)}=\pz{xw(\nu)}=\pz{w(\nu)}.
	\end{align*}
	We hence have a natural isomorphism
	$$\pr{\mu}(\ind(-\otimes\res_\g^\para\Simpl{\g}{\Sdom{\nu}}))\cong\pr{\mu}(\ind(-\otimes\Simpl{\g}{\Sdom{\nu}}_{\pz{w(\nu)}})).$$
	Lemma \ref{LemmaBranchingOfSimples} now ensures that, as $\levi$-module, $\Simpl{\levi}{\Sidom{w(\nu)}}$ appears as a direct summand of $\Simpl{\g}{\Sdom{\nu}}_{\pz{w(\nu)}}$. This induces inclusions
		\begin{align*}
		\trans{\lambda}{\mu}\ind&\cong\pr{\mu}(\ind(-\otimes\Simpl{\g}{\Sdom{\nu}}_{\pz{w(\nu)}}))\\
		&\supset\pr{\mu}\ind\left( - \otimes \Simpl{\levi}{\Sidom{w(\nu)}}\right)
		\\
		&\supset\pr{\mu}\ind\pr{w\cdot\mu}\left( - \otimes \Simpl{\levi}{\Sidom{w(\nu)}}\right)\\
		&=\pr{\mu}\ind\trans{w\cdot\lambda}{w\cdot\mu}(-)\\
		&\subset\ind\trans{w\cdot\lambda}{w\cdot\mu}(-)
		\end{align*}
	The inclusions are equalities for Verma modules, and the statement follows by the exactness of all involved functors.
\end{proof}
\begin{theorem}[Translation out of a more singular block]\label{thm:outofclosure}
	Let $\mu$ and $\lambda$ be integral weights, dominant for $\Phi_\levi^+$, with $\W_{\g,\mu}\subset\W_{\g,\lambda}$ and $z^{-1}\in\Wi\backslash\Weyl$ a shortest coset representative such that both $z\cdot\lambda$ and $z\cdot\mu$ are dominant for $\Phi_\g^+$.
	Let $M\in\cato{\levi}{\lambda}$. Then
	$$\trans{z\cdot\lambda}{z\cdot\mu}\ind M$$ has a filtration, natural in $M$,  whose successive quotients are
	$$\ind\trans{\lambda}{w\cdot\mu}M$$
	parametrized by shortest representatives $w$ (with respect to $z\Sim z^{-1}$, see the following Remark \ref{rem:allgroupsareparabolic}) of the double cosets $$\W_{\levi,\lambda}\backslash\W_{\g,\lambda}/\W_{\g,\mu}$$ and
	ordered by the length of $w$. So in particular, $\ind\trans{\lambda}{\mu}M$ is a submodule and $\ind\trans{\lambda}{\tilde{w}\cdot\mu}M$ a quotient of $\trans{z\cdot\lambda}{z\cdot\mu}\ind M$, for $\tilde{w}$ the shortest representative of the longest word in $\W_{\g,\lambda}$.
\end{theorem}
\begin{proof}
	We proceed as in the proof of Theorem \ref{thm:intoclosure}. Let $x\in\Wi$. Then by \cite[Satz 2.17]{jantzen1979moduln} or \cite[Theorem 7.12]{HumCatO} we have the following equalities of characters:
	
	On the one hand
	\begin{align*}
	\character(\trans{z\cdot\lambda}{z\cdot\mu}\ind\Ver{\levi}{x\cdot\lambda})=&\character(\trans{z\cdot\lambda}{z\cdot\mu}\Ver{\g}{x\cdot\lambda})\\
	=&\,\,\sum_{\mathclap{w\in \W_{\g,\lambda}/\W_{\g,\mu}}}\,\,\character \Ver{\g}{xw\cdot\mu}
	\intertext{and on the other hand}
	\character(\trans{\lambda}{w\cdot\mu}\Ver{\levi}{x\cdot\lambda})
	=&\,\,\sum_{\mathclap{y\in \W_{\levi,\lambda}/\W_{\levi,w\cdot\mu}}}\,\,\character \Ver{\levi}{xyw\cdot\mu}
	\intertext{and hence}
	\sum_{\mathclap{w\in \W_{\levi,\lambda}\backslash\W_{\g,\lambda}/\W_{\g,\mu}}}\,\,\character(\ind\trans{\lambda}{w\cdot\mu}\Ver{\levi}{x\cdot\lambda})
	=&\,\,\sum_{\mathclap{\substack{w\in \W_{\levi,\lambda}\backslash\W_{\g,\lambda}/\W_{\g,\mu} \\ y\in \W_{\levi,\lambda}/\W_{\levi,w\cdot\mu}}}}\,\,\character \Ver{\g}{xyw\cdot\mu}\\
	=& \,\,\sum_{\mathclap{w\in \W_{\g,\lambda}/\W_{\g,\mu}}}\,\,\character \Ver{\g}{xw\cdot\mu}.
	\end{align*}
	In the last equality we used that the stabilizer in $\W_{\levi,\lambda}$ of a coset $w\W_{\g,\mu}\in\W_{\g,\lambda}/\W_{\g,\mu}$ is exactly 
	$w\W_{\g,\mu} w^{-1}\cap \W_{\levi,\lambda}=\W_{\levi,w\cdot\mu}$.
	
	Putting everything together, we obtain
		$$\character(\trans{z\cdot\lambda}{z\cdot\mu}\ind\Ver{\levi}{x.\lambda})=\sum_{\mathclap{w\in \W_{\levi,\lambda}\backslash\W_{\g,\lambda}/\W_{\g,\mu}}}\,\,\character(\ind\trans{\lambda}{w\cdot\mu}\Ver{\levi}{x.\lambda}).$$
	 By the exactness of all involved functors, this shows that our theorem is at least true on the level of characters. 
	
	Now we have to take a more refined look. Let $\nu\defi\mu-\lambda$. Then the tensor identity gives a natural isomorphism
	$$\trans{z\cdot\lambda}{z\cdot\mu}\ind=\pr{z\cdot\mu}(\ind(-)\otimes\Simpl{\g}{\Sdom{\nu}})\cong\pr{z\cdot\mu}(\ind(-\otimes\res_\g^\para\Simpl{\g}{\Sdom{\nu}})).$$
	By Lemma \ref{LemmaTensorIdentity} we see that, for suitable $\nu_i\in\zi^*$, the right hand side has a natural filtration with subquotients
	$$\pr{z\cdot\mu}(\ind(-\otimes\Simpl{\g}{\Sdom{\nu}}_{\nu_i})).$$ 
	Let us analyse which of them are non-zero. By exactness, this can be tested on Verma modules $\Ver{\levi}{x\cdot\lambda}$, for $x\in \Wi$. 
	In this case 
	$$\ind (\Ver{\levi}{x\cdot\lambda}\otimes\Simpl{\g}{\Sdom{\nu}})$$
	has a Verma flag with subquotients of the form
	$$\Ver{\g}{x\cdot\lambda+\xi}$$
	for weights $\xi$ of $\Simpl{\g}{\Sdom{\nu}}$.
	The only Verma modules of this form which are contained in the block $\cato{\g}{\mu}$ are of the form
	$$\Ver{\g}{x\cdot\lambda+\xi}=\Ver{\g}{xw\cdot\mu}$$
	for $w\in \W_{\g,\lambda}/\W_{\g,\mu}$.
	Hence
	\begin{align*}
	\xi&=xw\cdot\mu-x\cdot\lambda=x(w\cdot\mu-\lambda) ~~~\text{    and therefore}\\
	\pz{\xi}&=\pz{(xw\cdot\mu-x\cdot\lambda)}=\pz{x(w\cdot\mu-\lambda)}=\pz{(w\cdot\mu-\lambda)}.
	\end{align*}
	Notice that the last term does not depend on $x$. 
	By the above, 
	\begin{align*}
\left(\pr{z\cdot\mu}(\ind(-\otimes\Simpl{\g}{\Sdom{\nu}}_{\nu_i}))\neq0\right) \Rightarrow &\\ (\nu_i=\pz{(w\cdot\mu-\lambda)}&\text{ for some $w\in \W_{\g,\lambda}/\W_{\g,\mu}$}).
	\end{align*}
	Choose such $i$ and $w$. Without loss of generality we can assume that $w$ is a shortest representative of a double coset in $\W_{\levi,\lambda}\backslash\W_{\g,\lambda}/\W_{\g,\mu}$, since for $\hat{w}\in\W_{\levi,\lambda}$  
	$$\pz{(\hat{w}w\cdot\mu-\lambda)}=\pz{(\hat{w}w\cdot\mu-\hat{w}\lambda)}=
	\pz{\hat{w}(w\cdot\mu-\lambda)}=\pz{(w\cdot\mu-\lambda)}.$$
	Now \cite[Satz 2.9]{jantzen1979moduln} implies that
	$$w\cdot\mu-\lambda\in \W_{\g,\lambda}\nu$$ 
	and by Lemma \ref{LemmaBranchingOfSimples}, as $\levi$-module, $\Simpl{\levi}{\Sidom{w\cdot\mu-\lambda}}$ appears as a direct summand of $\Simpl{\g}{\Sdom{\nu}}_{\nu_i}$. We hence have a natural inclusion
	\begin{align*}
	\pr{z\cdot\mu}(\ind(-\otimes\Simpl{\g}{\Sdom{\nu}}_{\nu_i}))&\supset\pr{z\cdot\mu}(\ind(-\otimes\Simpl{\levi}{\Sidom{w\cdot\mu-\lambda}}))\\
	&\supset\pr{z\cdot\mu}(\ind\pr{w\cdot\mu}(-\otimes\Simpl{\levi}{\Sidom{w\cdot\mu-\lambda}}))\\
	&\supset\pr{z\cdot\mu}(\ind\trans{\lambda}{w\cdot\mu}(-))\\
	&=\ind\trans{\lambda}{w\cdot\mu}(-)
	\end{align*}
	Again, by the character computation in the beginning, these are all actually equalities and the statement follows.
\end{proof}
\begin{remark}\label{rem:allgroupsareparabolic}
	In the notation of the preceding Theorem \ref{thm:outofclosure} it makes sense to speak about shortest coset representatives with respect to $z\mathcal{S}_\g z^{-1}$ in the double quotient
	$\W_{\levi,\lambda}\backslash\W_{\g,\lambda}/\W_{\g,\mu}$, since all involved groups are generated by their respective intersection with $z\mathcal{S}_\g z^{-1}$. By \cite[Theorem 1.12 (c)]{HumCox} this holds for $\W_{\g,\lambda}$ and $\W_{\g,\mu}$ since $\lambda$ and $\mu$ are dominant with respect to $z^{-1}\mathcal{S}_\g z$. Since $z^{-1}$ is a shortest coset representative in $\Wi\backslash\Weyl$ one can easily see that $z^{-1}\cdot\lambda$ is dominant for $\Simi$ and hence again by \cite[Theorem 1.12 (c)]{HumCox} $\W_{\levi,z^{-1}\lambda}=z^{-1}\W_{\levi,\lambda}z$ is generated by its intersection with $\Simi$. But this just means that $\W_{\levi,\lambda}$ is generated by its intersection with  $z\Simi z^{-1}\subset z\Sim z^{-1}$
\end{remark}
\begin{example}
	(1) In the case $\lambda=-\rho$, $\levi=\cartan$, this recovers the well-known fact that the antidominant projective $$\ProC{\g}{w_0\cdot\mu}=\trans{-\rho}{\mu}\Ver{\g}{-\rho}$$
	has a Verma flag with quotients $\Ver{\g}{w\cdot\mu}$ of multiplicity one, where $w\in\Weyl$ is a shortest coset representative in $\Weyl/\W_{\g,\mu}$.
	
	(2) The case $\sln_2\subset\sln_3=\g$: Denote by $\{s,t\}$ the simple reflections in $\Weyl$ and let $\levi$ be the Levi subalgebra with $\Wi=\{1,s\}$. Set furthermore $\lambda=-\varpi_{\alpha_s}$ such that $\W_{\g,\lambda}=\{1,t\}$. We are interested in the interaction of $\ind$ and $\trans{\lambda}{0}$. There are two different cases (A) for modules in $\cato{\levi}{\lambda}$ and (B) for modules in $\cato{\levi}{ts\cdot\lambda}$. 
	In the illustration we indicated the effect of  $\trans{\lambda}{0}$ on Verma modules $\Ver{\g}{w\cdot\lambda}$ by dotted lines and labeled the dominant weights for $\levi$ with bold case letters. 
	\begin{center}
		\begin{tikzpicture}
		% straight arrows
		\foreach \i in {0, 1, ..., 5} {
			\draw[thin]  (\i*60+30:3.2)--((\i*60+30+180:3.2);
			%\draw[thick,->] (0, 0) -- (\i*60:3);
			\fill  (\i*60:3) circle [radius=2pt];
			\fill  (\i*120+30:1.73) circle [radius=2pt];
		}
		%\fill  (0,0) circle [radius=2pt];
		% arc arrow
		% annotations
		\node[right] (EL) at (30:1.73)  {$\mathbf{e\cdot \lambda}$};
		\node[right] (SL) at (120+30:1.73)  {$s\cdot \lambda$};
		\node[right] (SL) at (240+30:1.73)  {$\mathbf{ts\cdot \lambda}$};
		\node[right](T) at (3, 0) {$\mathbf{t}\cdot 0$};
		\node[left] (S) at (2*60:3) {$s\cdot 0$};
		\node[left] (ST) at (3*60:3) {$st\cdot 0$};
		\node[left] (STS) at (4*60:3) {$sts\cdot 0$};
		\node[right] (TS) at (5*60:3) {$\mathbf{ts}\cdot 0$};
		\node[right] (E) at (60:3) {$\mathbf{e}\cdot 0$};
		%\draw[thick] (0,-3.3) -- (0,3.3);
			\draw[dashed,rounded corners=1mm] (4.3,-0.4) rectangle (-4.3,3);
			\draw[dashed,rounded corners=1mm] (-3,-1.3) rectangle (3,-3);
			\node[right] (A) at (4.3,1.5) {(A)};
			\node[right] (B) at (3,-2.2) {(B)};
%		\node[right] (IT) at (5,-0.1) {$\cato{\levi}{t\cdot0}$};
%		\node[right] (ITL) at (3.5,.8) {$\cato{\levi}{e\cdot\lambda}$};
%		\node[right] (ITL) at (3.5,-1.9) {$\cato{\levi}{e\cdot\lambda}$};
%		%	\draw[dashed,rounded corners=1mm] (2.7,2.6-0.4) rectangle (-2.7,2.6+0.4);
%		\node[right] (IE) at (5,2.6-0.1) {$\cato{\levi}{0}$};
%		%	\draw[dashed,rounded corners=1mm] (2.7,-2.6-0.4) rectangle (-2.7,-2.6+0.4);
%		\node[right] (IE) at (5,-2.6-0.1) {$\cato{\levi}{ts\cdot0}$};
		\draw[thick,dotted,->] (30:1.73) -- (60:3);
		\draw[thick,dotted,->] (30:1.73) -- (0:3);
		
		\draw[thick,dotted,->] (120+30:1.73) -- (2*60:3);
		\draw[thick,dotted,->] (120+30:1.73) -- (3*60:3);
		
		\draw[thick,dotted,->] (240+30:1.73) -- (4*60:3);
		\draw[thick,dotted,->] (240+30:1.73) -- (5*60:3);
		\end{tikzpicture}
	\end{center}
	(A) For modules in $M\in\cato{\levi}{\lambda}$ parabolic induction and translation out of the wall do \emph{not commute}, since $\W_{\levi,\lambda}\neq\W_{\g,\lambda}$. We rather get a short exact sequence 	
	\begin{center}
		\ses{\ind\trans{\lambda}{0}M}{\trans{\lambda}{0}\ind M}{\ind\trans{\lambda}{t\cdot 0}M}
	\end{center}
	(B) For modules in $M\in\cato{\levi}{ts\cdot\lambda}$ parabolic induction and translation out of the wall do \emph{commute}, since $\W_{\levi,ts\cdot\lambda}=\W_{\g,ts\cdot\lambda}$:
	$$\trans{\lambda}{0}\ind M=\ind\trans{ts\cdot\lambda}{ts\cdot 0}M.$$
\end{example}
The case (B) from the preceding example can be generalized to the following statement.
\begin{corollary}\label{cor:outofclosurecommutecase}
	In the notation of Theorem \ref{thm:outofclosure} assume additionally that $\W_{\levi,\lambda}=\W_{\g,\lambda}$. Let $M\in\cato{\levi}{\lambda}$. Then there is a natural equivalence
		$$\trans{z\cdot\lambda}{z\cdot\mu}\ind M\cong\ind\trans{\lambda}{\mu}M.$$
\end{corollary}
%\subsection{Wall crossing functors}
Composing translation functors into and out of a \emph{wall}, i.e. a block of category $\mathcal O_\lambda$ with $|\W_\lambda|=2$, yield so called \emph{wall crossing functors}, whose interaction with parabolic induction is described in the following. This will be an essential ingredient in the induction step of our proof that parabolic induction and geometric parabolic induction correspond to each other.
\begin{theorem}[Parabolic induction and wall crossing functors]\label{thm:wallcrossing}
	Let $\lambda \in \cartan^*$ be an integral  \emph{regular} weights which is dominant for $\Phi_\g^+$. Let $w\in \Wi\backslash\Weyl$ a shortest coset representative and $s\in \Weyl$ a simple reflection with $ws>w$ such that $ws$ is also a shortest coset representative for $\Wi\backslash\W$.
	Denote by $\theta_s$ a wall-crossing functor through the $s$-wall. 
	Namely, choose some dominant weight $\mu$ with stabilizer $\W_{\g,\mu}=\{1,s\}$ and put $\theta_s=\trans{\mu}{\lambda}\trans{\lambda}{\mu}$.
	Then for all $M\in \cato{\levi}{w\cdot\lambda}$ with a Verma flag there is a short exact sequence, natural in $M$,
	\begin{center}
		\ses{\ind M}{\theta_s\ind M}{\ind\trans{w\cdot\lambda}{ws\cdot\lambda}M}
	\end{center}
	where the first morphism is the unit of the adjunction between $\trans{\mu}{\lambda}$ and $\trans{\lambda}{\mu}$.
\end{theorem}
\begin{proof}
	By Theorem, \ref{thm:intoclosure} we have
		$$\theta_s\ind M\defi\trans{\mu}{\lambda}\trans{\lambda}{\mu}\ind M\cong\trans{\mu}{\lambda}\ind\trans{w\cdot\lambda}{w\cdot\mu} M.$$
		By Theorem \ref{thm:outofclosure}, there is a short exact sequence
		\begin{center}
			\ses{\ind\trans{w\cdot\mu}{w\cdot\lambda}\trans{w\cdot\lambda}{w\cdot\mu} M}{\trans{\mu}{\lambda}\ind\trans{w\cdot\lambda}{w\cdot\mu} M}{\ind\trans{w\cdot\mu}{ws\cdot\lambda}\trans{w\cdot\lambda}{w\cdot\mu} M}
		\end{center}
		Now $w\cdot\mu$ is also regular with respect to for $\W_\levi$: We have
			$$\W_{\levi,w\cdot\mu}=w\W_{\g,\mu}w^{-1} \cap\Wi=\{\id,wsw^{-1}\}\cap\Wi=\{\id\}$$
		since $wsw^{-1}\in \Wi$ would imply that $ws$ and $w$ are in the same coset in $\Wi\backslash\W$ which is a contradiction to the assumption that both are shortest representatives and $ws>w$.

		Hence we have $$\trans{w\cdot\mu}{w\cdot\lambda}\trans{w\cdot\lambda}{w\cdot\mu}\cong\id\text{ and } \trans{w\cdot\mu}{ws\cdot\lambda}\trans{w\cdot\lambda}{w\cdot\mu}\cong\trans{w\cdot\lambda}{ws\cdot\lambda}.$$
		
		That we can indeed choose the first morphism in the short exact sequence as the unit of the adjunction, say $\kappa_s$, follows as in \cite[Theorem 12.2(b)]{Hum} (be aware that his notation is different, since he parametrizes blocks and translation/wall crossing functors by antidominant weights, hence everything is conjugated/multiplied by the longest element $w_0$). By induction on the Verma flag of $M$ we see that the adjunction morphism is indeed injective, and then we use that $\ind M$ is unique as submodule of $\theta_s \ind M$. 
		Let us spell this out in more detail. Let 
		$$0=M_0\subset M_1\subset\cdots \subset M_n=M$$
		be a filtration of $M$ such that the successive quotients are Verma modules. If $n=0$ the statement is trivial.
		Else, we have the following diagram of short exact sequences
		\begin{center}
			\begin{tikzcd}[column sep=0.6cm]
				0\arrow[r]& \theta_s\ind M_{n-1}\arrow[r]&\theta_s \ind  M_{n}\arrow[r]&\theta_s \ind\Ver{\levi}{xw\cdot\lambda}\arrow[r] & 0\\
				0\arrow[r]& \ind M_{n-1}\arrow[r]\arrow[u,"\kappa_s"]&\ind M_{n}\arrow[r]\arrow[u,"\kappa_s"]&\ind\Ver{\levi}{xw\cdot\lambda}\arrow[u,"\kappa_s"]\arrow[r] & 0
			\end{tikzcd}
		\end{center}
		for some $x\in \Wi$. We can assume that the left vertical arrow is injective by induction.
		Now 
		\begin{align*}
		\Hom{\g}{\ind\Ver{\levi}{xw\cdot\lambda}}{\theta_s \ind\Ver{\levi}{xw\cdot\lambda}}&=\\
		\Hom{\g}{\trans{\lambda}{\mu}\Ver{\g}{xw\cdot\lambda}}{\trans{\lambda}{\mu}\Ver{\g}{xw\cdot\lambda}}&=\\
		\Hom{\g}{\Ver{\g}{xw\cdot\mu}}{\Ver{\g}{xw\cdot\mu}}&=\C
		\end{align*}
		Hence $\kappa_s$ is (up to scalar) the unique non-zero morphism $\ind\Ver{\levi}{xw\cdot\lambda}\rightarrow\theta_s \ind\Ver{\levi}{xw\cdot\lambda}$. Since we also know that $\ind\Ver{\levi}{xw\cdot\lambda}$ appears as (even unique) submodule in $\theta_s \ind\Ver{\levi}{xw\cdot\lambda}$, $\kappa_s$ has to be injective.
		Hence also the right vertical arrow of our diagram is injective and 
		we get that 
		$$\kappa_s: \ind  M\rightarrow\theta_s \ind  M$$
		is injective. That $\ind  M$ is indeed unique as a submodule of $\theta_s \ind  M$ can also be seen by an inductive argument. Let $\Ver{\levi}{xw\cdot\lambda}\subseteq M$ such that no weight in $M$ is bigger than $xw\cdot\lambda$. The assumption $ws>w$ guarantees that also in $\theta_s\ind M$ no weight bigger than $xw\cdot\lambda$. Hence we have $$\Univ{\g}(\theta_s\ind M)_{xw\cdot\lambda}=\Ver{\g}{xw\cdot\lambda}^{\oplus(M:\Ver{\levi}{xw\cdot\lambda})}\subseteq\theta_s\ind M.$$
		This is clearly the unique submodule of this form. Now we can pass to the quotient and apply the same argument again. The statement follows by induction.
\end{proof}
\begin{corollary}\label{cor:wallcrossing}
	There is a natural equivalence of functors
	$$\ind\trans{w\cdot\lambda}{ws\cdot\lambda}\cong\coker(\ind\rightarrow\theta_s\ind): \Proje\cato{\levi}{w\cdot\lambda}\rightarrow\Proje\cato{\g}{\lambda}.$$
\end{corollary}
\begin{remark}
	The functor $\coker(\id\rightarrow\theta_s)$ is also known as \emph{shuffling functor}. In general, it maps Verma modules to so called shuffled or twisted Verma modules, i.e. modules which have the same character as a Verma module, but a different (shuffled) composition series. See for example \cite{irving1993shuffled}, \cite{andersen2003twisted} and \cite[Chapter 12.1]{HumCatO}. Since we only apply the functor in the particular situation $ws>w$, no shuffling occurs.
\end{remark}
\begin{corollary}
	Let $\lambda\in\cartan^*$ be an integral \emph{regular} weights which is dominant for $\Phi_\g^+$ and $s\in\Wi$ a simple reflection. Let $M\in\cato{\levi}{\lambda}$. Then there is a natural isomorphism
	$$\theta_s\ind M=\ind \theta_s M.$$
\end{corollary}
\begin{proof}
	Directly follows from Theorem \ref{thm:intoclosure} and Corollary \ref{cor:outofclosurecommutecase}.
\end{proof}
%\subsection{From the singular case to the regular case}
At least up to taking it to some $n$-fold direct sum, parabolic induction for singular blocks of category $\mathcal O$ can be expressed in terms of parabolic induction for regular blocks, by translating out, then inducing, and translating into the singular block again. 
\begin{theorem}\label{thm:fromregtosingind}\sloppy
	Let $\lambda,\mu$ be integral weights which are dominant for $\Phi_\g^+$ where $\mu$ is furthermore regular. Let $w$ be a shortest  representative of a coset in $\Wi\backslash\Weyl/\W_{\g,\lambda}$ and $n=|\W_{\levi,\lambda}|$.
	Then there is a natural equivalence of functors
		$$\trans{\mu}{\lambda}\ind\trans{w\cdot\lambda}{w\cdot\mu}\cong(\ind)^{\oplus n}:\cato{\levi}{w\cdot\lambda}\rightarrow\cato{\g}{\lambda}.$$
\end{theorem}
\begin{proof}
	By Theorem \ref{thm:intoclosure} we have $$\trans{\mu}{\lambda}\ind\trans{w\cdot\lambda}{w\cdot\mu}\cong\ind\trans{w\cdot\mu}{w\cdot\lambda}\trans{w\cdot\lambda}{w\cdot\mu} \cong\ind\id^{\oplus n},$$
	where the last isomorphism follows by using $$\trans{w\cdot\mu}{w\cdot\lambda}\trans{w\cdot\lambda}{w\cdot\mu}\Ver{\levi}{w\cdot\lambda}=\Ver{\levi}{w\cdot\lambda}^{\oplus n}$$
	and the classification of projective functors from \cite{BGproj}, i.e. that projective functors (and natural transformations between them) are completely determined by their effect on a dominant Verma module (in fact any Verma module).
\end{proof}
\subsection{Parabolic Induction and Soergel Modules}
%\subsubsection{Soergel Modules}
In \cite{Soe90} Soergel gives a completely combinatorial description of the bounded derived category of a block of category $\mathcal O$ in terms of the bounded homotopy category of Soergel modules over the endomorphism ring of its antidominant projective module. In this section we aim to give a description of parabolic induction on the level of Soergel modules, i.e. fill out the question mark in the diagram
\begin{center}
	\begin{tikzcd}
		\Hotb(\CoInv{\levi}{w\cdot\lambda}\Smodules)\arrow[r,"?"]&\Hotb(\CoInv{\g}{\lambda}\Smodules)\\
		\Derb(\cato{\levi}{w\cdot\lambda})\arrow[r,"\ind"]\arrow[u,"\wr"]&\Derb(\cato{\g}{\lambda})\arrow[u,"\wr"].
	\end{tikzcd}
\end{center}
%\subsection{Soergel's functor $\mathbb{V}$}
Let $\lambda\in\cartan^*$ be an integral weight which is dominant for $\Phi^+_\g$ and $w$ a shortest representative in $\Wi\backslash\Weyl/\W_{\g,\lambda}$. Denote by
$$\aP{\g}{\lambda}\text{ and }\aP{\levi}{w\cdot\lambda}$$
the antidominant (self-dual) projective in $\cato{\g}{\lambda}$, respectively $\cato{\levi}{w\cdot\lambda}$, and by 
$$\CoInv{\g}{\lambda}=\End{\g}(\aP{\g}{\lambda})\text{ and }\CoInv{\levi}{w\cdot\lambda}=\End{\levi}(\aP{\levi}{w\cdot\lambda})$$
their endomorphism rings. Then \emph{Soergel's functor $\mathbb{V}$} (see \cite{Soe90}) is defined by 
\begin{align*}
	\V{\g}{\lambda}\defi\Hom{\g}{\aP{\g}{\lambda}}{-}&: \cato{\g}{\lambda}\rightarrow \operatorname{mod-}\CoInv{\g}{\lambda}=\CoInv{\g}{\lambda}\modules\text{ and}\\
	\V{\levi}{w\cdot\lambda}\defi\Hom{\levi}{\aP{\levi}{w\cdot\lambda}}{-}&: \cato{\levi}{w\cdot\lambda}\rightarrow \operatorname{mod-}\CoInv{\levi}{w\cdot\lambda}=\CoInv{\levi}{w\cdot\lambda}\modules.
\end{align*}
\begin{theorem}[Struktursatz \cite{Soe90}] Soergel's functor $\mathbb{V}$ is fully faithful on projective modules.
\end{theorem}
\begin{definition}
	The modules in the essential image of the restriction of $\mathbb{V}$ to projective modules are called Soergel modules, so that $\mathbb{V}$ induces an equivalence of categories:
	\begin{align*}
			\V{\g}{\lambda}&:\Proje\cato{\g}{\lambda}\stackrel{\sim}{\rightarrow}\CoInv{\g}{\lambda}\Smodules\\
			\V{\levi}{w\cdot\lambda}&:\Proje\cato{\levi}{w\cdot\lambda}\stackrel{\sim}{\rightarrow}\CoInv{\levi}{w\cdot\lambda}\Smodules
	\end{align*}
	between projectives in $\mathcal O$ and the category of Soergel modules over $\CoInv{\g}{\lambda}$, respectively $\CoInv{\levi}{w\cdot\lambda}$,  denoted by $\CoInv{\g}{\lambda}\Smodules$, respectively by $\CoInv{\levi}{w\cdot\lambda}\Smodules$.
\end{definition}
\begin{remark}
	Abbreviate $C=\CoInv{\g}{\lambda}$. Then the category $C\Smodules$ is generated by modules of the form
	$$C\otimes_{C^{s_n}}\dots C\otimes_{C^{s_1}}\C$$
	for simple reflections $s_i$ 
	%(see Theorem \ref{sec:endosatz} for the action of $\Weyl$ on $C$)
	, with respect to finite direct sums, taking direct summands and isomorphism. This corresponds to the fact that for regular $\lambda$ all projectives in $\cato{\g}{\lambda}$ appear as direct summands in the projective modules
	$$\theta_{s_n}\cdots\theta_{s_1}\Ver{\g}{\lambda}$$
	and the following Lemma.
\end{remark}
\begin{lemma}[\cite{Soe90} Theorem 10]
	Let $\lambda,\mu\in\cartan^*$ be dominant integral weights such that $\W_{\g,\lambda}\subseteq\W_{\g,\mu}$. Then there are natural isomorphisms of functors
	\begin{align*}
		\V{\g}{\lambda}\trans{\mu}{\lambda}&\cong \CoInv{\g}{\lambda}\otimes_{\CoInv{\g}{\mu}}\V{\g}{\mu}\text{ and}\\
		\V{\g}{\mu}\trans{\lambda}{\mu}&\cong \res_{\CoInv{\g}{\lambda}}^{\CoInv{\g}{\mu}}\V{\g}{\lambda}.
	\end{align*}
\end{lemma}
\begin{corollary}
	There are equivalences of categories
	\begin{align*}
		\Derb(\cato{\g}{\lambda})&\stackrel{\sim}{\leftarrow}\Hotb(\Proje\cato{\g}{\lambda})\stackrel{\sim}{\rightarrow}\Hotb(\CoInv{\g}{\lambda}\Smodules),\\
		\Derb(\cato{\levi}{w\cdot\lambda})&\stackrel{\sim}{\leftarrow}\Hotb(\Proje\cato{\levi}{w\cdot\lambda})\stackrel{\sim}{\rightarrow}\Hotb(\CoInv{\levi}{w\cdot\lambda}\Smodules).
	\end{align*}
\end{corollary}
%\subsubsection{Harish-Chandra morphism and parabolic restriction}\label{sec:hcmorph}
We will need this statement about antidominant projectives and parabolic restriction.
\begin{lemma}\label{lem:pararesantidomproj} Let $\lambda \in \cartan^*$ be an  integral weight which is dominant for $\Phi^+_\g$. Then
	$$\aP{\levi}{\lambda}\cong\ppres{\aP{\g}{\lambda}}.$$
%	Furthermore the adjunction morphism 
%	$$\ind\aP{\levi}{\lambda}=\ind\pr{\lambda}(\pres{\aP{\g}{\lambda}})\rightarrow\aP{\g}{\lambda}$$ 
%	is injective.
\end{lemma}
\begin{proof}
	%	We know that 
	%	\begin{align*}
	%		\aP{\g}{\lambda}&=\pr{\lambda}(\Ver{\g}{-\rho}\otimes\Simpl{\g}{\rho})\text{ and}\\
	%		\aP{\levi}{\lambda}&=\pr{\lambda}(\Ver{\levi}{-\rho}\otimes\Simpl{\levi}{\rho}).
	%	\end{align*}
	%	We first proof that
	%		$$(\Ver{\g}{-\rho}\otimes\Simpl{\g}{\rho})^{\nili}=\Ver{\g}{-\rho}^{\nili}\otimes\Simpl{\g}{\rho}^{\nili}.$$
	%	For this take an element
	%		$$m=\sum_i u_i\otimes x_i\in (\Ver{\g}{-\rho}\otimes\Simpl{\g}{\rho})^{\nili}$$
	%	Then for $N\in\nili$ we have
	%		$$Nm=\sum_i Nu_i\otimes x_i + \sum_iu_i\otimes Nx_i=0$$
	%	Applying $N$ increases the weight of a vector. So if we take an $x_i$ of minimal weight, no $x_j$ on the left hand side can has the same weight as $Nx_i$, hence $Nx_i$ has to be zero.
	We have the following equalities:
	\begin{align*}
	\aP{\g}{\lambda}\cong\trans{-\rho}{\lambda}\Ver{\g}{-\rho}\cong\trans{-\rho}{\lambda}\ind\Ver{\levi}{-\rho}.
	\end{align*}
	By Theorem \ref{thm:outofclosure} we know that the right hand side has a filtration $\{N_i\}$ with successive quotients
	$$\ind\trans{-\rho}{w\cdot\lambda}\Ver{\levi}{-\rho},$$
	for $w$ shortest representatives of the double cosets $\W_{\levi}\backslash\Weyl/\W_{\g,\lambda}.$
	But for all $w\neq\id$ we certainly have
	$$\ppres{\ind\trans{-\rho}{w\cdot\lambda}\Ver{\levi}{-\rho}}=0$$
	since $\zi$ acts on $\ind\trans{-\rho}{w\cdot\lambda}\Ver{\levi}{-\rho}$ with weights smaller or equal than $\pz{(w\cdot{\lambda})}$, while $\pz{\lambda}>\pz{(w\cdot{\lambda})}$.
	Our filtration provides us with short exact sequences of the form
	\begin{center}
		\ses{N_{i+1}}{N_i}{\ind\trans{-\rho}{w\cdot\lambda}\Ver{\levi}{-\rho}}
	\end{center}
	and since $\ppres{-}$ is left exact we get exact sequences 
	\begin{center}
		\lses{\ppres{N_{i+1}}}{\ppres{N_{i}}}{\ppres{\ind\trans{-\rho}{w\cdot\lambda}\Ver{\levi}{-\rho}}.}
	\end{center}
	As long as $w\neq\id$ the right term vanishes and hence
	$$\ppres{N_{i+1}}=\ppres{N_{i}}.$$
	By induction we get 
	$$\ppres{\aP{\g}{\lambda}}=\ppres{\ind\trans{-\rho}{\lambda}\Ver{\levi}{-\rho}}=\ppres{\ind\aP{\levi}{\lambda}}.$$
	But for modules in $M\in\cato{\levi}{\lambda}$ we know that $\ppres{\ind(M)}=M$  by Theorem \ref{thm:indcollection} and the statement follows.
\end{proof}
%\subsection{Endomorphismensatz}\label{sec:endosatz}
Soergel's \emph{Endomorphismensatz} gives a completely explicit description of the endomorphism rings of antidominant projectives in category $\mathcal O$. His description is compatible with parabolic restriction in the following way.
\begin{theorem}[Endomorphismensatz \cite{Soe90}]Let $\lambda \in \cartan^*$ be  an integral weight which is dominant for $\Phi^+_\g$. \label{thm:kompatiblerendomorphsatz}Then the following diagram commutes:
	\begin{center}
		\begin{tikzcd}
			(\Sym{\cartan}/(\Sym{\cartan}^{\Wi}_+))^{\W_{\levi,\lambda}}
			& (\Sym{\cartan}/(\Sym{\cartan}^{\Weyl}_+))^{\W_{\g,\lambda}}\arrow[l]
			\\
			\Sym{\cartan}^{(\Wi\cdot)}  \arrow[two heads, "p\circ(+\lambda)^\sharp",u]
			& \Sym{\cartan}^{(\Weyl\cdot)} \arrow[hook',l] \arrow[two heads,"p\circ(+\lambda)^\sharp"',u]
			\\
			%\Sym{\cartan}^{(\Wi\cdot)}  
			% & \Sym{\cartan}^{(\W\cdot)} \arrow[hook',l]
			%\\
			\Zent{\levi}  \arrow[two heads,"\operatorname{act}"',d]\arrow["\HC{\levi}{\boreli}","\wr"',u] &
			\Zent{\g} \arrow[l,"\HC{\g}{\para}"'] \arrow[two heads,"\operatorname{act}",d] \arrow["\HC{\g}{\borel}"',"\wr",u]
			\\
			\CoInv{\levi}{\lambda}\defi\End{\levi}(\aP{\levi}{\lambda})  & 
			\arrow[l,"\ppres{-}"'] \End{\g}(\aP{\g}{\lambda})\defi \CoInv{\g}{\lambda}
		\end{tikzcd}
	\end{center}
	where $(+\lambda)^\sharp$ denotes translation of a function in $\Sym{\cartan}=\mathcal{O}(\cartan^*)$ by $\lambda$ and $p$ the projection.
	Furthermore, the upward arrows $p\circ(+\lambda)^\sharp\circ\HC{\levi}{\boreli}$ and $p\circ(+\lambda)^\sharp\circ\HC{\g}{\borel}$ are surjective and have the same kernel as the downward arrows $\operatorname{act}$, which are also surjective.
\end{theorem}
\begin{proof}
	For the horizontal morphism on the bottom we use Lemma \ref{lem:pararesantidomproj} which provides an isomorphism $\aP{\levi}{\lambda}\cong\ppres{\aP{\g}{\lambda}}$. Lemma \ref{lem:hcmorphismandinvariants} shows that the lower square commutes. The middle square commutes since (relative) Harish-Chandra homomorphisms are compatible, see \cite[Equation 1.12]{Ho98}. The upper one commutes by definition. The other statements are \cite[Endomorphismensatz]{Soe90}.
\end{proof}

\begin{corollary}\label{cor:kompatiblerendomorphsatz}
The following diagram commutes:
	\begin{center}
		\begin{tikzcd}
			(\Sym{\cartan}/(\Sym{\cartan}^{\Wi}_+))^{\W_{\levi,\lambda}}\arrow["\wr"',d] &
			(\Sym{\cartan}/(\Sym{\cartan}^{\Weyl}_+))^{\W_{\g,\lambda}} \arrow[l] \arrow["\wr",d]
			\\
			\CoInv{\levi}{\lambda}=\End{\levi}(\aP{\levi}{\lambda})  & 
			\arrow[l,"\ppres{-}"'] \End{\g}(\aP{\g}{\lambda})=\CoInv{\g}{\lambda}
		\end{tikzcd}
	\end{center}
	Here the vertical arrows are defined as in the preceding Theorem.
\end{corollary}
We are now able to prove how parabolic induction and Soergel's functor $\mathbb{V}$ interact for blocks $\cato{\levi}{\lambda}$ where $\lambda\in\cartan^*$ is an integral weight which is dominant for $\Phi^+_\g$. This is the base case of the inductive proof of the general case.
\begin{theorem}\label{thm:inductionbasecase}
	Let  $\lambda\in\cartan^*$ be an integral weight which is dominant for $\Phi^+_\g$. Then the following diagram commutes up to natural isomorphism.
	\begin{center}
		\begin{tikzcd}
			\CoInv{\levi}{\lambda}\modules \arrow["\res_{\CoInv{\levi}{\lambda}}^{\CoInv{\g}{\lambda}}",r]  & 
			\CoInv{\g}{\lambda}\modules 
			\\
			\cato{\levi}{\lambda} \arrow[r,"\ind"] \arrow[u,"\V{\levi}{\lambda}"]& 
			\cato{\g}{\lambda} \arrow[u,"\V{\g}{\lambda}"']
		\end{tikzcd}
	\end{center}
\end{theorem}
\begin{proof}
	Since $\ind$ is left adjoint, we have to use a different definition of Soergel's functor $\mathbb V$. By \cite[Lemma 9]{Soe90}, there are equivalences of functors:
	\begin{align*}
	\V{\g}{\lambda}&=\Hom{\g}{\aP{\g}{\lambda}}{-}\cong d\Hom{\g}{-}{\aP{\g}{\lambda}}\\
	\V{\levi}{\lambda}&=\Hom{\levi}{\aP{\levi}{\lambda}}{-}\cong d\Hom{\levi}{-}{\aP{\levi}{\lambda}},
	\end{align*}
	where $d$ denotes the duality.
	There are the following equivalences of functors $\cato{\levi}{\lambda}\rightarrow \C\modules$:

	\begin{center}
		\begin{tikzcd}
			\Hom{\g}{\ind-}{\aP{\g}{\lambda}}\arrow[r,"\ppresa{\lambda}{-}","\sim"'] &[.5cm]\Hom{\levi}{-}{\ppresa{\lambda}{\aP{\g}{\lambda}}}=\Hom{\levi}{-}{\aP{\levi}{\lambda}}
		\end{tikzcd}	
	\end{center}
	For the first equivalence we use the adjunction and $\ppresa{\lambda}{\ind-}\cong\id$ (Theorem \ref{thm:indcollection}). For the equality  on the right we identify $\aP{\levi}{\lambda}=\ppresa{\lambda}{\aP{\g}{\lambda}}$ which we are allowed to do by Lemma \ref{lem:pararesantidomproj}. 
	
	By Theorem \ref{thm:kompatiblerendomorphsatz} and Corollary \ref{cor:kompatiblerendomorphsatz} this promotes to an equivalence
	\begin{center}
		\begin{tikzcd}
			\Hom{\g}{\ind-}{\aP{\g}{\lambda}}\arrow[r,"\sim"'] &\res_{\CoInv{\levi}{\lambda}}^{\CoInv{\g}{\lambda}}\Hom{\levi}{-}{\aP{\levi}{\lambda}}
		\end{tikzcd}	
	\end{center}
	of functors $\cato{\levi}{\lambda}\rightarrow \CoInv{\g}{\lambda}\modules$. Now we dualize on both sides and 
	%use $\res d=d\res$ to 
	obtain the statement.
%	We have natural equivalences
%	\begin{align*}
%	\V{\g}{\mu}(\ind(-))&\cong\Hom{\g}{\aP{\g}{\mu}}{\ind(-)}\cong\Hom{\g}{d\ind(-)}{d\aP{\g}{\mu}}\\
%	&\cong\Hom{\g}{\ind(-)}{\aP{\g}{\mu}}\cong\Hom{\g}{-}{\pr{\mu}\pres{\aP{\g}{\mu}}}\\
%	&\cong\Hom{\g}{-}{\aP{\levi}{\mu}}\cong\Hom{\g}{d-}{d\aP{\levi}{\mu}}\\
%	&\cong\Hom{\g}{\aP{\levi}{\mu}}{-}\cong\V{\levi}{\mu}(-).
%	\end{align*}
%	Here we used \cite[Kapitel 2.3, Bemerkung]{Soe90}.
%	By Theorem \ref{thm:kompatiblerendomorphsatz} and Corollary \ref{cor:kompatiblerendomorphsatz} this is compatible with the map 
%	$\CoInv{\levi}{\mu}\rightarrow \CoInv{\g}{\mu}$. 
\end{proof}
\begin{corollary}\label{cor:inductionbasecase}
	Let  $\lambda\in\cartan^*$ be an integral weight which is dominant for $\Phi^+_\g$. Then the following diagram commutes up to natural isomorphism.
\end{corollary}
\begin{center}
	\begin{tikzcd}[column sep=3cm]
		\Hotb(\CoInv{\levi}{\mu}\Smodules)\arrow[r,"\res_{\CoInv{\levi}{\lambda}}^{\CoInv{\g}{\lambda}}"]&\Hotb(\CoInv{\g}{\mu}\Smodules)\\
		\Hotb(\Proje\cato{\levi}{\mu})\arrow[u,"\wr","\V{\levi}{\mu}"']&\Hotb(\Proje \cato{\g}{\mu})\arrow[u,"\wr"',"\V{\g}{\mu}"]\\
		\Derb(\cato{\levi}{\mu})\arrow[r,"\ind"]\arrow[u,"\wr"]&\Derb(\cato{\g}{\mu})\arrow[u,"\wr"'].
	\end{tikzcd}
\end{center}
\begin{proof}
	Follows from Corollary \ref{cor:kompatiblerendomorphsatz}, using that here $\ind$ maps projectives to projectives by Theorem \ref{thm:indcollection} since $\lambda$ is dominant (\ref{thm:indcollection:projtoproj}) and hence acts on the homotopy categories of projectives by pointwise application.
\end{proof}
%\subsection{Parabolic induction and Soergel bimodules}
%\subsubsection{The regular case}
%The following is the heart of this chapter.
Now assume that $\lambda\in\cartan^*$ is some \textbf{regular} integral weight  which is dominant for $\Phi^+_\g$ (for example $\lambda=0$), and abbreviate
$$\Coi\defi\CoInv{\g}{\lambda}\cong \Sym{\cartan}/(\Sym{\cartan}^{\Weyl}_+).$$
For a simple reflection $s$, denote by $\Coi^s$ the $s$-invariants. Then $\Coi^s\subset \Coi$ is a Frobenius extension, and we denote by
$$\Rouqs{s}\defi\cdots\rightarrow0\rightarrow \Coi\rightarrow \Coi\otimes_{\Coi^s}\Coi\rightarrow0\rightarrow\cdots$$
the complex of Soergel bimodules over $\Coi$ known as \emph{Rouquier complex}. Here $\Coi\otimes_{\Coi^s}\Coi$ lives in cohomological degree 0, and the map is the unit of the adjunction between $\res_{\Coi}^{\Coi^s}$ and $\Coi\otimes_{\Coi^s}$. For a reduced expression $w=s_n\cdots s_1$ for $w\in\Weyl$ we define a complex of Soergel bimodules by
$$\Rouq{\underline{w}}\defi \Rouqs{s_1}\otimes_{\Coi}\cdots\otimes_{\Coi} \Rouqs{s_n}.$$
In fact, up to homotopy, this complex does not depend on the choice of shortest expression, but this is not important for us. % and also follows from the next Theorem.
Also abbreviate
$$\res\defi \res_{\CoInv{\levi}{\lambda}}^{\CoInv{\g}{\lambda}}.$$
%the morphism discussed in Section \ref{sec:hcmorph}. 
Furthermore identify
$$\CoInv{\levi}{w\cdot\lambda}=\Sym{\cartan}/(\Sym{\cartan}^{\Wi}_+)=\CoInv{\levi}{\lambda}$$
We will show that that on the level of Soergel modules, parabolic induction for regular blocks
$$\ind:\Derb(\cato{\levi}{w\cdot\lambda})\rightarrow\Derb(\cato{\g}{\lambda})$$
is given by the functor
$$\sind{w}{\lambda}\defi\Rouq{\underline{w}}\otimes_{\Coi}\res(-).$$
\begin{theorem}\label{thm:sindregular} %TODO gilt das automatisch s_n\cdots s_i$ is also a shortest coset representative for all $i$
	Let $\lambda\in\cartan^*$ be a \emph{regular} integral weight which is dominant for $\Phi^+_\g$. Let $w$ be a shortest coset representative in $\Wi\backslash\Weyl$.
	% such that $s_n\cdots s_i$ is also a shortest coset representative for all $i$. 
	Then the following diagram of functors commutes up to natural isomorphism 
\begin{center}
	\begin{tikzcd}[column sep=3cm]
	\Hotb(\CoInv{\levi}{w\cdot\lambda}\Smodules)\arrow[r,"\sind{w}{\lambda}"]&\Hotb(\CoInv{\g}{\lambda}\Smodules)\\
	\Hotb(\Proje\cato{\levi}{w\cdot\lambda})\arrow[u,"\wr","\V{\levi}{w\cdot\lambda}"']&\Hotb(\Proje \cato{\g}{\lambda})\arrow[u,"\wr"',"\V{\g}{\lambda}"]\\
	\Derb(\cato{\levi}{w\cdot\lambda})\arrow[r,"\ind"]\arrow[u,"\wr"]&\Derb(\cato{\g}{\lambda})\arrow[u,"\wr"'].
\end{tikzcd}
\end{center}
\end{theorem}
%\begin{proof} Postponed.
%\end{proof}
\begin{proof}%[Proof of Theorem \ref{thm:sindregular}]
	First assume that $l(w)=0$, then $w=e$ and the statement is Corollary \ref{cor:inductionbasecase}.
	
	Now let $ws>w$ with both $ws$ and $w$ shortest representatives in $\Wi\backslash\Weyl$. Assuming that the statement holds for $w$, we show that it holds for $ws$. 
	
	Denote by $\Delta$ the equivalence between a derived category and the homotopy category of projectives. Let $M\in \Derb(\cato{\levi}{ws\cdot\lambda})$. Denote $\overline{M}=\trans{ws\cdot\lambda}{w\cdot\lambda}M$.  
	We have the following diagram of distinguished triangles:
	\begin{center}
		\begin{tikzcd}
		\V{\g}{\lambda}\Delta\ind\overline{M}
		\arrow[r]\arrow[d,equal]&
		\V{\g}{\lambda}\Delta\theta_s\ind\overline{M}
		\arrow[r]\arrow[d,"\wr","(1)"']&
		\V{\g}{\lambda}\Delta\ind M
		\arrow[r,"+1"]\arrow[d,equal]&~
		\\
		\V{\g}{\lambda}\Delta\ind\overline{M}
		\arrow[r,"(*)"]\arrow[d,"\wr","(2)"']&
		\Coi\otimes_{\Coi^s}\V{\g}{\lambda}\Delta\ind\overline{M}
		\arrow[r]\arrow[d,"\wr","(2)"']&
		\V{\g}{\lambda}\Delta\ind M
		\arrow[r,"+1"]\arrow[d,equal]&~
		\\
		\sind{w}{\lambda}\V{\levi}{w\cdot\lambda}\Delta\overline{M}
		\arrow[r]\arrow[d,equal,"(3)"']&
		\Coi\otimes_{\Coi^s}\sind{w}{\lambda}\V{\levi}{w\cdot\lambda}\Delta\overline{M}
		\arrow[r]\arrow[d,equal,"(3)"']&
		\V{\g}{\lambda}\Delta\ind M
		\arrow[r,"+1"]\arrow[d,equal]&~
		\\
		\sind{w}{\lambda}\V{\levi}{ws\cdot\lambda}\Delta M
		\arrow[r]&
		\Coi\otimes_{\Coi^s}\sind{w}{\lambda}\V{\levi}{ws\cdot\lambda}\Delta M
		\arrow[r]&
		\V{\g}{\lambda}\Delta\ind M
		\arrow[r,"+1"]&~
		\end{tikzcd}
	\end{center}
	The first triangle is given by Theorem \ref{thm:wallcrossing}.

	 (1) Since $\theta_s$ is exact and maps projectives to projectives, it commutes with $\Delta$. On Soergel modules $\theta_s$ is given by $\Coi\otimes_{\Coi^s}$, see \cite[Korollar 1]{Soe90}.
	 
	 (2) This is the induction hypothesis.
	 
	 (3) Recall that we identified $\CoInv{\levi}{ws\cdot\lambda}=\Sym{\cartan}/(\Sym{\cartan}^{\Wi}_+)=\CoInv{\levi}{w\cdot\lambda}$. The functor $\trans{ws\cdot\lambda}{w\cdot\lambda}$ is an equivalence of categories and we have $\V{\levi}{ws\cdot\lambda}=\V{\levi}{w\cdot\lambda}\trans{ws\cdot\lambda}{w\cdot\lambda}$.
	 
	 ($*$) This is given by the adjunction homomorphism by Theorem \ref{thm:wallcrossing}.
	 
	 We hence have the following isomorphism
	 \begin{align*}
	 \V{\g}{\lambda}\Delta\ind M\cong
	 & \operatorname{Cone}(\sind{w}{\lambda}\V{\levi}{ws\cdot\lambda}\Delta M\rightarrow
	 \Coi\otimes_{\Coi^s}\sind{w}{\lambda}\V{\levi}{ws\cdot\lambda}\Delta M)\\
	 =&\Rouq{s}\otimes_C\sind{w}{\lambda}\V{\levi}{ws\cdot\lambda}M\\
	 =&\sind{ws}{\lambda}\V{\levi}{ws\cdot\lambda}M
	 \end{align*}
	 where by $\operatorname{Cone}$ we denote the mapping cone.
	 In order to show that this is indeed a \emph{natural} isomorphism, we  apply the Lemma \ref{lem:triangulatedfunctor}. We have to show that for $M,N\in\Derb(\cato{\levi}{ws\cdot\lambda})$
	 $$\Hom{\Derb(\cato{\g}{\lambda})}{\ind \trans{ws\cdot\lambda}{w\cdot\lambda} N}{\ind M}=0.$$
	 But this follows from $$\operatorname{Ext}^i(\Ver{\g}{xw\cdot\lambda},\Ver{\g}{yws\cdot\lambda})=0$$ for all $x,y\in\Wi$, $i\in\Z$, see \cite[Theorem 6.11]{Hum}.
\end{proof}
%\subsubsection{The singular case}
Now let $\mu\in\cartan^*$ be a possibly \textbf{singular} integral dominant weight and let $w$ be a shortest coset representative in $\Wi\backslash\Weyl/\W_{\g,\lambda}$, let $m=|\W_{\levi,w\cdot\mu}|$. 
Then there are natural maps
$$\CoInv{\g}{\mu}\rightarrow\Coi=\CoInv{\g}{\lambda}\leftarrow\CoInv{\levi}{\lambda}=\CoInv{\levi}{w\cdot\lambda}\leftarrow\CoInv{\levi}{w\cdot\mu}$$
%$$\res\defi \res_{\CoInv{\levi}{\lambda}}^{\CoInv{\g}{\lambda}}\res_{\CoInv{\levi}{w\cdot\mu}}^{\CoInv{\levi}{\lambda}}$$
On the level of Soergel modules, parabolic induction for singular blocks (or rather an $m$-fold direct sum of it)
$$(\ind)^{\oplus m}:\Derb(\cato{\levi}{w\cdot\mu})\rightarrow\Derb(\cato{\g}{\lambda})$$
is given by the functor
$$\sindn{w}{\mu}\defi\res_{\Coi}^{\CoInv{\g}{w\cdot\mu}}\Rouq{\underline{w}}\otimes_{\Coi}\CoInv{\levi}{\lambda}\otimes_{\CoInv{\levi}{w\cdot\mu}}(-)$$
\begin{theorem}\label{thm:indandsindsingular}\sloppy
	Let $\mu,\lambda\in\cartan^*$ be integral weights which are dominant for $\Phi^+_\g$, where $\lambda$ is furthermore regular. Let $w$ be a shortest coset representative in $\Wi\backslash\Weyl/\W_{\g,\mu}$. % and $w=s_n\cdots s_1$ a reduced expression such that $s_n\cdots s_i$ is a shortest coset representative for $\Wi\backslash\Weyl$ for all all $i$. 
	Then the following diagram of functors commutes up to natural isomorphism 
	\begin{center}
	\begin{tikzcd}[column sep=3cm]
		\Hotb(\CoInv{\levi}{w\cdot\mu}\Smodules)\arrow[r,"\sindn{w}{\mu}"]&\Hotb(\CoInv{\g}{\mu}\Smodules)\\
		\Derb(\cato{\levi}{w\cdot\mu})\arrow[r,"(\ind)^{\oplus m}"]\arrow[u,"\wr"]&\Derb(\cato{\g}{\mu})\arrow[u,"\wr"].
	\end{tikzcd}
\end{center}
\end{theorem}
\begin{proof}
	Follows from Theorem \ref{thm:sindregular} and Theorem \ref{thm:fromregtosingind} using that under Soergel's functor $\mathbb V$, $\trans{\mu}{\lambda}$ corresponds to $\res_{\Coi}^{\CoInv{\g}{\mu}}$ and $\trans{\lambda}{\mu}$ to $\Coi\otimes_{\CoInv{\g}{w\cdot\mu}}$, see \cite[Theorem 10]{Soe90}.
\end{proof}
Unfortunately, up until this point, we do not know how to get rid of the $m$-fold direct sum.

%\subsubsection{Proof of the regular case}
%The proof mainly relies on Theorem \ref{thm:wallcrossing} and Corollary \ref{cor:wallcrossing} and an induction on $l(w)$.

\section{Geometric Parabolic Induction}
%!TEX root = ../main.tex
\subsection{Stratified Mixed Tate Motives}
In this section we recall some of the notations and constructions regarding motivic sheaves used in \cite{SoeWe}.
We denote by $\mathcal T$ the system of triangulated categories of motives constructed from the spectrum representing the semisimplification of de Rahm cohomology, see \cite[Section 2.4]{SoeWe}. This is defined in \cite{brad} and forms a \emph{motivic triangulated category} in the sense of Cisinski--D\'eglise, \cite{CD}.

$\mathcal T$ associates to every complex variety $X\in\Var(\C)$ a triangulated $\C$-linear monoidal category
$\Motives{X}$ 
and to every morphism $f: X {\to} Y$ a symmetric triangulated functor $f^*: \Motives{Y} {\to} \Motives{X}$. Denote the tensor unit in $\Motives{X}$ by $\un_X$, we will also refer to this as \emph{constant motive} or \emph{constant sheaf}.
%\begin{align*}
%\Hom{\H(X,A)}(\un, \un(i)[j]) &\cong H^{j}_\mathcal{M}(X, A (i)) \\
%&\cong \Chow^i(X, 2i-j)\otimes A.
%\end{align*}

Since this system of categories is a \emph{motivic triangulated category} it comes with a \emph{full six functor formalism}, $f^*, f_*, f_!, f^!, \otimes, \iHom{}$, Verdier dual $\Dual_X$ functor, and a Tate twist functor $(n)$ for $n\in \Z$, fulfilling all the usual properties one is used to from mixed Hogde modules or mixed $\ell$-adic sheaves, see \cite[A.5.1]{CD}.
Furthermore, due to the particular construction of the categories $\Motives{X}$, we have the following additional properties.
\begin{enumerate}[resume]
	\item \text{(\textbf{Grading condition})} There are no extensions between the Tate motives on $\A^n$, or in formulas
	$$\Hom{\Motives{\A^n}}{\un}{\un(n)[m]}=\begin{cases}
	\C,\text{ for }n=m=0\\
	0,\text{ else.}
	\end{cases}$$
	\item (\textbf{Realization functor}) For every $X\in\Var(\C)$ there is a realization functor
	$$\operatorname{Real}: \Motives{X}\rightarrow \Derb(X(\C);\C)$$
	into the bounded derived category of sheaves on $X(\C)$ equipped with the metric topology. The realization functor is triangulated, monoidal and compatible with the six functors.
\end{enumerate}
%\subsection{Stratified mixed Tate motives}
The categories $\Motives{X}$, similarly to the derived category of sheaves on $X(\C)$, are gigantic. We will restrict ourselves to the analogue to constructible sheaves in our setting, namely to \emph{stratified mixed Tate motives} as introduced in \cite{SoeWe}. We will recall all important definitions and properties in this section.
%\subsubsection{Definitions}
Let $(X,S)$ be a an affinely stratified variety over $\C$, i.e. a variety X with a finite partition into locally closed subvarieties (called the strata of $X$)
\begin{equation*}
X=\bigcup_{s\in S} X_s,
\end{equation*}
such that each stratum $X_s$ is isomorphic to $\A^n$ for some $n$, and the closure $\overline{X}_s$ is a union of strata. The embeddings are denoted by $j_s: X_s\hookrightarrow X$. The prime example we always have in mind here is the flag variety of a reductive group with its Bruhat stratification.
Starting from this datum, \cite{SoeWe} defines the category of \emph{stratified mixed Tate motives} on $X$, which we recall in this paragraph.
We start with the basic case of just one stratum.
\begin{definition}[\cite{SoeWe} 3.1]\label{def:mixedtateonstrata}
	For $X\cong\A^n$, denote by $\MTDer{}{X}$ the full triangulated subcategory of $\Motives{X}$ generated by motives isomorphic to $\un_X(p)$ for $p\in \Z$. Recall that by $\un_X$ we denote the tensor unit in $\Motives{X}$.
\end{definition}
By the grading property we get the following.
\begin{proposition}\label{prop:mixedtateonstrata}
	For $X\cong\A^n$, we have the following equivalence of monoidal $\C$-linear categories:
	\begin{equation*}
	\MTDer{}{X}\cong\C\modules^{\Z\times\Z}\cong\Derb({\C\modules^\Z}).
	\end{equation*}
\end{proposition}
We can now proceed to the general case. Since our category should be closed under taking Verdier duals and other reasonable combinations of the six functors, we have to assume that $(X,S)$ fulfills an additional condition:
\begin{definition}[\cite{SoeWe} 4.5]
	$(X,S)$ is called \emph{Whitney-Tate} if for all $s,t\in S$ and $M\in \MTDer{}{X_s}$ we have $j_t^*j_{s*}M\in \MTDer{}{X_t}$.
\end{definition}
From now on we always assume that $(X,S)$ is Whitney-Tate. In \cite{SoeWe} it is shown that (partial) flag varieties and other examples are indeed Whitney-Tate.
\begin{definition}[\cite{SoeWe} 4.7]\label{def:stratmixedtate}
	The category of \emph{stratified mixed Tate motives} on $X$, denoted by $\MTDer{S}{X}$, is the full subcategory of $\Motives{X}$ consisting of objects $M$ such that $j_s^*M\in \MTDer{}{X_s}$ for all $s\in S$.
\end{definition}
The right definition of a map between affinely stratified varieties is different to the usual definition of a stratified map, as defined for example in \cite{GM88}. 
\begin{definition}
	Let $(X,S)$ and $(Y,S^\prime)$ be affinely stratified varieties. We call
	$f:X\rightarrow Y$ an \emph{affinely stratified map} if
	\begin{enumerate}
		\item for all $s \in S^\prime$ the inverse image $f^{-1}(Y_s)$ is a union of strata;
		\item for each $X_s$ mapping into $Y_{s^\prime}$, the induced map $f:X_s\rightarrow Y_{s^\prime}$ is a projection $\A^n\times\A^m\rightarrow \A^m$.
	\end{enumerate}
\end{definition}
Stratified mixed Tate motives are compatible with functors induced from affinely stratified maps.
\begin{lemma}\label{lem:sixfunctorsonstrata} Let $X\in \Var(\C)$. Consider
	\begin{equation*}
	s:X \rightleftarrows  \A^n_X:p
	\end{equation*}
	where $p$ denotes the projection and $s$ the zero section. Then\\
	\begin{minipage}{0.5\linewidth}
		\begin{align*}
		p_*(\un_{\A^n_X})&=\un_{X} 	\\
		p^*(\un_{X})&=\un_{\A^n_X} 	\\
		s^*(\un_{\A^n_X})&=\un_{X} 	
		\end{align*}
	\end{minipage}\hspace{-10pt}
	\begin{minipage}{0.2\linewidth}
		\begin{align*}
		p_!(\un_{\A^n_X})&=\un_{X}(-n)[-2n]\\
		p^!(\un_{X})&=\un_{\A^n_X}(n)[2n]\\
		s^!(\un_{\A^n_X})&=\un_{X}(-n)[-2n]
		\end{align*}\end{minipage}\vspace{10pt}\\
	Furthermore $\Dual_X(\un_{X}(m)[2m])=\un_{X}(\dim X-m)[2\dim X-2m]$ if $X$ is smooth.
\end{lemma}
\begin{proposition}\label{prop:sixfunctorsandstratifiedmaps} 
	Let $(X,S)$ and $(Y,S^\prime)$ be affinely Whitney-Tate stratified varieties and $f:X\rightarrow Y$ an affinely stratified map. Then the induced functors restrict to stratified mixed Tate motives on $X$ and $Y$. In formulas
	\begin{equation*}
	f_*,f_!: \MTDer{S}{X}\rightleftarrows\MTDer{S^\prime}{Y}:f^*,f^!
	\end{equation*}
	Also the internal Hom, duality and tensor product restrict.
\end{proposition}
\begin{proof}\cite[Proposition 3.8]{EKe}
%	Duality preserves stratified mixed Tate motives because $(X,S)$ and $(Y,S^\prime)$ are Whitney-Tate. So we only have to prove the statements for half of the six functors. 
%	The statement for $f^*$ follows directly from the definitions.
%	
%	We consider $f_!$ next. Let $E\in\MTDer{S}{X}$. We have to show $v^*f_!E\in\MTDer{}{Y_s}$ for all strata $v:Y_s\hookrightarrow Y$. By  base change applied to the Cartesian diagram
%	\begin{center}
%		\Cartesiandiagramwithmaps{f^{-1}(Y_s)}{w}{X}{g}{f}{Y_s}{v}{Y}
%	\end{center}
%	we have to show that $g_!w^*E\in\MTDer{}{Y_s}$. This can be done by an induction on the number of strata in $f^{-1}(Y_s)$. 
%	Denote by $j$ the inclusion of an open stratum $X_s$ in $f^{-1}(Y_s)$ and by $i$ the one of the complement. We obtain the distinguished triangle
%	\begin{center}
%		\disttriangle{g_!j_!j^*w^*E}{g_!w^*E}{g_!i_!i^*w^*E}
%	\end{center}
%	Let us first consider the left hand side. By assumption we have $j^*w^*E=j^*_sE\in \MTDer{}{X_s}$. Since $f$ is a affinely stratified map, $gj$ is a projection $\A^n\times\A^m\rightarrow\A^n$ and by Lemma \ref{lem:sixfunctorsonstrata} $(gj)_!$ maps $\MTDer{}{X_s}$ to $\MTDer{}{Y_s}$. Hence we have $g_!j_!j^*w^*E\in\MTDer{S^\prime}{Y}$.
%	The right hand side is a stratified mixed Tate motive by induction. Now the statement follows from the fact that $\MTDer{}{Y_s}$ is closed under extensions.
%	
%	The statement for the tensor product follows immediately, since pullback is a tensor functor, and we are done. 
\end{proof}
%\subsubsection{Weight structure}
Weight structures---as first considered in \cite{Bon}---provide a very concise framework for the powerful \emph{yoga of weights}, as applied, for example, in the proof of the Weil conjectures or the decomposition theorem for perverse sheaves.
\begin{definition}
	Let $\mathcal C$ be a triangulated category. A \emph{weight structure} on $\mathcal C$ is a pair $(\mathcal C_{w\leq 0},\mathcal C_{w\geq 0})$ of full subcategories of $\mathcal C$ such that with $\mathcal C_{w\leq n}:=\mathcal C_{w\leq 0}[n]$ and $\mathcal C_{w\geq n}:=\mathcal C_{w\leq 0}[n]$ the following conditions are satisfied:
	\begin{enumerate}
		\item $\mathcal C_{w\leq 0}$ and $\mathcal C_{w\geq 0}$ are closed under direct summands;
		\item $\mathcal C_{w\leq 0}\subseteq \mathcal C_{w\leq 1}$ and $\mathcal C_{w\geq 1}\subseteq \mathcal C_{w\geq 0}$;
		\item for all $X\in \mathcal C_{w\leq 0}$ and $Y\in\mathcal C_{w\geq 1}$, we have $\Hom{\mathcal C}{X}{Y}=0$
		\item for any $X\in \mathcal C$ there is a distinguished triangle \disttriangle{A}{X}{B} with $A\in \mathcal C_{w\leq 0}$ and $B\in \mathcal C_{w\geq 1}$
	\end{enumerate}
	The full subcategory $\mathcal C_{w=0}=\mathcal C_{w\leq 0}\cap\mathcal C_{w\geq 0}$ is called the \emph{heart of the weight struture}.
\end{definition}
A weight structure on stratified mixed Tate motives on an affinely stratified variety can be obtained by gluing of weight structures on the strata. The motive $\un_{\A^n}(p)[q]$ is defined to have weight $q-2p$. 
\begin{definition}
	Let $\MTDer{}{\A^n}_{w\leq 0}$ (resp. $\MTDer{}{\A^n}_{w\geq 0}$) be the full subcategory of $\MTDer{}{\A^n}$ consisting of objects isomorphic to finite direct sums of $\un_{\A^n}(p)[q]$ for $q\leq 2p$ ($q\geq 2p$). This defines a weight structure on $\MTDer{}{\A^n}$.
\end{definition}
\begin{proof}
	We use Proposition \ref{prop:mixedtateonstrata} to identify $\MTDer{}{\A^n}$ with the derived category of graded vector spaces. Here the axioms of a weight structure are easily checked.
\end{proof}
\begin{definition}[\cite{SoeWe} 5.1] Let $(X, S)$ be an affinely Whitney-Tate stratified variety. Then we obtain a weight structure on $\MTDer{ S}{X}$ by setting
	\begin{align*}
	\MTDer{ S}{X}_{w\leq 0}&:=\left\{M\,|\, j_s^*M \in \MTDer{}{X_s}_{w\leq 0}\text{ for all } s\in  S \right\}\\
	\MTDer{ S}{X}_{w\geq 0}&:=\left\{M\,|\, j_s^!M \in \MTDer{}{X_s}_{w\geq 0}\text{ for all } s\in  S\right\}
	\end{align*}
\end{definition}
With this definition we have the following compatibilities with the six functors.
\begin{proposition}[\cite{EKe} 3.12]\label{prop:weightexactness}
	Let $(X, S)$ and $(Y, S^\prime)$ be affinely Whitney-Tate stratified varieties and $f:X\rightarrow Y$ an affinely stratified map. Then
	\begin{enumerate}
		\item the functors $f^*,f_!$ are weight left exact, i.e. they preserve $w\leq0$;
		\item the functors $f^!,f_*$ are weight right exact, i.e. they preserve $w\geq0$;
		\item the tensor product is weight left exact, i.e. restricts to
		\begin{equation*}
		\MTDer{ S}{X}_{w\leq n}\times\MTDer{ S}{X}_{w\leq m}\rightarrow\MTDer{ S}{X}_{w\leq n+m}
		\end{equation*}
		\item Verdier duality reverses weights, i.e. restricts to
		\begin{equation*}
		\Dual_X: \MTDer{S}{X}_{w\leq n}^{\op}\rightarrow\MTDer{S}{X}_{w\geq -n}
		\end{equation*}
		\item the internal Hom functor $\iHom{X}$ is weight right exact, i.e. restricts to
		\begin{equation*}
		\MTDer{ S}{X}_{w\leq n}^{\op}\times\MTDer{ S}{X}_{w\geq m}\rightarrow\MTDer{ S}{X}_{w\geq m-n}
		\end{equation*}
		\item For f smooth $f^!$ and $f^*$ are weight exact;
		\item For f proper $f_!$ and $f_*$ are weight exact;
		\item If X is smooth $\un_X(n)[2n]$ is of weight zero for all $n\in\Z$.
	\end{enumerate}
\end{proposition}
%\subsubsection{Tilting and motives}
We apply the tilting formalism (see Appendix \ref{subsec:tilting}) to stratified mixed Tate motives. Under an additional \emph{pointwise purity} condition this allows us to identity the category of stratified mixed Tate motives with the homotopy category of its weight zero objects.
\begin{definition}[\cite{SoeWe} 6.1]
	Let $(X, S)$ be an affinely Whitney-Tate stratified variety and  $?\in\{*,!\}$. A motive $M\in\MTDer{ S}{X}$ is called \emph{pointwise ?-pure} if for all $s\in S$
	\begin{equation*}
	i^?_s M\in \MTDer{}{X_s}_{w=0}.
	\end{equation*}
	If both conditions are satisfied, the motive is called \emph{pointwise pure}.
\end{definition}
\begin{proposition}[\cite{SoeWe} 6.3]\label{prop:pwpuretilting}
	Let $(X, S)$ be an affinely Whitney-Tate stratified variety and $M,N\in\MTDer{ S}{X}$ such that $M$ is pointwise $*$-pure and $N$ is pointwise $!$-pure. Then 
	$\Hom{\Motives{X}}{M}{N[a]}=0$ for all $a> 0$.
\end{proposition}
\begin{theorem}[Tilting for motives, \cite{SoeWe} 9.2]\label{thm:tiltingformotives}
	Let $(X, S)$ be an affinely Whitney-Tate stratified variety, such that all objects of $\MTDer{S}{X}_{w=0}$ are additionally pointwise pure. Then there is an equivalence of categories, called \emph{tilting},
	\begin{equation*}
	\Delta:\MTDer{S}{X}\stackrel{\sim}{\rightarrow}\Hotb(\MTDer{ S}{X}_{w=0}).
	\end{equation*}
\end{theorem}
\begin{proof}
The category $\Motives{X}$ can be embedded in a derived category of a Grothendieck abelian category.
\sloppy We can hence take  a system of homotopy injective resolutions of representatives of the isomorphism classes of indecomposable objects in $\MTDer{S}{X}_{w=0}$. This forms a tilting collection by Proposition \ref{prop:pwpuretilting} and the pointwise purity assumption. We can hence apply Theorem \ref{thm:tilting} and the statement follows.
\end{proof}
\begin{remark}\label{rem:tiltingformotives}
	The last theorem can also be stated differently. Namely, let $$\{L_s\}_{s\in S}\subseteq \operatorname{MTDer}_S(X)_{w=0}$$ be a set of representative of indecomposable weight zero stratified mixed Tate motives on $X$. Then $\{L_s(i)[2i]\}_{s\in S,i\in\Z}$ generates $\MTDer{S}{X}_{w=0}$ as an additive subcategory by \cite[Corollary 11.11]{SoeWe}.
	Assume without loss of generality that the $L_s(i)[2i]$ are homotopy projective. They hence form a tilting collection as considered in the proof. Now let $L=\bigoplus_{s\in S} L_s$ and $$H=\bigoplus_{n\in\Z}\Hom{\Motives{X}}{L}{L(n)[2n]}.$$
	Then $H$ is a graded algebra concentrated in even degrees and we have
	\begin{align*}
	\Derb(\operatorname{mod}^{\Z,ev}H)\cong&\Hotb(\langle \{L_s(i)[2i]\}_{s\in S,i\in\Z}\rangle_\oplus^{\Motives{X}})\\
	=&\Hotb(\MTDer{(B)}{X}_{w=0})\cong\MTDer{(B)}{X}
	\end{align*}
	where by $\operatorname{mod}^{ev,\Z}-H$ we denote the category of finitely generated evenly graded $H$ right modules. See Theorem \ref{thm:tilting}.
\end{remark}
\begin{theorem}[Tilting for motives and functors] \label{thm:tiltingformotivesandfunctors}
	\sloppy Let $(X, S)$, $(Y, S')$ be affinely Whitney-Tate stratified varieties, such that all objects of $\MTDer{S}{X}_{w=0}$ and $\MTDer{S'}{Y}_{w=0}$ are additionally pointwise pure. Let $f:X\rightarrow Y$ be an affinely stratified morphism. Assume that either f is smooth and proper or a closed immersion.
	Then tilting commutes with $f^*=f^!(d)[2d]$ and $f_*=f_!$. So for example the diagram 
	\begin{center}
		\begin{tikzcd}
		\Hotb(\MTDer{S}{X}_{w=0}) \arrow[r,"\Delta","\sim"']\arrow[d,"f_*"]& 
		\MTDer{S}{X}\arrow[d,"f_*"]
		\\
		\Hotb(\MTDer{S'}{Y}_{w=0}) \arrow[r,"\Delta","\sim"']& 
		\MTDer{S'}{Y}
	\end{tikzcd}
	\end{center}
	commutes up to natural isomorphism, where on the left side $f_*$ acts by pointwise application. Furthermore tilting commutes with shifts of the form $(n)[2n]$ for $n\in \Z$.
\end{theorem}
\begin{proof}
First of all, the functors $f^*=f^!(d)[2d]$ and $f_*=f_!$ are weight exact, hence really restrict to weight zero motives by Proposition \ref{prop:weightexactness}. It suffices to show the statement for one of the functors, since the other functors are adjoints (or shifts of it).

Asumme that $f$ is smooth and proper. The tilting equivalence was constructed by embedding $\Motives{X}$ and $\Motives{Y}$ in a derived category of a Grothendieck abelian category, say $\mathcal{A}_X$, $\mathcal{A}_Y$. The functor $f^*:\Motives{Y}\rightarrow \Motives{X}$ is actually just pointwise application of an exact functor $f^*:\mathcal{A}_Y\rightarrow\mathcal{A}_X$, since $f$ is smooth (see \cite[5.1.16]{CD}).
We can furthermore assume that our tilting collections for $\{T_i\}\subset\MTDer{S}{X}_{w=0}$ and $\{U_i\}\subset \MTDer{S'}{Y}_{w=0}$ are given by homotopy-injective complexes and chosen in a way, that for every $T_i$ there is a $U_j$ and a quasi-isomorphism $f^*(T_i)\rightarrow U_j$.
We are hence exactly in the setting of Proposition \ref{prop:tiltingandfunctors} (see also Remark \ref{rem:dualtiltingandfunctors}) which shows that $f^*$ commutes with tilting. By adjunction also $f_*=f_!$ have to commute.

The same argument works for $f_!=f_*$ when $f$ is a closed immersion.

Denote by $\pi:\mathbb{P}^1_X\rightarrow X$ the projection. For every $M\in\Motives{X}$ we have a canonical splitting $\pi_*\pi^*M=M\oplus M(-1)[-2]$, hence also shifts of the form $(n)[2n]$ commute with tilting.
\end{proof}
\subsection{Flag Varieties}
In this section we introduce a lot of notation for reductive algebraic groups and partial flag varieties. We also introduce the maps we will later use to construct a geometric version of parabolic induction.

%\subsubsection{Generalities}
Let $G\supset B \supset T$ be a reductive algebraic group over the complex numbers together with a Borel subgroup and maximal torus.
Denote by 
$$X(T)\supset\Phi\supset\Phi^+\supset\Delta$$
the character lattice of $T$, the root system, set of positive and simple roots associated to the choice of $B$.
Denote by
$$\W=\operatorname{N}_G(T)/T\supset\Ss$$
the Weyl group and the set of simple reflections. By abuse of notation, we will let elements of $\W$ act on cosets, subgroups, etc. whenever the action does not depend on a choice of representative mod $T$.
For $\alpha \in \Phi$ we denote the root subgroup of $G$ on which the conjugation action by $T$ is described by $\alpha$ by
$$\mathbb{G}_a\cong U_\alpha\subset G.$$ 
Denote by $U$ and $U^-$ the unipotent radical of $B$ and $B^-$, where by $B^-$ we denote the opposite Borel. For $x\in \W$ we define $$U_x\defi U\cap xU^-x^{-1}=\langle U_\alpha\,|\,\alpha \in x(\Phi^-)\cap\Phi^+\rangle\subset B.$$

By a \emph{standard parabolic subgroup} we mean a subgroup $G\supset Q\supset B$.
We denote its Weyl group and simple reflections by
$$\Ss_Q=\Ss\cap\W_Q\subset\W_Q\subset\W.$$
From here on out, we always fix one particular standard parabolic subgroup
$$B\subset P \subset G.$$
We denote its Levi factor by
$$P\twoheadrightarrow P/\operatorname{Rad}_u(P)\defi L$$
and for convenience choose a splitting of this map to interpret $L$ as a subgroup of $P$. We denote by 
$$A=B/\operatorname{Rad}_u(P)\subset L$$
its Borel subgroup. In this chapter we will be interested in (partial) flag varieties associated to $G$ and $L$. But for convenience, we will always prefer to work with quotients of $P$ instead of quotients of $L$ using
$$L/A\cong P/B.$$
Partial flag varieties associated to $G$ or $P$ are always  affinely Whitney-Tate stratified varieties with respect to their stratification by Bruhat cells (these are precisely the $B$-orbits) \cite[4.10]{SoeWe} and we can hence look at the associated categories of stratified mixed Tate motives
$$\MTDer{(B)}{G/B}, \MTDer{(B)}{G/P}, \MTDer{(B)}{P/B}\dots$$
%\subsubsection{Maps for geometric parabolic induction}

We can now introduce all necessary notations and maps we will later use to define a geometric version of parabolic induction. If $B\subset Q\subset G$ is a standard parabolic subgroup, then there is a well known \emph{generalized Bruhat decomposition} of $G$ into $P\times Q$ orbits, given by
\begin{align*}
G&=\biguplus_{ w\in\W_P\backslash\W/\W_Q}P wQ\\
\intertext{and an associated stratification of the partial flag variety}
G/Q&=\biguplus_{ w\in\W_P\backslash\W/\W_Q}P wQ/Q.
\end{align*}
As it turns out, those strata $P wQ/Q$ are affine bundles over partial flag varieties associated to $P$, let say $P/Q_w$, for $P\supset Q_w\supset B$ a standard parabolic depending on $Q$ and $w$. Geometric parabolic induction will be constructed by passing between sheaves on $P/Q_w$ and $G/Q$, using the maps and notation from the following Theorem.
\begin{theorem}\label{thm:geomparaindsetup}
	Let $w\in \W$ be a shortest coset representative in $\W_P\backslash\W/\W_Q$ and set 
	\begin{align*}
		\WPQ{Q}{w}&\defi\W_P\cap w\W_Qw^{-1}\subset \W_P,\\
		\PQ{Q}{w}&\defi B\WPQ{Q}{w}B\subset P\text{ and }\\
		A_w&\defi L\cap \PQ{Q}{w}.
	\end{align*}
	Let $x$ be a shortest representative in $\W_P/\WPQ{Q}{w}$. Consider the diagram
	\begin{center}
		\begin{tikzcd}[column sep= 0.8cm]
		L/A_w &[-15pt]
		P/\PQ{Q}{w} \arrow[l,"\sim"'] &
		P\times^{\PQ{Q}{w}} BwQ/Q \arrow[r,"\sim","\operatorname{mult}"']\arrow[two heads,l,"\pr{w}"] &[-10pt] 
		PwQ/Q \arrow[hook,r,"\operatorname{h_w}"'] &[-15pt]
		G/Q
		\\
		\Boreli x \Boreli_w/\Boreli_w\arrow[u,hook]& 
		Bx\PQ{Q}{w}/\PQ{Q}{w} \arrow[u,hook]\arrow[l,"\sim"'] &
		Bx\PQ{Q}{w} \times^{\PQ{Q}{w}} BwQ/Q \arrow[r,"\sim","\operatorname{mult}"']\arrow[two heads,l,"\pr{w}"]\arrow[u,hook]&
		BxwQ/Q\arrow[u,hook]&
		\\
		& 
		U_x\dot{x} \arrow[u,"\wr"] &
		U_x\dot{x}\times U_w\dot{w} \arrow[two heads,l,"\operatorname{pr}_1"]\arrow[u,"\wr"] \arrow[r,"\sim","\operatorname{mult}"']&
		U_{xw}\dot{x}\dot{w}\arrow[u,"\wr"]
		&			
		\end{tikzcd}
	\end{center}
	where by $\dot{y}\in \operatorname{N}_G(T)$ we denote a representative in $y=\dot{y}T$.
	Then the following statements hold.
	\begin{enumerate}
		\item $\WPQ{Q}{w}$ is generated by simple reflections,
		\item $\PQ{Q}{w}$ acts on $BwQ$ by left multiplication and
		\item $\PQ{Q}{w}$ contains the stabilizer of the action of $P$ on $wQ/Q$ $$\PQ{Q}{w}\supset P\cap wQw^{-1}.$$
		%\item $\PQ{Q}{w}$ is the stabilizer of $wQ/Q$ in $P$,
		%\item $\PQ{Q}{w}=P\cap wQw^{-1}$,
		\item The diagram is well defined and all squares are commutative and Cartesian.
		\item The arrows marked by $\sim$ are isomorphisms.
	%	\item The map $\pr{w}$ is a $\A^{l(w)}$-fibration.
	\end{enumerate}
\end{theorem}
\begin{proof}
	(1) Since Q is a standard parabolic subgroup, $\W_Q$ is the isotropy group of some dominant weight $\lambda$. Since $w$ is a shortest coset representative $w\cdot\lambda$ is still dominant for $\mathcal{S}_P$. Now $\WPQ{Q}{w}$ is the isotropy group of $w\cdot\lambda$ in $\W_P$ and hence generated by simple reflections, see \cite[Theorem 1.12 (c)]{HumCox}.
	
	(2) Let $s\in \WPQ{Q}{w}$ be a simple reflection. Write $s=wqw^{-1}$ for $q\in\W_Q$. Let $y\in \W_Q$, then
	$$BsBwByB\subset BswyB \cup BwyB=BwqyB \cup BwyB\subset BwQ.$$ 
	The statement follows since $\WPQ{Q}{w}$ is generated by simple reflections.
	
	(3) We show that in fact $B(P\cap wQw^{-1})B\subset\PQ{Q}{w}$.
	We have
	\begin{align*}
	B(P\cap wQw^{-1})B&=B\W_PB\cap BwB\W_QBw^{-1}B\\
	&=B\W_PB\cap Bw\W_QBw^{-1}B\\
	&\subset\bigcup_{I}B(\W_P\cap w\W_Qw_I^{-1})B
	\end{align*}
	where the second equality holds since $w$ is reduced with respect to $\W_Q$ and the
  $w_I$ denote subexpressions of $w$ (see \cite[IV.2.1 Lemma 1]{Bou}). But now assume that there are $p\in\W_P$, $q\in\W_Q$ such that
	$p=wqw_I^{-1}$. Then $pw_I=wq$ and both represent the same coset in $\W_P\backslash \W/\W_Q$. Since $l(w_I)\leq l(w)$ and $w$ is the shortest representative, we have $w=w_I$ and hence 
	\begin{align*}
	B(P\cap wQw^{-1})B\subset B(\W_P\cap w\W_Qw^{-1})B=\PQ{Q}{w}.
	\end{align*}
		
	(4) Follows from (2).
	
	(5) 
	The multiplication maps in the first row is an isomorphism by (3). The maps from the bottom to the middle row are isomorphisms since $x,w$ and $xw$ are shortest coset representatives in $\W_P/\WPQ{Q}{w}, \W/\W_Q$ and $\W/\W_Q$, respectively. Here we use that for $x\in\W_P$, $xw$ is a shortest representative in $\W/\W_Q$ if and only if $x$ is a shortest representative in $\W/(\W_P\cap wW_Qw^{-1})$ (see \cite[1.2]{Xe}). The multiplication map in the bottom row is an isomorphism since
	$l(xw)=l(x)+l(w)$ (see \cite[IV Excercise \S 1.3]{Bou}).
	All other statements follow.
	
%	(6) Follows from (4) and (5).
	
	See also Chapter 3 in \cite{BorTits} for a good reference on BN-pairs and parabolic subgroups.
\end{proof}

\begin{corollary}
In the regular case $Q=B$ the notation simplifies to
		\begin{center}
			\begin{tikzcd}[column sep= 0.8cm]
				L/\Boreli  & 
				P/B\arrow[l,"\sim"']  & 
				P\times^B BwB/B \arrow[r,"\sim","\operatorname{mult}"']\arrow[two heads,l,"\pr{w}"] & 
				PwB/B \arrow[hook,r,"\operatorname{h_w}"'] & 
				G/B
				\\
				\Boreli x \Boreli/\Boreli\arrow[u,hook]& 
				BxB/B \arrow[u,hook]\arrow[l,"\sim"'] &
				BxB \times^B BwB/B \arrow[r,"\sim","\operatorname{mult}"']\arrow[two heads,l,"\pr{w}"]\arrow[u,hook]&
				BxwB/B\arrow[u,hook]&
				\\
				& 
				U_x\dot{x} \arrow[u,"\wr"] &
				U_x\dot{x}\times U_w\dot{w} \arrow[two heads,l,"\operatorname{pr}_1"]\arrow[u,"\wr"] \arrow[r,"\sim","\operatorname{mult}"']&
				U_{xw}\dot{x}\dot{w}\arrow[u,"\wr"]
				&
			\end{tikzcd}
		\end{center}
		for $x\in \W_P$.
\end{corollary}
\begin{example}
	Let $G=\operatorname{GL}_3$ and $s,t$ be the simple reflections in $\W_G$.
	
	(1) The case of \textbf{disjoint parabolic subgroups}. Let $P=B\cup BsB$, $Q=B\cup BtB$ be minimal parabolic subgroups. Then $G/Q=\mathbb{P}^2$ has two $P$-orbits corresponding to the decomposition $\mathbb{P}^2=\mathbb{P}^1\cup\A^2$.\\
	\begin{center}
		\begin{tikzcd}
			\fill[lightlight-gray] (0,4) rectangle (4,0);
			\draw[line width=6pt,white] (0,0) -- (4,0);
			\draw[line width=1.6pt] (0,0) -- (4,0);
			\draw[white,fill=white] (0,0) circle [radius=6pt];
			\fill (0,0) circle [radius=3pt];
			\node at (2,2) {BtsQ/Q};
			\node at (-1.1,2) {PtsQ/Q=};
			\node at (-1.1,-0.1) {PQ/Q=};
			\node[below] at (2,0) {BsQ/Q};
			\node[below] at (0,0) {Q/Q};
			\draw[->,thick] (4.5,0) -- (5.5,0) node [pos=0.66,above] {\operatorname{pr}_e};
			\draw[->,thick] (4.5,2) -- (5.5,2) node [pos=0.66,above] {\operatorname{pr}_{ts}};
			\draw[line width=6pt,white] (8,0) -- (10.5,0);
			\draw[line width=1.6pt] (8,0) -- (10.5,0);
			\draw[white,fill=white] (8,0) circle [radius=6pt];
			\fill (8,0) circle [radius=3pt];
			\fill (8.5,2) circle [radius=3pt];
			\node at (8.5,2.2) {P/\PQ{Q}{ts}};
			\node[below] at (9.7,0) {Bs\PQ{Q}{e}/\PQ{Q}{e}};
			\node[below] at (8,0) {\PQ{Q}{e}/\PQ{Q}{e}};
			\node at (8-1.1,2) {P/\PQ{Q}{ts}=};
			\node at (8-1.1,-0.1) {P/\PQ{Q}{e}=};
		\end{tikzcd}
	\end{center}
	Here $\operatorname{pr}_e:PQ/Q\cong \mathbb{P}^1\stackrel{\sim}{\rightarrow}\mathbb{P}^1\cong P/\PQ{Q}{e}$ and $\operatorname{pr}_{ts}: PtsQ/Q\cong\mathbb{A}^2\rightarrow pt\cong P/\PQ{Q}{ts}$.
	
	(2) The case of \textbf{meeting parabolic subgroups}. Let $P=Q=B\cup BsB$. Then $G/Q=\mathbb{P}^2$ has two $P$-orbits corresponding to the decomposition  $\mathbb{P}^2=pt\cup \mathcal{O}(1)$.
	\begin{center}
		\begin{tikzcd}
			\fill[lightlight-gray] (0,4) rectangle (4,0);
			\draw[line width=6pt,white] (0,0) -- (0,4);
			\draw[line width=1.6pt] (0,0) -- (0,4);
			\draw[white,fill=white] (0,-.2) circle [radius=9pt];
			\fill (0,-.2) circle [radius=3pt];
			\node at (2,2) {BstQ/Q};
			\node at (-1.1,2) {PtQ/Q=};
			\node at (-1.1,-0.3) {PQ/Q=};
			\node[right] at (0,3.5) {BtQ/Q};
			\node[below] at (0,-0.3) {Q/Q};
			\draw[->,thick] (4.5,-.2) -- (5.5,-.2) node [pos=0.66,above] {\operatorname{pr}_e};
			\draw[->,thick] (4.5,2) -- (5.5,2) node [pos=0.66,above] {\operatorname{pr}_{t}};
			\draw[line width=6pt,white] (8,0) -- (10.5,0);
			\draw[line width=1.6pt] (8,2) -- (10.5,2);
			\fill (8,-.2) circle [radius=3pt];
			\draw[white,fill=white] (8,2) circle [radius=6pt];
			\fill (8,2) circle [radius=3pt];
			\node[below] at (8,2) {\PQ{Q}{t}/\PQ{Q}{t}};
			\node[below] at (9.5,2) {Bs\PQ{Q}{t}/\PQ{Q}{t}};
			\node[below] at (8,-.2) {\PQ{Q}{e}/\PQ{Q}{e}};
			\node at (8-1.1,2) {P/\PQ{Q}{t}=};
			\node at (8-1.1,-0.1-.2) {P/\PQ{Q}{e}=};
		\end{tikzcd}
	\end{center}
	Here $\operatorname{pr}_e: Q/Q\cong pt\stackrel{\sim}{\rightarrow}pt\cong P/\PQ{Q}{e}$ and $\operatorname{pr}_t: PtQ/Q\cong\mathcal{O}(1)\rightarrow \mathbb{P}_1\cong P/\PQ{Q}{t}$, where $\mathcal{O}(1)$ is the \emph{hyperplane bundle} or \emph{Serre's twisting sheaf} on $\mathbb{P}^1.$
\end{example}
%\subsubsection{Compatibilities}
All maps constructed in the preceding section are well-behaved with respect to passing between different standard parabolic subgroups, as described in the following.
\begin{lemma}\label{lem:mapsgeomparaindintowall}
	In the notation of Theorem \ref{thm:geomparaindsetup} let $Q\subset Q'$ be another standard parabolic containing $Q$ and denote all objects associated to $Q'$ by $-'$. Let $w\in \W$ be a shortest coset representative in $\W_P\backslash\W/\W_{Q'}$ and $x\in\W_{Q'}$ be a shortest coset representative in $$w^{-1}\WPQ{Q'}{w}w\backslash\W_{Q'}/\W_{Q}=(w^{-1}\W_{P}w\cap \W_{Q'})\backslash\W_{Q'}/\W_{Q}.$$
	Then $\PQ{Q}{wx}\subset \PQ{Q'}{w}$ and the following diagram commutes
	\begin{center}
		\begin{tikzcd}
			PwxQ/Q\arrow[d,"\pi",two heads] &P\times^{\PQ{Q}{wx}}BwxQ/Q\arrow[l,"\operatorname{mult}"',"\sim"] \arrow[r,"\pr{wx}"',two heads]\arrow[two heads, d,"\pi"]&
			P/\PQ{Q}{wx} \arrow[two heads, d,"\pi"] 
				\\
			PwQ'/Q'&P\times^{\PQ{Q'}{w}}BwQ'/Q'\arrow[l,"\operatorname{mult}"',"\sim"]\arrow[r,"\pr{w}'"',two heads]&
			P/\PQ{Q'}{w}
		\end{tikzcd}
	\end{center}
	If $x=e$, the diagram is moreover Cartesian.
\end{lemma}
\begin{lemma}\label{lem:mapsgeomparaindoutofwall}
	In the notation of Lemma \ref{lem:mapsgeomparaindintowall}, denote by $Z$ the pullback of the diagram
	\begin{center}
		\begin{tikzcd}
			Z \arrow[two heads, d,"\pi"]\arrow[hook, r]& 
			G/Q \arrow[two heads, d,"\pi"]\\
			PwQ'/Q'\arrow[hook, r,"h'_w"]&
			G/Q'
		\end{tikzcd}
	\end{center}
	Then  $$Z=\biguplus_{x}PwxQ/Q\subset G/Q$$
	where $x$ runs over the shortest representatives of the double cosets $$w^{-1}\WPQ{Q'}{w}w\backslash\W_{Q'}/\W_{Q}=(w^{-1}\W_{P}w\cap \W_{Q'})\backslash\W_{Q'}/\W_{Q}.$$
\end{lemma}
\begin{corollary}
	In the notation of Lemma \ref{lem:mapsgeomparaindoutofwall} assume additionally that $\W_{Q'}\subset w^{-1}\W_{P}w$, then the following diagram is Cartesian.
	\begin{center}
		\begin{tikzcd}
			PwQ/Q \arrow[two heads, d,"\pi"]\arrow[hook, r,"h_w"]& 
			G/Q \arrow[two heads, d,"\pi"]\\
			PwQ'/Q'\arrow[hook, r,"h'_w"]&
			G/Q'.
		\end{tikzcd}
	\end{center}
\end{corollary}
\subsection[Geometric Parabolic Induction]{Geometric Parabolic Induction and Translation Functors}
In this section we will introduce \emph{geometric parabolic induction} and then study its interaction with the geometric versions of translation functors, i.e. the functors
\begin{center}
	\begin{tikzcd}
		\pi_!:\MTDer{(B)}{G/Q}\arrow[leftrightarrow,r]&\MTDer{(B)}{G/Q'}: \pi^!
	\end{tikzcd}
\end{center}
associated to the projections $G/Q\rightarrow G/Q'$ for $Q\subset Q'$.

There will be two different cases.
Passing \emph{into} a smaller flag variety, i.e. applying the functors $\pi_!$, will \emph{commute} with geometric parabolic induction.
However, the case of passing \emph{out of} a smaller flag variety, i.e. applying $\pi^!$, will be more complicated. 
%\subsection{Geometric parabolic induction}
\begin{definition}
	Let $B\subset Q\subset G$ be a standard parabolic subgroup and let $w\in \W$ be a shortest coset representative in $\W_P\backslash\W/\W_Q$.
	We then call the functor
	\begin{center}
		\begin{tikzcd}[column sep= 2cm]
			\MTDer{(B)}{P/\PQ{Q}{w}}\arrow[r,"\gind{w}"',"\hop_{w,*}\pr{w}^!"]&\MTDer{(B)}{G/Q}
		\end{tikzcd}
	\end{center}
	\textbf{geometric parabolic induction} and denote it by $\gind{w}$. The maps
	\begin{center}
		\begin{tikzcd} 
			P/\PQ{Q}{w}& 
			\arrow[two heads,l,"\pr{w}"']
			PwQ/Q \arrow[hook,r,"\operatorname{h_w}"] & 
			G/Q
		\end{tikzcd}
	\end{center}
	are defined as in Theorem \ref{thm:geomparaindsetup}. % In particular, $\pr{w}$ is a $\A^{l(w)}$-fibration and $h_w$ is a locally closed embedding.
\end{definition}
\begin{lemma}
	\begin{enumerate}
		\item $\gind{w}$ is well defined, that is, it restricts to stratified mixed Tate motives.
		\item If $w=e$, then  $\gind{e}=\hop_{e,*}$ is weight-exact.
		\item Denote by $i_x:Bx\PQ{Q}{w}/\PQ{Q}{w}\rightarrow P/\PQ{Q}{w}$ and $i'_{xw}:BxwQ/Q\rightarrow G/Q$ the inclusions, then
			$$\gind{w}i_{x,*}\un=i'_{xw,*}\un(l(w))[2l(w)]$$
		where by $\un$ we denote the constant motive on $Bx\PQ{Q}{w}/\PQ{Q}{w}$ and $BxwQ/Q$, respectively.
	\end{enumerate}
\end{lemma}
\begin{proof}
	(1) By Theorem \ref{thm:geomparaindsetup} both $\pr{w}$ and $\hop_w$ are affinely stratified maps, compatible with the Bruhat stratification of $P/\PQ{Q}{w}$ and $G/Q$. The statement follows using Lemma \ref{prop:sixfunctorsandstratifiedmaps}.
	
	(2) In this case $P/\PQ{Q}{w}=P/{P\cap Q}\cong PQ/Q$. Hence $\pr{e}$ is an isomorphism and furthermore $\hop_e$ is a closed embedding and hence weight exact by Proposition \ref{prop:weightexactness}.
	
	(3) By Theorem \ref{thm:geomparaindsetup}, the diagram 
		\begin{center}
			\begin{tikzcd}
				BxwQ/Q \arrow[two heads, d,"\pr{w}"]\arrow[hook,"i''_{xw}", r]& 
				PxwQ/Q \arrow[two heads, d,"\pr{w}"]\arrow[hook,"\hop_{w}", r]&
				G/Q
				\\
				Bx\PQ{Q}{w}/\PQ{Q}{w}\arrow[hook, r,"i_x"]&
				P/\PQ{Q}{w}&
			\end{tikzcd}
		\end{center}
		is Cartesian. Hence by base change and Lemma \ref{lem:sixfunctorsonstrata}
		$$\gind{w}i_{x,*}\un= \gindv{w}i_{x,*}\un=\hop_{w,*}i''_{xw,*}\pr{w}^!\un=i'_{xw,*}\un(l(w))[2l(w)].\qedhere$$
\end{proof}
%\subsection{Into a smaller flag variety}
\begin{theorem}[Geometric parabolic induction and translation into a smaller flag variety]\label{thm:geomparaindintowall}
	Let $B\subset Q\subset Q'$ be standard parabolic subgroups. Denote all objects associated to $Q'$ by $-'$. Let $w$ be a shortest coset representative of $\W_{Q'}\backslash \W/\W_P$. Then the following diagram of functors commutes (up to natural isomorphism).
	\begin{center}
		\begin{tikzcd}[column sep=2 cm]
			\MTDer{(B)}{P/\PQ{Q}{w}} \arrow[r,"\gind{w}"]\arrow[ d,"\pi_*"] &
			\MTDer{(B)}{G/Q} \arrow[ d,"\pi_*"]\\
			\MTDer{(B)}{P/\PQ{Q'}{w}} \arrow[r,"\gindp{w}"]&
			\MTDer{(B)}{G/Q'}
		\end{tikzcd}
	\end{center}
\end{theorem}
\begin{proof}
	Follows immediately from Lemma \ref{lem:mapsgeomparaindintowall}, Lemma \ref{lem:mapsgeomparaindoutofwall} and base change.
\end{proof}
%\subsection{Out of a smaller flag variety}
\begin{theorem}[Geometric parabolic induction and translation out of a smaller flag variety]\label{thm:geomparaindoutofwall}
	Let $B\subset Q\subset Q'$ be standard parabolic subgroups. Denote all objects associated to $Q'$ by $-'$. Consider the composition
	\begin{center}
		\begin{tikzcd}[column sep=1.5 cm]
			\MTDer{(B)}{P/\PQ{Q'}{w}} \arrow[r,"\gindp{w}"] &
			\MTDer{(B)}{G/Q'} \arrow[ r,"\pi^!"]&
			\MTDer{(B)}{G/Q}
		\end{tikzcd}
	\end{center}
	Choose an ordering by length $\{x_k\}_{1\leq k\leq n}$ on the set of shortest coset representatives in $$w^{-1}\WPQ{Q'}{w}w\backslash\W_{Q'}/\W_{Q}=(w^{-1}\W_{P}w\cap \W_{Q'})\backslash\W_{Q'}/\W_{Q}.$$
	Then for all $M\in \MTDer{(B)}{P/\PQ{Q}{w}}$, there exists a family of distinguished triangles
	\begin{center}
		\disttriangle{M_{k-1}}{M_k}{\gind{wx_k}\pi_k^!M}
	\end{center}
	in $\MTDer{(B)}{G/Q}$ where 
	$$M_n=\pi^!\gindp{w}M \text{ and } M_0=0$$
	and the right hand side is given by
	\begin{center}
		\begin{tikzcd}[column sep=1.31 cm]
				\MTDer{(B)}{P/\PQ{Q'}{w}} \arrow[ r,"\pi_k^!"]&
				\MTDer{(B)}{P/\PQ{Q}{wx_k}} \arrow[r,"\gind{wx_k}"] &
				\MTDer{(B)}{G/Q}
		\end{tikzcd}
	\end{center}
	and the map $\pi_k$ is induced by the inclusion $Q_{wx_k}\subset \PQ{Q'}{w}$.
\end{theorem}
\begin{proof}
	By Lemma \ref{lem:mapsgeomparaindoutofwall} there is a Cartesian square
	\begin{center}
		\begin{tikzcd}
			Z \arrow[two heads, d,"\pi"]\arrow[hook,"h", r]& 
			G/Q \arrow[two heads, d,"\pi"]\\
			PwQ'/Q'\arrow[hook, r,"\h{w}"]&
			G/Q'
		\end{tikzcd}
	\end{center}
	with  $$Z=\biguplus_{k}Pwx_kQ/Q\subset G/Q.$$
	Denote by 
	\begin{align*}
		i_k :&Pwx_kQ/Q\hookrightarrow Z\\
		i_{\leq k} :&\bigcup_{l\leq k} Pwx_lQ/Q\hookrightarrow Z
	\end{align*} the inclusions. For $N\in \MTDer{(B)}{Z}$ define $N_k\defi i_{\leq k,!}i_{\leq k}^*N$. Then there exists a family of distinguished triangles
	\begin{center}
		\disttriangle{N_{k-1}}{N_{k}}{i_{k,*}i_{k}^!N}
	\end{center}
	with $N_0=0$ and $N_n=N$,
	using the localisation sequence and induction on $n$. Applied to $$N\defi\pi^!\gind{w}M=\pi^!\gindv{w}M=h_*\pi^!\pr{w}^!M$$ we hence obtain distinguished triangles
	\begin{center}
		\disttriangle{h_*M_{k-1}}{h_*M_{k}}{h_*i_{k,*}i_{k}^!\pi^!\pr{w}'^!M}
	\end{center}
	By Lemma \ref{lem:mapsgeomparaindintowall} the right hand side equals 
	$$h_*i_{k,*}i_{k}^!\pi^!\pr{w}'^!M=\gindv{wx_k}\pi_k^!M=\gind{wx_k}\pi_k^!M$$
	and the statement follows.
\end{proof}
\begin{example} Keep the notation from the last Theorem.

(1)	Assume that the set $\{x_k\}=\{1\}$ has just one element. Then there is an isomorphism of functors
$$\pi^!\gindp{w}\cong \gind{w}\pi^!.$$

(2)	Assume that the set $\{x_n\}=\{1,s\}$ has just two elements and let. Then the theorem yields a distinguished triangles
\begin{center}
	\disttriangle{\gind{w}\pi^!M}{\pi^!\gindp{w}M}{\gind{ws}\pi_s^!M}
\end{center}
\end{example}
%\subsection{Geometric wall crossing}\label{subsec:geomwallcrossing}
We discuss the interaction of the geometric version of the wall crossing functors $\theta_s$ for category $\mathcal O$ and geometric parabolic induction. As in the Category $\mathcal O$ case, this will be an essential ingredient in the induction step of our proof that parabolic induction and geometric parabolic induction correspond to each other.
\begin{theorem}[Geometric parabolic induction and wall crossing functors]\label{thm:wallcrossinggeom}
	Let $Q=(B\cup BsB)$. Let $w\in \W_P\backslash\W$ a shortest coset representative and $s\in \W$ a simple reflection with $ws>w$ such that $ws$ is also a shortest coset representative for $\W_P\backslash\W$. Let $\pi:G/B\rightarrow G/Q$.
	Then there is a distinguished triangle of functors
	\begin{center}
		\disttriangle{\gind{w}}{\pi^!\pi_!\gind{w}}{\gind{ws}}
	\end{center}
	from $\MTDer{(B)}{P/B}$ to $\MTDer{(B)}{G/B}$.
	The map on the left hand side is the unit of the adjunction $(\pi_!,\pi^!)$.
\end{theorem}
\begin{proof}
	This is  more or less Theorem \ref{thm:geomparaindintowall} and \ref{thm:geomparaindoutofwall} combined. Consider the Cartesian diagram
	\begin{center}
		\begin{tikzcd}
			PwB/B\uplus PwsB/B \arrow[equal,r,"i\uplus j"] &[-20pt] Z \arrow[two heads, d,"\pi"]\arrow[hook,"h", r]& 
			G/B \arrow[two heads, d,"\pi"]\\
			& PwQ/Q\arrow[hook, r,"\h{w}'"]&
			G/Q
		\end{tikzcd}
	\end{center}
	Then there are the following natural isomorphisms of functors
	\begin{align*}
		\pi^!\pi_!\gind{w}=\pi^!\pi_!\gindv{w}\cong\pi^!\pi_!h_*i_*\pr{w}^!\cong\pi^!\hop_{w,*}'\pi_!i_!\pr{w}^!\cong h_*\pi^!\pi_!i_!\pr{w}^!.
	\end{align*}
	We apply the localization triangle associated to $(i,j)$ to the term on the right hand side and obtain
	\begin{center}
		\disttriangle{h_*i_!i^!\pi^!\pi_!i_!\pr{w}^!}{h_*\pi^!\pi_!i_!\pr{w}^!}{h_*j_*j^!\pi^!\pi_!i_!\pr{w}^!}
	\end{center}
	Since $ws>w$ the map $\pi i: PwB/B\rightarrow PwQ/Q$ is an isomorphism. Hence
	$$h_*i_!i^!\pi^!\pi_!i_!\pr{w}^!=h_*i_!\pr{w}^!=\gind{w}.$$
	Furthermore $ws>w$ implies that $\PQ{Q}{w}=B$, since 
	$$W_P\cap wW_{Q}w^{-1}=W_P\cap w\{1,s\}w^{-1}=\{1\}.$$ 
	By Lemma \ref{lem:mapsgeomparaindintowall} the diagrams
	\begin{center}
		\begin{tikzcd}
			PwB/B \arrow[r,"\pr{w}"',two heads]\arrow[ d,"\pi  i","\wr"']&
			P/B \arrow[equal, d,"\pi'"] 
			\\
			PwQ/Q\arrow[r,"\pr{w}'"',two heads]&
			P/\PQ{Q}{w}=P/B
	\end{tikzcd}\hspace{30pt}
		\begin{tikzcd}
		PwsB/B \arrow[r,"\pr{ws}"',two heads]\arrow[two heads, d,"\pi  j"]&
		P/B \arrow[equal, d,"\pi'"] 
		\\
		PwQ/Q\arrow[r,"\pr{w}'"',two heads]&
		P/\PQ{Q}{w}=P/B
		\end{tikzcd}
	\end{center}
	commute, and the left hand diagram is cartesian. Hence 
	$$h_*j_*j^!\pi^!\pi_!i_!\pr{w}^!\cong h_*j_*j^!\pi^!\pr{w}'^{!}\pi'_! \cong h_*j_*\pr{ws}^{!}\pi'^!\pi'_!\cong\gindv{ws}=\gind{ws}.$$
	Putting everything together, our distinguished triangle reads
	\begin{center}	
		\disttriangle{\gind{w}}{\pi^!\pi_!\gind{w}}{\gind{ws}}
	\end{center}
	But a priori the first map is induced by the counit of the adjunction $(i_!,i^!)$. That this coincides unit of the adjunction $(\pi_!,\pi^!)$ follows from the following general Lemma.
\end{proof}
\begin{lemma}
	Let $i:Z\leftrightarrow X:\pi$ be two morphisms in $\Var(\C)$, such that $i$ is a closed embedding and $\pi i=\id_Z$. Then the following diagram of functors $\Motives{Z}\rightarrow\Motives{X}$ commutes
	\begin{center}
		\begin{tikzcd}
			i_!\arrow[r]\arrow[d,equal]&\pi^!\pi_!i_! \arrow[d,equal]
			\\
			i_!\arrow[r]\arrow[d,equal]& \pi^!\arrow[d,equal]
			\\
			i_!i^!\pi^!\arrow[r]& \pi^!
		\end{tikzcd}
	\end{center}
	where the top row is the unit of the adjunction $(\pi_!,\pi^!)$ and the bottom row the counit of $(i_!,i^!)$.
\end{lemma}
\begin{proof}
	The two base change morphisms (called exchange morphisms in \cite{CD}) $i_!\id^*\rightarrow\pi^!\id_*$ associated to the diagram
	\begin{center}
		\begin{tikzcd}
			Z\arrow[r,"i"]\arrow[d,"\id"]&X\arrow[d,"\pi"]\\
			Z\arrow[r,"\id"]& Z
		\end{tikzcd}
	\end{center}
	coincide.
\end{proof}
The theorem implies that for every $M\in\MTDer{(B)}{P/B}$ there is an isomorphism 
$$\gind{ws}M\cong \operatorname{Cone}(\gind{w}M\rightarrow\pi^!\pi_!\gind{w}M)$$ in  $\MTDer{(B)}{G/B}$, where by $\operatorname{Cone}$ we denote the mapping cone. In general however, mapping cones are \emph{not functorial}. But as in the proof of Theorem \ref{thm:sindregular}, in our particular situation Lemma \ref{lem:triangulatedfunctor} applies and the mapping cone is indeed functorial.
%This is due to the non-uniqueness of completing a morphism of distinguished triangles, see \cite[IV.7.]{Gelfand2003}. But in our case, we can use the following.
%	Let $\mathcal{C}$ be a triangulated category. Then for any commutative diagram of distinguished triangles of the form
%	\begin{center}
%		\begin{tikzcd}[column sep= 0.6cm]
%			X\arrow[r]\arrow[d] & Y \arrow[r]\arrow[d] & Z\arrow[r,"+1"]\arrow[d,"f",dashed]& ~ \\
%			X'\arrow[r] & Y' \arrow[r] & Z'\arrow[r,"+1"]& ~
%		\end{tikzcd}
%	\end{center}
%	there is an $f\in\Hom{\mathcal{C}}{Z}{Z'}$ completing the diagram to a morphism of triangles. If $\Hom{\mathcal{C}}{X[1]}{Z'}=0$ then $f$ is unique.
\begin{lemma}\label{lem:nobadmorph}
	For all $M,N\in\MTDer{(B)}{P/B}$ we have
	$$\Hom{\Motives{G/B}}{\gind{w}(M)}{\gind{ws}(N)}=0.$$
\end{lemma}
\begin{proof}
	This is simply a matter of their support. Let 
	\begin{center}
		\begin{tikzcd}
			U\defi PwsB/B\arrow[r,"j",hook] &Z\defi U\cup W& W\defi PwB/B\arrow[l,"i"',hook']
		\end{tikzcd}
	\end{center} and denote by $k$ the inclusion of $Z$ in $X$. Notice that $U$ is open in $Z$.
	\begin{align*}
	&\Hom{\Motives{G/B}}{\gind{w}M}{\gind{ws}N}\\
	&=\Hom{\Motives{G/B}}{\gindvd{ws}N}{\gindvd{w}M}~~ \text{ (duality)}\\
	&=\Hom{\Motives{Z}}{k^*\gindvd{ws}N}{k^*\gindvd{w}M}~~ \text{ (support $\subseteq Z$) }\\
	&=\Hom{\Motives{Z}}{j_!j^*k^*\gindvd{ws}N}{i_!i^*k^*\gindvd{w}M}~~ \text{ (support $\subseteq W$, resp. $U$) }\\
	&=\Hom{\Motives{Z}}{j^*k^*\gindvd{ws}N}{j^*i_!i^*k^*\gindvd{w}M}~~ \text{ (adjunction and $j^*=j^!$)}\\
	&=0 ~~ \text{(since $j^*i_!=0$)}
	\end{align*}
	and the claim follows.
\end{proof}
\begin{corollary}\label{cor:coneisnaturalgeomparind}
	There is a natural equivalence of functors
	$$\gind{ws}\cong \operatorname{Cone}(\gind{w}\rightarrow\pi^!\pi_!\gind{w}).$$
\end{corollary}
\begin{proof}
	Follows from Theorem \ref{thm:wallcrossinggeom}, Lemma \ref{lem:nobadmorph} and Lemma \ref{lem:triangulatedfunctor}.
\end{proof}
%\subsection{From the singular case to the regular case}
Up to direct sums and shifts, geometric parabolic induction for partial flag varieties $G/Q$ can be expressed in terms of geometric parabolic induction for the regular flag variety $G/B$ and geometric translation functors.
\begin{theorem}\label{thm:fromsingtoreggeom}
	Let $B\subset Q\subset G$ and $w$ be a shortest representative of a coset in $\W_P\backslash\W/\W_{Q}$. Let $\pi:G/B\rightarrow G/Q$ and $\pi':P/B\rightarrow P/\PQ{Q}{w}$ the projections.
	Then there is a natural equivalence of functors
	\begin{align*}
\pi_*\gind{w}\pi'^!\cong\bigoplus_{x\in \WPQ{Q}{w}}\gindp{w}(l(x))[2l(x)]:\\\MTDer{(B)}{P/\PQ{Q}{w}}\rightarrow\MTDer{(B)}{G/Q}
	\end{align*}
\end{theorem}
\begin{proof}
	By Theorem \ref{thm:geomparaindintowall} we have
	$$\pi_*\gind{w}\pi'^!=\gind{w}\pi'_*\pi'^!.$$
	Now we argue as in \cite[Lemma 3.5.4]{BGS}. The decomposition theorem yields
	$$\pi'_*\un_{P/B}\cong\bigoplus_{x\in \WPQ{Q}{w}}\un_{P/\PQ{Q}{w}}(-l(x))[-2l(x)]$$
	and using Verdier duality we get
	\begin{align*}
		\pi'_*\pi'^!&\cong\pi'_*\iHom{P/B}(\un,\pi'^!(-))\cong\iHom{P/\PQ{Q}{w}}(\pi'_*\un,-)\\&\cong\bigoplus_{x\in \WPQ{Q}{w}}\iHom{P/\PQ{Q}{w}}(\un_{P/\PQ{Q}{w}}(-l(x))[-2l(x)],-)\\&\cong\bigoplus_{x\in \WPQ{Q}{w}}\id(l(x))[2l(x)].
	\end{align*}
	The statement follows. 
\end{proof}
\begin{remark}
	Let us explain why we were allowed to use the decomposition theorem for perverse sheaves here. After all, the decomposition theorem is a statement about constructible sheaves and not motives.
	
	By \cite{brad}, for all $X\in\Var(\C)$, there is a Hodge realization functor
	$$\operatorname{Real}_H: \Motives{X}\rightarrow\Der(X(\C),\C)$$
	from motives on $X$ into the derived category of sheaves on $X(\C)$ (equipped with its metric topology). This is compatible with the six operations.
	
	For an affinely Whitney-Tate stratified variety $(X,S)$, $\operatorname{Real}_H$ restricts to a functor
	$$\operatorname{Real}_H: \MTDer{S}{X}\rightarrow\Derb_S(X,\C).$$
	By \cite[Theorem 11.3]{SoeWe} it induces isomorphisms
	$$\bigoplus_{i\in\Z}\Hom{\MTDer{S}{X}}{M}{N(i)}\stackrel{\sim}{\rightarrow}\Hom{\Derb_S(X,\C)}{\operatorname{Real}_H(M)}{\operatorname{Real}_H(N)}$$
	which are compatible with composition.
	In the notation of the last proof, the decomposition theorem yields
	$$\pi'_*\underline{\C}_{P/B}\cong\bigoplus_{x\in \WPQ{Q}{w}}\underline{\C}_{P/\PQ{Q}{w}}[-2l(x)]\in\Derb_{(B)}(P/\PQ{Q}{w},\C).$$
	Actually, the decomposition theorem in its full strength is not needed here. We can also use that $\pi$ is a fibration with typical fibre $\PQ{Q}{w}/B$, and apply the Leray-Serre spectral sequence, which degenerates on page two by parity vanishing: The cohomology of both $\PQ{Q}{w}/B$ and $P/B$ is concentrated in even degrees. This implies
	$$\pi'_*\underline{\C}_{P/B}=\bigoplus_{i\in\Z}H^{i}(\PQ{Q}{w}/B,\C)\otimes\underline{\C}_{P/\PQ{Q}{w}}[-i].$$
	Combined with the equality $$H^{i}(\PQ{Q}{w}/B,\C)=\bigoplus_{\mathclap{\substack{x\in \WPQ{Q}{w} \\ 2l(x)=i}}}\C$$
	we obtain our statement.
	By \cite[Lemma 6.6.]{SoeWe}, $\pi'_*\underline{\C}_{P/B}$ and $\un_{P/\PQ{Q}{w}}$ are pointwise pure of weight zero. This implies 
	\begin{align*}
		\Hom{\MTDer{S}{X}}{\pi'_*\un_{P/B}}{\un_{P/\PQ{Q}{w}}(n)[2n](i)}&=0\text{ and}\\
		\Hom{\MTDer{S}{X}}{\un_{P/\PQ{Q}{w}}(n)[2n](i)}{\pi'_*\un_{P/B}}&=0
	\end{align*} 
	for all $i\neq 0$, $n\in\Z$, using \cite[Corollary 6.3]{SoeWe}. Hence there are isomorphisms	
	\begin{align*}
	&\Hom{\MTDer{S}{X}}{\pi'_*\un_{P/B}}{\un_{P/\PQ{Q}{w}}(n)[2n]}\\&\cong\Hom{\Derb_{(B)}(P/\PQ{Q}{w},\C)}{\pi'_*\underline{\C}_{P/B}}{\underline{\C}_{P/\PQ{Q}{w}}(n)[2n]}\text{ and}\\
	&\Hom{\MTDer{S}{X}}{\un_{P/\PQ{Q}{w}}(n)[2n]}{\pi'_*\un_{P/B}}\\&\cong\Hom{\Derb_{(B)}(P/\PQ{Q}{w},\C)}{\underline{\C}_{P/\PQ{Q}{w}}(n)[2n]}{\pi'_*\underline{\C}_{P/B}}.
	\end{align*}
	So we can transport the projections and embeddings from the direct sum decomposition of $\pi'_*\underline{\C}_{P/B}$ to $\pi'_*\un_{P/B}$ and obtain $$\pi'_*\un_{P/B}\cong\bigoplus_{x\in \WPQ{Q}{w}}\un_{P/\PQ{Q}{w}}(-l(x))[-2l(x)].$$
	Admittedly, this argument is awkward, and there should be a much more direct proof using a motivic version of the Leray-Serre spectral sequence. 
\end{remark}
\subsection{Geometric Parabolic Induction and Soergel Modules}
The category of stratified mixed Tate motives on a flag variety has a completely combinatorial description as the bounded homotopy category of Soergel modules. In this section we aim to give a description of geometric parabolic induction on the level of Soergel modules, i.e. fill out the question mark in the diagram
\begin{center}
	\begin{tikzcd}
		\MTDer{(B)}{P/\PQ{Q}{w}}\arrow[r,"\gind{w}"]\arrow[d,"\wr"]&\MTDer{(B)}{G/Q}\arrow[d,"\wr"]\\
		\Hotb(H(P/\PQ{Q}{w})\Smodules^{\Z,ev})\arrow[r,"?"]&\Hotb(H(G/Q)\Smodules^{\Z,ev})
	\end{tikzcd}
\end{center}
%\subsection{Erweiterungssatz, Tilting and Soergel modules}
Let $(X,S)$ be an affinely Whitney-Tate stratified variety. Then the  \emph{hypercohomology functor} is defined by 
$$\Hyp: \MTDer{S}{X}\rightarrow H(X)\modules^{\Z\times\Z}, M\mapsto\bigoplus_{i,j\in\Z}\Hom{\Motives{X}}{\un_X}{M(i)[j]},$$
where $H(X)\defi \Hyp(\un_X)$.
\begin{theorem}[Erweiterungssatz]
	Let $X\in \Var(\C)$ be a partial flag variety. Then the hypercohomology functor 
	\begin{center}
		\begin{tikzcd}
			\Hyp: \MTDer{(B)}{X}_{w=0}\arrow[r,hook,"\sim"]&H(X)\modules^\Z
		\end{tikzcd}
	\end{center}
 	is fully faithful on weight zero stratified mixed Tate motives.
\end{theorem}
\begin{proof}
	See \cite{ginsburg1991perverse} for a proof using mixed Hodge modules. All the proof really relies on is a six functor formalism and a theory of weights. It hence also holds in our setting as spelled out in \cite[Theorem 8.4]{SoeWe}.
\end{proof}
\begin{definition}
	The modules in the essential image of $\Hyp$ are called (graded) Soergel modules, so that $\Hyp$ induces an equivalence of categories:
	\begin{center}
		\begin{tikzcd}
			\Hyp: \MTDer{(B)}{X}_{w=0}\arrow[r,"\sim"]&H(X)\Smodules^{\Z,ev}
		\end{tikzcd}
	\end{center}
	between weight zero stratified mixed Tate motives and the category of evenly graded Soergel modules over $H(X)$ denoted $H(X)\Smodules^{\Z,ev}$.
\end{definition}
\begin{remark}
	Let $X=G/Q$. Abbreviate $C=H(G/Q)$. Then the category $C\Smodules^\Z$ is generated by modules of the form
	$$C\otimes_{C^{s_n}}\dots C\otimes_{C^{s_1}}\C$$
	for simple reflections $s_i$, with respect to finite direct sums, taking direct summands, shifts and isomorphism. This corresponds to the next Lemma and the fact that for regular $X=G/B$ all simple perverse motives in $\MTDer{(B)}{G/B}$ can appear as (shifts of) direct summands in the motives modules
	$$\pi_n^!\pi_{n,!}\cdots\pi_1^!\pi_{1,!}i_{pt,!}\un$$
	where $\pi_i: G/B\rightarrow G/(B\cup Bs_iB)$ is the projection and $i_{pt}: B/B\rightarrow G/B$ the inclusion of the point.
\end{remark}
\begin{lemma}[Theorem 14 \cite{Soe90}]
	Let $B\subset Q'\subset Q\subset G$ be parabolic subgroups and let $\pi: G/Q'\rightarrow G/Q$ be the projection. Then there are natural isomorphisms of functors
	\begin{align*}
	\Hyp\pi^*&\cong H(G/Q')\otimes_{H(G/Q)}\Hyp\text{ and}\\
	\Hyp\pi_*&\cong \res_{H(G/Q')}^{H(G/Q)}\Hyp.
	\end{align*}
\end{lemma}
\begin{corollary}[Corollary 9.4 \cite{SoeWe}]
	Let $X=G/Q$ be a flag variety. There are equivalences of triangulated categories
	\begin{center}
		\begin{tikzcd}[column sep= 20]
			 \MTDer{(B)}{X}\arrow[r,"\sim"',"\Delta"]&\Hotb(\MTDer{(B)}{X}_{w=0})\arrow[r,"\sim"',"\Hyp"]&\Hotb(H(X)\Smodules^{\Z,ev})
		\end{tikzcd}
	\end{center}
	where $\Delta$ denotes the tilting equivalence, see Theorem \ref{thm:tiltingformotives}.
\end{corollary}
\begin{proof}
	\sloppy The proof uses that for partial flag varieties all objects in $\MTDer{(B)}{X}_{w=0}$ are additionally pointwise pure.
\end{proof}
%\subsection{Cohomology rings of flag varieties}\label{subsec:cohomofflagvar}
There is a completely explicit description of the cohomology ring of flag varieties, due to Borel. In this section we describe how this is compatible with respect to the inclusion $P/Q_e\hookrightarrow G/Q$.
\begin{lemma} \label{lem:cohomologyringandcoinvregular}Denote by $X(T)$ the character lattice of the torus $T\subset G$ and by $$S=\Sym{X(T)\otimes_\Z\C}$$
	the symmetric algebra of its complexification.
	Then following diagram of short exact sequences commutes
	\begin{center}
		\begin{tikzcd}[row sep=0.6cm]
			0\arrow[r] & S(S^{\W}_+)\arrow[hook,r]\arrow[hook,d]&S\arrow[r,two heads,"c_1"]&H(G/B)\arrow[r]\arrow[d,"\hop_e^*"]  & 0\\
			0\arrow[r]& S(S^{\W_P}_+)\arrow[hook,r] &S\arrow[r,two heads,"c_1"] & H(P/B)\arrow[r] & 0\\
		\end{tikzcd}
	\end{center}
	where $c_1$ denotes the map induced by the first Chern class of a line bundle induced by a character of $T$.
\end{lemma}
\begin{proof}
	$H(G/B)$ and $H(P/B)$ are the de Rham cohomology groups of $G/B$ and $P/B$ and this is the classical Borel image. The diagram commutes since pullback of line bundles and taking Chern classes commutes.
\end{proof}
\begin{lemma}\label{lem:cohomologyringandcoinv}
	Let $Q\subset G$ be a standard parabolic. Then the following diagram commutes
	\begin{center}
		\begin{tikzcd}
			\CoInvi{G}{Q}\defi(S/S(S^{\W}_+))^{\W_Q}\arrow[rightarrow,r]\arrow[d,"\wr"] & (S/S(S^{\W_P}_+))^{\WPQ{Q}{e}}\arrow[d,"\wr"]\defi \CoInvi{\PQ{Q}{e}}{P}\\
			H(G/Q) \arrow[two heads,r,"\hop_e^*"] & H(P/\PQ{Q}{e})
		\end{tikzcd}
	\end{center}
\end{lemma}
\begin{proof}
	The isomorphism $C^Q_G\cong H(G/Q)$ is established by identifying the image of the injection $$\pi^*: H(G/Q)\hookrightarrow H(G/B)$$ with the $\W_Q$-invariants in $S/S(S^{\W}_+)$, where $\pi:G/B\rightarrow G/Q$ denotes the projection, see \cite{bernstein1973schubert}. The same holds for $H(P/\PQ{Q}{e})$ and the statement follows by $\hop_e\pi=\pi \hop_e$ and Lemma \ref{lem:cohomologyringandcoinvregular}.
\end{proof}
We are now able to prove how geometric parabolic induction $\gind{e}=\h{e,*}$ interact. This is the base case of the inductive proof of the general case $\gind{w}$.
\begin{theorem}\label{thm:inductionbasecasegeom}
	Let $Q\subset G$ be a standard parabolic. The following diagram of functors commutes up to natural isomorphism
	\begin{center}
		\begin{tikzcd}[column sep=2cm]
			\MTDer{(B)}{P/{\PQ{Q}{e}}} \arrow[r,"\gind{e}=\h{e,*}"]\arrow[d,"\Hyp"] & \MTDer{(B)}{G/Q}\arrow[d,"\Hyp"]\\
			\CoInvi{\PQ{Q}{e}}{P}\modules^{\Z}\arrow[rightarrow,r,"\operatorname{Res}^{\CoInvi{G}{Q}}_{\CoInvi{\PQ{Q}{e}}{P}}"] & \CoInvi{G}{Q}\modules^{\Z}
		\end{tikzcd}
	\end{center}
\end{theorem}
\begin{proof}
	Let $M\in \MTDer{(B)}{P/{\PQ{Q}{e}}}$. By definition we have
	\begin{align*}
	\Hyp(h_{e,*}M) &=\bigoplus_{i,j\in\Z}\Hom{\Motives{G/Q}}{\un_{G/Q}}{h_{e,*}M(i)[j]}\\
	&=\bigoplus_{i,j\in\Z}\Hom{\Motives{P/{\PQ{Q}{e}}}}{h_{e}^*\un_{G/Q}}{M(i)[j]}\\
	&=\bigoplus_{i,j\in\Z}\Hom{\Motives{P/{\PQ{Q}{e}}}}{\un_{P/{\PQ{Q}{e}}}}{M(i)[j]}\\
	&=\Hyp(M)
	\end{align*}
	and the statement follows from Lemma \ref{lem:cohomologyringandcoinv}.
\end{proof}
\begin{corollary}\label{cor:inductionbasecasegeom}Let $Q\subset G$ be a standard parabolic.
	Then the following diagram of functors commutes up to natural isomorphism
	\begin{center}
		\begin{tikzcd}
			\MTDer{(B)}{P/{\PQ{Q}{e}}} \arrow[r,"\gind{e}=\h{e,*}"]\arrow[d,"\wr","\Delta"'] & \MTDer{(B)}{G/Q}\arrow[d,"\Delta"',"\wr"]\\
			\Hotb(\MTDer{(B)}{P/Q_w}_{w=0})\arrow[d,"\wr","\Hyp"']&\Hotb(\MTDer{(B)}{G/Q}_{w=0})\arrow[d,"\wr"',"\Hyp"]\\
			\Hotb(C^{\PQ{Q}{e}}_P\Smodules^{\Z,ev})\arrow[rightarrow,r,"\operatorname{Res}^{\CoInvi{G}{Q}}_{\CoInvi{\PQ{Q}{e}}{P}}"] & \Hotb(C^{Q}_G\Smodules^{\Z,ev})
		\end{tikzcd}
	\end{center}
\end{corollary}
\begin{proof}
	Follows from Theorem \ref{thm:inductionbasecasegeom}. We use that $\h{e}$ is a closed embedding, and hence $\h{e,*}=\h{e,!}$ acts on the homotopy categories of weight zero motives by pointwise application, see Theorem \ref{thm:tiltingformotivesandfunctors}.
\end{proof}
%\subsection{Geometric parabolic induction and Soergel bimodules}
%\subsubsection{The regular case}
Recall that $B\subset P\subset G$ was a parabolic subgroup. Abbreviate
$$\Coi=\CoInv{G}{B}= H(G/B) \cong \Sym{X(T)\otimes_\Z\C}/(\Sym{X(T)\otimes_\Z\C}^{\W}_+).$$
This is a graded ring, living in \emph{even} and positive degrees. Denote for a graded module $M$ its $n$-th shift by $M\langle n\rangle$, such that
$$(M\langle n\rangle)^i=M^{i+n}.$$
For a simple reflection $s$, denote by $\Coi^s$ the $s$-invariants. Then $\Coi^s\subset \Coi$ is a Frobenius extension, and we denote by
$$\Rouqs{s}\defi\dots\rightarrow0\rightarrow \Coi\rightarrow \Coi\otimes_{\Coi^s}\Coi\langle 2\rangle\rightarrow0\rightarrow\dots$$
the complex of graded Soergel bimodules over $\Coi$ known as \emph{Rouquier complex}. Here $\Coi\otimes_{\Coi^s}\Coi\langle 2\rangle$ lives in cohomological degree 0, and the map is the unit of the adjunction between $\res_{\Coi}^{\Coi^s}(-\langle 1\rangle)$ and $\Coi\otimes_{\Coi^s}-\langle 1\rangle$. For a reduced expression $w=s_n\cdots s_1$ of $w\in\W$, we define a complex of graded Soergel bimodules by
$$\underbar{R}_{\underline{w}}\defi \Rouqs{s_1}\otimes_{\Coi}\cdots\otimes_{\Coi} \Rouqs{s_n}.$$
Abbreviate
$$\res\defi \res_{\CoInvi{P}{B}}^{\Coi}.$$
In the rest of this section we will---among other things---prove that on the level of graded Soergel modules, geometric parabolic induction for a regular flag variety
$$\gind{w}:\MTDer{(B)}{P/B}\rightarrow\MTDer{(B)}{G/B}$$
is given by the functor
$$\sind{w}{}\defi\Rouq{\underline{w}}\otimes_{\Coi}\res(-).$$
\begin{theorem}\label{thm:sindregulargeom}
Let $w$ be a shortest coset representative in $\W_{P}\backslash\W_G$.%such that $s_n\cdots s_i$ is also a shortest coset representative for all $i$
Then the following diagram of functors commutes up to natural isomorphism 
	\begin{center}
		\begin{tikzcd}[column sep=3cm]
			\MTDer{(B)}{P/B}\arrow[r,"\gind{w}"]\arrow[d,"\wr","\Delta"']&\MTDer{(B)}{G/B}\arrow[d,"\wr"',"\Delta"]\\
			\Hotb(\MTDer{(B)}{P/B}_{w=0})\arrow[d,"\wr","\Hyp"']&\Hotb(\MTDer{(B)}{G/B}_{w=0})\arrow[d,"\wr"',"\Hyp"]\\
			\Hotb(\CoInvi{P}{B}\Smodules^{\Z,ev})\arrow[r,"\sind{w}{}"]&\Hotb(\Coi\Smodules^{\Z,ev}).
		\end{tikzcd}
	\end{center}
\end{theorem}

\begin{proof}The proof mainly relies on Theorem \ref{thm:wallcrossinggeom} and an induction on $l(w)$.
	First assume that $l(w)=0$, then $w=e$ and the statement is Corollary \ref{cor:inductionbasecasegeom}.
	
	Now let $ws>w$ with both $ws$ and $w$ shortest representatives in $\W_P\backslash\W$. Assuming that the statement holds for $w$, we show that it holds for $ws$. 
	
	Denote by $\Delta$ the tilting equivalence. Let $\pi: G/B\rightarrow G/(B\cup BsB)$ be the projection. Let $M\in\MTDer{(B)}{P/B}$. 
	We have the following diagram of distinguished triangles:
	\begin{center}
		\begin{tikzcd}
			\Hyp\Delta\gind{w} M
			\arrow[r]\arrow[d,equal]&
			\Hyp\Delta\pi^!\pi_!\gind{w} M
			\arrow[r]\arrow[d,"\wr","(1)"']&
			\Hyp\Delta\gind{ws} M
			\arrow[r,"+1"]\arrow[d,equal]&~
			\\
			\Hyp\Delta\gind{w} M
			\arrow[r,"(*)"]\arrow[d,"\wr","(2)"']&
			\Coi\otimes_{\Coi^s}\Hyp\Delta\gind{w} M\langle2\rangle
			\arrow[r]\arrow[d,"\wr","(2)"']&
			\Hyp\Delta\gind{ws} M
			\arrow[r,"+1"]\arrow[d,equal]&~
			\\
			\sind{w}{}\Hyp\Delta M
			\arrow[r]&
			\Coi\otimes_{\Coi^s}\sind{w}{}\Hyp\Delta M\langle2\rangle
			\arrow[r]&
			\Hyp\Delta\gind{ws} M
			\arrow[r,"+1"]&~
		\end{tikzcd}
	\end{center}
	The first triangle is given by Theorem \ref{thm:wallcrossinggeom}.
	
	(1) Since $\pi^!\pi_!$ commutes with $\Delta$ by \ref{thm:tiltingformotivesandfunctors}. On Soergel modules $\pi^!\pi_!$ is given by $\Coi\otimes_{\Coi^s}\langle 2\rangle$, see \cite[Korollar 2]{Soe90}.
	
	(2) This is the induction hypothesis.
	
	($*$) This is given by the adjunction homomorphism by Theorem \ref{thm:wallcrossinggeom}.
	
	We hence have the following isomorphism
	\begin{align*}
	\Hyp\Delta\gind{w} M\cong
	& \operatorname{Cone}(\sind{w}{}\Hyp\Delta M\rightarrow
	\Coi\otimes_{\Coi^s}\sind{w}{}\Hyp\Delta M\langle 2\rangle)\\
	=&\Rouq{s}\otimes_C\sind{w}{}\Hyp\Delta M\\
	=&\sind{ws}{}\Hyp\Delta M
	\end{align*}
	where by $\operatorname{Cone}$ we denote the mapping cone.
	This is indeed a \emph{natural} isomorphism by the discussion in Corollary \ref{cor:coneisnaturalgeomparind} and Lemma \ref{lem:triangulatedfunctor}.
\end{proof}
%\subsubsection{The singular case}
Now let $B\subset Q\subset G$ be another parabolic subgroup. Let $w$ be a shortest coset representative in $\W_P\backslash\W/\W_Q$. Recall that $\W_{Q,w}=\W_P\cap w\W_Qw^{-1}$ and let $m=|\W_{Q,w}|$. 
Then there are natural maps
$$\CoInvi{G}{Q}\rightarrow\Coi=\CoInvi{G}{B}\leftarrow\CoInvi{P}{B}\leftarrow\CoInvi{P}{Q_w}$$
%$$\res\defi \res_{\CoInv{\levi}{\lambda}}^{\CoInv{\g}{\lambda}}\res_{\CoInv{\levi}{w\cdot\mu}}^{\CoInv{\levi}{\lambda}}$$
On the level of graded Soergel modules, geometric parabolic induction for partial flag varieties (or better an $m$-fold direct sum of shifted copies of it)
$$\bigoplus_{x\in \W_{P,w}}\gind{w}(-)(-l(x))[-2l(x)]
:\MTDer{(B)}{P/Q_w}\rightarrow\MTDer{(B)}{G/Q}$$
is given by the functor
$$\sindn{w}{Q}\defi\res_{\Coi}^{\CoInvi{G}{Q}}\Rouq{\underline{w}}\otimes_{\Coi}\CoInvi{P}{B}\otimes_{\CoInvi{P}{Q_w}}(-).$$
\begin{theorem}\label{thm:gindandsindsingular}
	let $B\subset Q\subset G$ be another parabolic subgroup. Let $w$ be a shortest coset representative in $\W_P\backslash\W/\W_Q$. Let $w=s_n\cdots s_1$ a reduced expression. % such that $s_n\cdots s_i$ is a shortest coset representative for $\W_P\backslash\W$ for all all $i$. 
	Abbreviate
	$$\overline{\operatorname{GInd}}_w=\bigoplus_{x\in \WPQ{Q}{w}}\gind{w}(-)(-l(x))[-2l(x)]$$Then the following diagram of functors commutes up to natural isomorphism 
	\begin{center}
		\begin{tikzcd}[column sep=3cm]
				\MTDer{(B)}{P/Q_w}\arrow[r,"\overline{\operatorname{GInd}}_w"]\arrow[d,"\wr","\Delta"']&\MTDer{(B)}{G/Q}\arrow[d,"\wr"',"\Delta"]\\
				\Hotb(\MTDer{(B)}{P/Q_w}_{w=0})\arrow[d,"\wr","\Hyp"']&\Hotb(\MTDer{(B)}{G/Q}_{w=0})\arrow[d,"\wr"',"\Hyp"]\\
				\Hotb(\CoInvi{P}{Q_w}\Smodules^{\Z,ev})\arrow[r,"\sindn{w}{Q}"]&\Hotb(\CoInvi{G}{Q}\Smodules^{\Z,ev}).
		\end{tikzcd}
	\end{center}
\end{theorem}
\begin{proof}
	Denote by $\pi: G/B\rightarrow G/Q$ and $\pi':P/B\rightarrow P/{Q_w}$ the projection. Then on the one hand we have
	$$\pi_!\gind{w}\pi'^!\cong\gind{w}\pi'_!\pi'^!\cong\overline{\operatorname{GInd}}_w=\bigoplus_{x\in \WPQ{Q}{w}}\gind{w}(-)(l(x))[2l(x)]$$
	by Theorem \ref{thm:fromsingtoreggeom}. On the other hand
	\begin{align*}
	\Hyp\Delta\pi_*\gind{w}\pi'^*&\cong\res_{\Coi}^{\CoInvi{G}{Q}}\Hyp\Delta\gind{w}\pi'^*\\
	&\cong\res_{\Coi}^{\CoInvi{G}{Q}}\Rouq{\underline{w}}\otimes_{\Coi}\Hyp\Delta\pi'^*\\
	&\cong\res_{\Coi}^{\CoInvi{G}{Q}}\Rouq{\underline{w}}\otimes_{\Coi}\CoInvi{P}{B}\otimes_{\CoInvi{P}{Q_w}}\Hyp\Delta\\
	&=\sindn{w}{Q}\Hyp\Delta
	\end{align*}
	where we use that $\pi_*$ and $\pi'_*$ commute with tilting, since $\pi'$ is proper and smooth (Theorem \ref{thm:tiltingformotivesandfunctors}), and that under the hypercohomology functor $\Hyp$, $\pi_*$ corresponds to $\res_{\Coi}^{\CoInvi{G}{Q}}$ and $\pi^*$ to $\Coi\otimes_{\CoInvi{G}{Q}}$, see \cite[Theorem 14]{Soe90}. 
	Now $\pi_!=\pi_*$ since $\pi$ is proper and $\pi'^*=\pi'^!(-d)[-2d]$ since $\pi'$ is smooth, where $d$ denotes the relative dimension of $\pi'$. But $d$ is exactly the length of the longest word in $\WPQ{Q}{w}$, hence
	\begin{align*}
	\pi_*\gind{w}\pi'^*&\cong\bigoplus_{x\in \WPQ{Q}{w}}\gind{w}(-)(l(x)-d)[2l(x)-2d]\\
	&=\bigoplus_{x\in \WPQ{Q}{w}}\gind{w}(-)(-l(x))[-2l(x)]
	\end{align*}
	and the statement follows.
\end{proof}

\section{Main Results}
%!TEX root = ../main.tex
%\section{Setup}
\subsection{Setup}
We recall and compare some of the notations of Section 2 and 3.
Let $\g\supset\borel\supset\cartan$ be a reductive Lie algebra with Borel and Cartan subalgebra. Denote by $G\supset B\supset T$ a \emph{Langlands dual} algebraic group over $\C$, i.e a group such that the root system with simple roots associated to $\operatorname{Lie}(G)\supset \operatorname{Lie}(B) \supset \operatorname{Lie}(T)$ is dual to the one of $\g\supset\borel\supset\cartan$. The Weyl group $\W$ and simple roots $\mathcal S$ corresponding to $\g\supset \borel$ and $G\supset B$ are hence identified. Now let 
\begin{align*}
	\borel&\subset\para\twoheadrightarrow\levi\\ 
	B&\subset P\twoheadrightarrow L
\end{align*}
be corresponding parabolic subgroups/algebras with their Levi factor, i.e. $\W_P=\Wi$.
Let $\lambda\in\cartan^*$ be a dominant integral weight. Then the stabilizer of $\lambda$ with respect to the dot action $\W_{\g,\lambda}$ is generated by simple roots. Hence $\lambda$ corresponds to a standard parabolic subgroup
$$B\subset Q\subset G$$
with  $\W_{\g,\lambda}=\W_Q$. We also have equalities $\WPQ{Q}{w}=\W_P\cap w\W_Qw^{-1}=\W_{\levi,w\cdot\lambda}$, for $w\in\W$ shortest coset representatives of $\W_P\backslash\W/\W_Q$.

We can naturally identify
\begin{align*}
H(P/Q_w)=\CoInv{P}{Q_w}&=(\Sym{X(T)\otimes_Z\C}/(\Sym{X(T)\otimes_Z\C}^{\W_P}_+))^{\WPQ{Q}{w}}\\&=(\Sym{\cartan}/(\Sym{\cartan}^{\Wi}_+))^{\W_{\levi,w\cdot\lambda}}\\&=\CoInv{\levi}{w\cdot\lambda}=\End{\levi}(\aP{\levi}{w\cdot\lambda})
\end{align*}
 and similarly
\begin{align*}
H(G/Q)=\CoInv{G}{Q}&=(\Sym{X(T)\otimes_Z\C}/(\Sym{X(T)\otimes_Z\C}^{\W}_+))^{\W_Q}\\&=(\Sym{\cartan}/(\Sym{\cartan}^{\Weyl}_+))^{\W_{\g,\lambda}}\\&=\CoInv{\g}{\lambda}=\End{\g}(\aP{\g}{w\cdot\lambda})
\end{align*}
where we use the natural identification $X(T)\otimes_\Z\C=\cartan$.

Furthermore, their categories of (graded) Soergel modules, which is defined as the essential image of projective modules and weight zero stratified mixed Tate motives under Soergel's functor $\mathbb V$ and the hypercohomology functor $\Hyp$, respectively, coincide. By this we mean, that functor $v$, forgetting the grading, restricts to a functor
\begin{align*}
v:&H(G/Q)\Smodules^{\Z,ev}\rightarrow \End{\levi}(\aP{\g}{\lambda})\Smodules\\
v:&H(P/Q_w)\Smodules^{\Z,ev}\rightarrow \End{\levi}(\aP{\levi}{w\cdot\lambda})\Smodules
\end{align*}
and every module on the right hand side can be lifted, i.e. has a preimage under $v$. %The same statement holds for $G/Q$ and $\g$. 
%\section{Results}
\subsection{Geometric Parabolic Induction and Parabolic Induction}
Combining the results from Section 2 and 3, we obtain our main theorem.
\begin{theorem}\label{thm:main}
	Let $\lambda\in\cartan^*$ be a dominant integral weight and $Q\subset G$ the corresponding standard parabolic subgroup. Let $w\in \W$ be a shortest coset representative in $\W_P\backslash\W/W_Q=\Wi\backslash\Weyl/\W_{\g,\lambda}$. Let $n=|\W_{\levi,w\cdot\lambda}|$ and
	\begin{align*}
	\overline{\operatorname{GInd}}_w&=\bigoplus_{x\in \WPQ{Q}{w}}\gind{w}(-)(-l(x))[-2l(x)]
	\end{align*}
	Then the following diagram commutes up to natural isomorphism
	\begin{center}
		\begin{tikzcd}
		\MTDer{(B)}{P/Q_w}\arrow[r,"\overline{\operatorname{GInd}}_w"]\arrow[ddddd,"v"', bend right=72]\arrow[d,"\wr","\Delta"']&\MTDer{(B)}{G/Q}\arrow[d,"\wr"',"\Delta"]\arrow[ddddd,"v", bend left=72]\\
		\Hotb(\MTDer{(B)}{P/Q_w}_{w=0})\arrow[d,"\wr","\Hyp"']&\Hotb(\MTDer{(B)}{G/Q}_{w=0})\arrow[d,"\wr"',"\Hyp"]\\
		\Hotb(\CoInvi{P}{Q_w}\Smodules^{\Z,ev})\arrow[r,"\sindn{w}{Q}"]\arrow[d,"v"']&\Hotb(\CoInvi{G}{Q}\Smodules^{\Z,ev})\arrow[d,"v"]\\
		\Hotb(\CoInv{\levi}{w\cdot\lambda}\Smodules)\arrow[r,"\sindn{w}{\lambda}"]&\Hotb(\CoInv{\g}{\lambda}\Smodules)\\
		\Hotb(\Proje\cato{\levi}{w\cdot\lambda})\arrow[u,"\wr"',"\V{\levi}{w\cdot\lambda}"]&\Hotb(\Proje \cato{\g}{\lambda})\arrow[u,"\wr","\V{\g}{\lambda}"']\\
		\Derb(\cato{\levi}{w\cdot\lambda})\arrow[r,"(\ind)^{\oplus n}"]\arrow[u,"\wr"']&\Derb(\cato{\g}{\lambda})\arrow[u,"\wr"]
	\end{tikzcd}
	\end{center}
\end{theorem}
\begin{proof}
	The upper and lower rectangles are Theorem \ref{thm:gindandsindsingular} and \ref{thm:indandsindsingular}. By definition $v \sindn{w}{Q}=\sindn{w}{\lambda}v$ and the statement follows.
\end{proof}
Unfortunately, we will not prove that the corresponding
diagram with just  $\ind$ and $\gind{w}$ and without the direct sum commutes.
But let us sketch a possible approach. In the proof of Theorem \ref{thm:paraindgradable} we will show
how a Krull-Remak-Schmidt argument allows to get rid of the direct sum for
the restrictions of the (geometric) parabolic to the heart of a $t$-structure on
the categories and show that the following diagram of functors commutes (up
to natural isomorphism):
\begin{center}
\begin{tikzcd}
\MTDer{(B)}{P/Q_w}^\heartsuit\arrow[r, "\gind{w}"]\arrow[d, "v"]&\MTDer{(B)}{G/Q}^\heartsuit\arrow[d, "v"]\\
\cato{\levi}{w\cdot\lambda}\arrow[r, "\ind"]&\cato{\g}{\lambda}
\end{tikzcd}
\end{center}
Now it would suffice to show that the following diagram of functors commutes
\begin{center}
\begin{tikzcd}
\Derb(\MTDer{(B)}{P/Q_w}^\heartsuit)\arrow[r, "\gind{w}"]\arrow[d, "real","\wr"']&\Derb(\MTDer{(B)}{G/Q}^\heartsuit)\arrow[d, "real","\wr"']\\
\MTDer{(B)}{P/Q_w}\arrow[r, "\gind{w}"]&\MTDer{(B)}{G/Q}
\end{tikzcd}
\end{center}
where the upper horizontal arrow is given by pointwise application of the
($t$-exact) functor $\gind{w}$. This is true by using for example \cite[Lemma A.7.1]{beilinson1987derived} or \cite[Theorem 1.3.3.2]{lur}. Both results require the existence of a lift of
$\gind{w}$ to some upgraded category of motives; an f-category in the former
and a stable $\infty$-category in the latter. Since $\gind{w} = \hop_{w,*}\pr{w}^!$
 is defined using the six functors, this lift exists. Introducing the necessary notation would go beyond the scope of this article. We hence omit a proof.

For regular weights everything just works fine.
\begin{corollary}
	Let $\lambda\in\cartan^*$ be a \emph{regular} integral dominant weight and $w\in \Weyl$ be a shortest coset representative in $\Wi\backslash\Weyl$ then the following diagram commutes up to natural isomorphism.
	\begin{center}
		\begin{tikzcd}[column sep= 1.5cm]
			\MTDer{(B)}{P/B}\arrow[r,"\gind{w}"]\arrow[d,"v"']&\MTDer{(B)}{G/B}\arrow[d,"v"]\\
			\Derb(\cato{\levi}{w\cdot\lambda})\arrow[r,"\ind"]&\Derb(\cato{\g}{\lambda})
		\end{tikzcd}
	\end{center}
\end{corollary}
\subsection{Graded Parabolic Induction}
The main goal of this section is to use our results to show that parabolic induction for integral blocks of category $\mathcal O$ is \emph{gradable} (see \cite[Definition 3.3]{Str}), which means that there is a functor $\grind$ making the following diagram commute up to natural isomorphism
\begin{center}
	\begin{tikzcd}
		\catoz{\levi}{w\cdot\lambda}\arrow[r,"\grind"]\arrow[d,"v"] &\catoz{\g}{\lambda}\arrow[d,"v"]\\
		\cato{\levi}{w\cdot\lambda}\arrow[r,"\ind"] &\cato{\g}{\lambda}
	\end{tikzcd}
\end{center}
and fulfilling $\grind\langle n\rangle=\langle n\rangle \grind$, where $\mathcal O^\Z$ denotes the \emph{graded category $\mathcal{O}$} as defined in \cite{BGS} and $\langle - \rangle$ denotes the shift of grading. 

Graded category $\mathcal{O}$ (for a fixed block) is constructed by establishing a grading on the ring 
	$$A=\End{\mathcal{O}}(P)$$
where $P$ denotes a (minimal) projective generator of the given block and then defining graded category $\mathcal{O}$ as the category of finitely generated graded modules over $A$.
\begin{center}
	\begin{tikzcd}[column sep= 2 cm]
		\mathcal O^\Z_\lambda\arrow[r,equal,"def"]\arrow[d,"v"]&\operatorname{mod}^\Z\operatorname{-}A \arrow[d,"v"]\\
		\mathcal O_\lambda \arrow[r,"\Hom{\mathcal O}{P}{-}","\sim"'] &\operatorname{mod-}A
	\end{tikzcd}
\end{center}
This grading on $A$ is established by realizing it as $\operatorname{Ext}$-ring of a certain complex of sheaves on the Langlands dual flag variety.

Let us explain what this concretely means in our setting. See \cite[Section 11]{SoeWe} for a reference. Fix a dominant integral weight $\lambda$ and the corresponding standard parabolic $Q$. Denote by 
\begin{align*}
	P &=\bigoplus_{x\in \Weyl/\W_{\g,\lambda}} P_\g(x\cdot\lambda)\in \cato{\g}{\lambda}\text{ and}\\
	%L &=\bigoplus_{x\in \W/\W_Q} \mathcal{IC}(G/Q,BxQ/Q)\in \operatorname{MTPer}_{(B)}(G/Q)_{w=0}
	L &=\bigoplus_{x\in \W/\W_Q} \mathcal{IC}_x\in \operatorname{MTDer}_{(B)}(G/Q)_{w=0}
\end{align*}
the sum of the indecomposable projectives and the sum of simple weight zero perverse stratified  mixed Tate motives $\mathcal{IC}_x$ supported on $\overline{BxQ/Q}$, respectively.
Denote by
\begin{align*}
A &=\End{\g}(P)\text{ and}\\
A' &=\bigoplus_{i\in\Z}\Hom{\Motives{G/Q}}{L}{L(i)[2i]}.
\end{align*}
By showing $\mathbb{V}(P_\g(x\cdot\lambda))\cong \Hyp(\mathcal{IC}_x)$ as $\CoInv{G}{Q}=\CoInv{\g}{\lambda}$-modules and using Soergel's Erweiterungssatz and Struktursatz, one sees that in fact $A\cong A'$. This puts a grading on $A$. To be compatible with \cite{BGS}, we redefine this grading to be even, i.e. we want the shift $(i)[2i]$ to correspond to $\langle 2i\rangle$, or in other words $A'_{2i}=\Hom{\Motives{G/Q}}{L}{L(i)[2i]}$ and $A'_{2i+1}=0$. Then the graded category $\mathcal O$ is defined by 
$$\catoz{\g}{\lambda}\defi\operatorname{mod^\Z-}A'$$
Denote by $B\cong B'$ the algebras analogously defined for $P/Q_w$ and $\cato{\levi}{w\cdot\lambda}$.

The tilting equivalence from Theorem \ref{thm:tiltingformotives} can also be stated as an equivalence
$$\Delta: \MTDer{(B)}{G/Q}\stackrel{\sim}{\rightarrow}\Derb(\operatorname{mod^{\Z,ev}-}A')$$
as discussed in Remark \ref{rem:tiltingformotives}.
This equivalence equips $\MTDer{(B)}{G/Q}$ with a $t$-structure, which is the Koszul dual of the perverse $t$-structure, see \cite[Section 1.4]{SoeWe}. We denote its heart by $\MTDer{(B)}{G/Q}^\heartsuit$.
Hence mixed stratified Tate motives on the flag variety provide a geometric realization of the evenly graded category $\mathcal O$
\begin{align*}
\MTDer{(B)}{G/Q}^\heartsuit &\cong\operatorname{mod^{\Z,ev}-}A'\cong\catozev{\g}{\lambda}\\
\MTDer{(B)}{P/Q_w}^\heartsuit&\cong\operatorname{mod^{\Z,ev}-}B'\cong\catozev{\levi}{w\cdot\lambda}
\end{align*}
and we can use our geometric construction to show that parabolic induction is gradable.
\begin{theorem}\label{thm:paraindgradable}
	Let $\lambda$ be a dominant integral weight. Then there is a functor $\grind$ compatible with the shift of grading $\langle n\rangle$, making the following diagram commute
	\begin{center}
		\begin{tikzcd}
		\catoz{\levi}{w\cdot\lambda}\arrow[r,"\grind"]\arrow[d,"v"] &\catoz{\g}{\lambda}\arrow[d,"v"]\\
		\cato{\levi}{w\cdot\lambda}\arrow[r,"\ind"] &\cato{\g}{\lambda}.
	\end{tikzcd}
	\end{center}
\end{theorem}
\begin{proof}
In the notation of Theorem \ref{thm:main} consider the following diagram.
	\begin{center}
		\begin{tikzcd}[column sep=0.3cm]
		\MTDer{(B)}{P/Q_w}\arrow[rrr,"\overline{\operatorname{GInd}}_w"]\arrow[rd,"\Delta","\sim"'] \arrow[ddd,"v"]&[-10pt]&&[-10pt]\MTDer{(B)}{G/Q}\arrow[ld,"\Delta"',,"\sim"]\arrow[ddd,"v"] \\
		&\Derb(\operatorname{mod^{\Z,ev}-}B')\arrow[d,"v"]\arrow[r,"F"]&\Derb(\operatorname{mod^{\Z,ev}-}A')\arrow[d,"v"]&\\
		&\Derb(\operatorname{mod-}B)\arrow[ld,"\sim"]\arrow[r,"G"]&\Derb(\operatorname{mod-}A)\arrow[rd,"\sim"']&\\
		\Derb(\cato{\levi}{w\cdot\lambda})\arrow[rrr,"(\ind)^{\oplus n}"] &&&\Derb(\cato{\g}{\lambda}).
	\end{tikzcd}
	\end{center}
	The functors $v$ on the very left and right are defined as in Theorem \ref{thm:main} and the functors $v$ in the middle are forgetting the grading and using the isomorphisms $A\cong A'$ and $B\cong B'$.
	In fact, the trapezia on the left and the right commute (up to natural isomorphism). See Remark \ref{rem:expandeddiag} for an expanded version. Both $F$ and $G$ denote the functors induced by the equivalences. Then by definition 
	\begin{align*}
		F&=\overline F\langle i_1\rangle\oplus\dots\oplus \overline F\langle i_n\rangle\\
		G&=\overline G^{\oplus n}.
	\end{align*}
	split into direct summands. $G$ is clearly $t$-exact since $\ind$ is, hence we are precisely in the setting of Proposition \ref{prop:gradedfunctors}, which gives us a natural equivalence of functors 
	$v\overline F_0\cong \overline G_0v$ and thereby a commutative diagram
	\begin{center}
		\begin{tikzcd}[column sep=1cm]
			\MTDer{(B)}{P/Q_w}^\heartsuit\arrow[rd,"\Delta","\sim"']\arrow[rrr,"\gind{w}"] \arrow[ddd,"v"]&[-75pt]&&[-75pt]\MTDer{(B)}{G/Q}^\heartsuit\arrow[ddd,"v"]\arrow[ld,"\Delta"',,"\sim"] \\
			&\catozev{\levi}{w\cdot\lambda}=\operatorname{mod^{\Z,ev}-}B'\arrow[d,"v"]\arrow[r,"\overline F_0"]&\operatorname{mod^{\Z,ev}-}A'=\catozev{\g}{\lambda}\arrow[d,"v"]&\\
			&\operatorname{mod-}B\arrow[ld,"\sim"]\arrow[r,"\overline G_0"]&\operatorname{mod-}A\arrow[rd,"\sim"']&\\
			\cato{\levi}{w\cdot\lambda}\arrow[rrr,"\ind"] &&&\cato{\g}{\lambda}.
		\end{tikzcd}
	\end{center}
	Clearly $\overline F_0$ commutes with the shift of grading, since it is induced by $\gind{w}=\gindv{w}$, which commutes with $(i)[2i]$. So $\overline F_0$ is a grading of parabolic induction for the evenly graded category $\mathcal O$. This can be easily extended to the whole graded category $\mathcal O$ since $\mathcal O^\Z=\mathcal O^{\Z,ev}\oplus \mathcal O^{\Z,ev}\langle1\rangle$.
\end{proof}
\begin{remark}\label{rem:expandeddiag}
	The following diagram commutes (up to natural isomorphism)
	\begin{center}
		\begin{tikzcd}
			\Hotb(\MTDer{(B)}{G/Q}_{w=0})\arrow[d,equal]& \MTDer{(B)}{G/Q}\arrow[d,"\wr"',"\Delta"]\arrow[l,"\sim","\Delta"']\\
			\Hotb(\langle \mathcal{IC}_x \,|\,x\in \W \rangle^{\Motives{G/Q}}_{\oplus,(1)[2]})\arrow[r,"\sim"]\arrow[d,"\Hyp"',"\wr"]& \Derb(\operatorname{mod^{\Z}-}A')\arrow[d,"\wr"']\\
			\Hotb(\langle \Hyp(\mathcal{IC}_x) \,|\,x\in \W \rangle^{\CoInv{G}{Q}\modules^\Z}_{\oplus,\langle 2\rangle})\arrow[r,"\sim"]\arrow[d,"v"']& \Derb(\operatorname{mod^{\Z}-}\Hyp(A'))\arrow[d,"v"]\\
			\Hotb(\langle v\Hyp(\mathcal{IC}_x) \,|\,x\in \W \rangle^{\CoInv{G}{Q}\modules}_{\oplus})\arrow[r,"\sim"]\arrow[d,"\wr"] & \Derb(\operatorname{mod-}v\Hyp(A'))\arrow[d,"\wr"']\\
			\Hotb(\langle \mathbb V P_\g(x\cdot\lambda) \,|\,x\in \W \rangle^{\CoInv{\g}{\lambda}\modules}_{\oplus})\arrow[r,"\sim"] & \Derb(\operatorname{mod-}\mathbb V(A))\\
			\Hotb(\langle  P_\g(x\cdot\lambda) \,|\,x\in \W \rangle^{\cato{\g}{\lambda}}_{\oplus})\arrow[r,"\sim"]\arrow[u,"\mathbb V"',"\wr"] & \Derb(\operatorname{mod-}A)\arrow[u,"\wr"']\\
			\Hotb(\Proje{\cato{\g}{\lambda}})\arrow[r,"\sim"]\arrow[u,equal] & \Derb(\cato{\g}{\lambda})\arrow[u,"\wr"']
		\end{tikzcd}
	\end{center}
	where the horizontal arrows are the obvious equivalences and we denote
	\begin{align*}
		\Hyp(A')&\defi\bigoplus_{i\in\Z}\Hom{\CoInv{G}{Q}\modules^\Z}{\Hyp L}{\Hyp L\langle 2i\rangle},\\
		v\Hyp(A')&\defi\bigoplus_{i\in\Z}\Hom{\CoInv{G}{Q}\modules}{v\Hyp L}{v\Hyp L\langle 2i\rangle}\\&=\End{\CoInv{G}{Q}\modules}(v\mathbb H L)\text{ and}\\
		\mathbb V (A)&\defi\End{\CoInv{\g}{\lambda}\modules}(\mathbb V P).
	\end{align*}
\end{remark}

\appendix
%!TEX root = ../main.tex
\section{Some Category Theory}
In this appendix we recall some notions from category theory which may not be so
well known for the convenience of the reader.
\subsection{Graded Eilenberg-Watts Theorem}
\begin{proposition}\label{prop:gradedfunctors}
	Let $A$ and $B$ be finite dimensional graded $\C$-algebras and 
	$$v:A\modules^\Z\rightarrow A\modules\text{ and }B\modules^\Z\rightarrow B\modules$$
	be the functors forgetting the grading. Assume that there is a diagram 
	\begin{center}
		\begin{tikzcd}
			\Derb(B\modules^\Z)\arrow[r,"F"]\arrow[d,"v"]&\Derb(A\modules^\Z)\arrow[d,"v"]\\
			\Derb(B\modules)\arrow[r,"G"]&\Derb(A\modules)
		\end{tikzcd}
	\end{center}
	commuting up to natural isomorphism, and that $F$ commutes with the shift of grading. Then the following statements hold.
	\begin{enumerate}
		\item $F$ is exact with respect to the standard $t$-structure if and only if $G$ is.
		\item Assume that $F$ splits into a direct sum
		$$F=\overline F\langle i_1\rangle\oplus\dots\oplus \overline F\langle i_n\rangle.$$
		of shifted versions of a functor $\overline F$. Then $F$ is exact if and only if $\overline F$ is exact.
		\item Assume that $F$ and $G$ are exact and that there are functors $\overline F$, $\overline G$ such that
		\begin{align*}
		F&=\overline F\langle i_1\rangle\oplus\dots\oplus \overline F\langle i_n\rangle\\
		G&=\overline G^{\oplus n}.
		\end{align*}
		Denote the induced functors on the heart of the $t$-structure by $F_0$, $G_0$, $\overline F_0$, $\overline G_0$. Then the following diagram commutes up to natural isomorphism
		\begin{center}
			\begin{tikzcd}
				B\modules^\Z\arrow[r,"\overline F_0"]\arrow[d,"v"]&A\modules^\Z\arrow[d,"v"]\\
				B\modules\arrow[r,"\overline G_0"]&A\modules
			\end{tikzcd}.
		\end{center}
	\end{enumerate}
\end{proposition}
\begin{proof}
	(1) Denote by $\mathcal{H}^i(C)$ the $i$-th cohomology of a complex $C$. Then clearly $\mathcal{H}^i(vC)=v\mathcal{H}^i(C)$ for all $C\in\Derb(A\modules^\Z), \Derb(B\modules^\Z)$. Since the standard $t$-structure is defined by vanishing conditions on cohomology, the statement follows.
	
	(2) As in (1).
	
	(3) Since by assumption and points (1) and (2) $\overline F$ and $\overline G$ are exact, they restrict to the hearts of the $t$-structure, which are naturally isomorphic to $A\modules^\Z$, $B\modules^\Z$, $A\modules$ and $B\modules$, respectively. So the diagram makes sense.
	Now clearly $\overline G_0$, $G_0$, $\overline F_0$ and $ F_0$ are exact functors. By a graded version of the Eilenberg-Watts Theorem \cite{watts1960intrinsic}, this implies that there are natural isomorphisms
	\begin{align*}
	\overline F_0&\cong M\otimes_{B}- \text{ and}\\
	\overline G_0&\cong N\otimes_{B}-
	\intertext{for the (graded) $B$-$A$-bimodules $M=F_0(B),N=G_0(B)$ and hence}
	F_0&\cong \bigoplus_{j}M\langle i_j\rangle \otimes_B- \text{ and}\\
	%		G\cong v(\bigoplus_{j}M\langle i_j\rangle) \otimes_A-\cong v(M)^{\oplus n} \otimes_A-
	G_0&\cong N^{\oplus n} \otimes_B-.
	\intertext{By assumption, there is a natural equivalence $G_0v\cong vF_0$ and hence}
	v(M)^{\oplus n}&\cong v(\bigoplus_{j}M\langle i_j\rangle)\cong  N^{\oplus n}.
	\end{align*}
	Now decomposing both $v(M)$ and $N$ into a finite direct sum of indecomposables and applying the Krull--Remak--Schmidt theorem implies that there is an isomorphism 
	$$v(M)\cong N$$
	and hence a natural isomorphism \begin{align*}
	\overline{G_0}v\cong v\overline{F_0}.
	\end{align*}
	The statement follows.
\end{proof}
\subsection{Functoriality Of Cone}
\begin{lemma}\label{lem:triangulatedfunctor}
	Let $F,G:\mathcal{T}\rightarrow\mathcal{T}'$ be triangulated functors between triangulated categories $\mathcal{T}$ and $\mathcal{T}'$, and let $\phi:F\Rightarrow G$ be a morphism of functors. For $X\in \mathcal{T}$ abbreviate $C(X)\defi\operatorname{Cone}(\phi_X : F(X)\rightarrow G(X))$. 
	Assume that for $X,Y\in \mathcal{T}$ we have 
	$$\Hom{\mathcal{T}'}{F(X)[1]}{C(Y)}=0.$$ 
	Then there exists (up to natural isomorphism) a unique functor $H: \mathcal{T}\rightarrow\mathcal{T}'$ and morphisms $G\rightarrow H \rightarrow F[1]$ which induce distinguished triangles
	\begin{center}
		\disttrianglewithmaps{F(X)}{\phi_X}{G(X)}{}{H(X)}
	\end{center}
	for all $X\in \mathcal{T}$. In particular $H(X)\cong C(X).$	
\end{lemma}
\begin{proof}
	Let $f:X\rightarrow Y$ be a morphism in $\mathcal{T}$. We claim that there is a \emph{unique} morphism $C(f)$ making the following diagram commute:
	\begin{center}
		\begin{tikzcd}[column sep= 0.6cm]
			F(X)\arrow[r,"\phi_X"]\arrow[d,"F(f)"] & G(X) \arrow[r,"\psi_X"]\arrow[d,"G(f)"] & C(X)\arrow[r,"+1"]\arrow[d,"\exists!\,C(f)",dashed]& ~ \\
			F(Y)\arrow[r,"\phi_Y"] & G(Y) \arrow[r,,"\psi_Y"] & C(Y)\arrow[r,"+1"]& ~
		\end{tikzcd}
	\end{center}
	To see this, consider the long exact sequence:
	 \begin{center}
	 	\begin{tikzcd} [column sep= 0.4cm]
 		\dots\arrow[r]
 		&
 		0=\Hom{\mathcal{T}'}{F(X)[1]}{C(Y)}
 		\arrow[r]
 		&
		\Hom{\mathcal{T}'}{C(X)}{C(Y)}
		\arrow[r]
		&
		\phantom{\dots}
		\\
		\phantom{\dots}
		\arrow[r,"\psi_X^*"]
		&
		\Hom{\mathcal{T}'}{G(X)}{C(Y)}
		\arrow[r,"\phi_X^*"]
		&
		\Hom{\mathcal{T}'}{F(X)}{C(Y)}
		\arrow[r]
		&
		\dots
	 \end{tikzcd}
	 \end{center}
	 Then $\phi_X^*(\psi_Y G(f))=\psi_Y G(f)\phi_X=\psi_Y\phi_YF(f)=0$. Hence $C(f)$ exists and is uniquely determined by the equation $C(f)\psi_X=\psi_X^*(C(f))=\psi_YG(f)$.
	 
	 Moreover if $g:Y\rightarrow Z$ is a second morphism in $\mathcal{T}$, the uniqueness immediately implies $C(g)C(f)=C(gf)$. Hence $C$ defines a functor with the required properties. The same uniqueness arguments show that $C$ is uniquely determined with these properties (up to natural isomorphism).  
	 \end{proof}
\subsection{Tilting}\label{subsec:tilting}
We recall the formalism of tilting for derived (dg)-categories, as introduced in \cite{rickard}, \cite{Keller1993} and \cite{keller1994deriving} and prove a compatibilty of tilting with other functors.
\begin{definition}
	Let $\mathcal{A}$ be an abelian category. A complex $I\in \Hot(\mathcal{A})$ is called \textbf{homotopy-injective} if the natural map $$\Hom{\Hot(\mathcal{A})}{A}{I}\stackrel{\sim}{\rightarrow} \Hom{\Der(\mathcal{A})}{A}{I}$$ is an isomorphism for all $A\in\Hot(\mathcal{A})$. A complex $P\in \Hot(\mathcal{A})$ is called \textbf{homotopy-projective} if the natural map $$\Hom{\Hot(\mathcal{A})}{P}{A}\stackrel{\sim}{\rightarrow} \Hom{\Der(\mathcal{A})}{P}{A}$$ is an isomorphism for all $A\in\Hot(\mathcal{A})$.
\end{definition}
\begin{definition}
	Let $\mathcal{A}$ be an abelian category. A collection $\{T_i\}$ of complexes $T_i\in \Hot(\mathcal{A})$ is called \textbf{tilting} if for all $i,j$ and $n\in\Z$ the natural map
	$$\Hom{\Hot(\mathcal{A})}{T_i}{T_j[n]}\stackrel{\sim}{\rightarrow}\Hom{\Der(\mathcal{A})}{T_i}{T_j[n]}$$
	is an isomorphism and
	 $$\Hom{\Der(\mathcal{A})}{T_i}{T_j[n]}=0$$ for all $n\neq 0$.
\end{definition}
For complexes $M,N\in\Hot(\mathcal{P})$ in some additive category $\mathcal P$, we  denote by $\HomC{\mathcal{P}}{M}{N}\in\Hot(\mathcal{P})$ their Hom-complex.
\begin{theorem}[Tilting \cite{Keller1993}]\label{thm:tilting}
	Let $\mathcal{A}$ be an abelian category and  $\{T_i\}$ a tilting collection.
	Then there is an equivalence of triangulated categories
	$$\Delta: \Hotb(\langle \{T_i\} \rangle^{\Der(\mathcal A)}_\oplus)\stackrel{\sim}{\rightarrow}\langle \{T_i\} \rangle^{\Der(\mathcal A)}_\Delta\subset\Der(\mathcal A)$$
	called \textbf{tilting}. Here by $\langle-\rangle^{\mathcal{B}}_\oplus$ we denote closure under finite direct sums in an additive category $\mathcal{B}$ and by $\langle-\rangle^{\mathcal{B}}_\Delta$ closure under distinguished triangles in a triangulated category $\mathcal{B}$.
\end{theorem}
\begin{proof}This is copied almost word by word from \cite[Appendix B]{SoeWe}.
	We just sketch a proof for $\{T_i\}=\{T\}$. Since by assumption $$\Hom{\Hot(\mathcal{A})}{T}{T[n]}\stackrel{\sim}{\rightarrow}\Hom{\Der(\mathcal{A})}{T}{T[n]}$$   we have by d\'evissage
	$$\langle T \rangle_\oplus^{\Der}\cong \langle T \rangle_\oplus^{\Hot}\text{ and }\langle T \rangle_\Delta^{\Der}\cong \langle T \rangle_\Delta^{\Hot}.$$
	So it suffices to proof that there is an equivalence 
	$$\Delta: \Hotb(\langle T \rangle^{\Hot(\mathcal A)}_\oplus)\stackrel{\sim}{\rightarrow}\langle T \rangle^{\Hot(\mathcal A)}_\Delta.$$
	 Let $E\defi\Hom{\mathcal{A}}{T}{T}$ be the endomorphism complex of $T$. This is a differential graded algebra (dg-algebra). Let $Z\defi\mathcal Z^0(E)\oplus E^{\leq 0}$ be the truncation of $E$ and $H\defi\mathcal H^0(E)$ be the $0$-th cohomology of $E$. By the tilting property the cohomology of $E$ is concentrated in degree zero and hence the natural morphisms
	$$E\hookleftarrow Z\twoheadrightarrow H$$
	are quasi-isomorphisms of dg-algebras and furthermore $$H=\Hom{\Hot(\mathcal A)}{T}{T}\cong\Hom{\Der(\mathcal A)}{T}{T}.$$ For a dg-algebra $R$ we denote by $$\operatorname{dgHot-}R\supset\operatorname{dgFree-}R$$
	the homotopy category of right $R$-dg-modules and the triangulated subcategory generated by the free module $R$. Then there is the following chain of equivalences of triangulated categories
	\begin{center}
		\begin{tikzcd}[column sep=.5cm]
			\Hotb(\langle T \rangle^{\Hot(\mathcal A)}_\oplus) \arrow[r,"\sim","(1)"']&
			\operatorname{dgFree-}H&
			\operatorname{dgFree-}Z\arrow[l,"\sim"',"(2)"]\arrow[r,"\sim","(3)"'] &
			\operatorname{dgFree-}E & 
			\langle T\rangle^{\Hot(\mathcal A)}_\Delta\arrow[l,"\sim"',"(4)"]
		\end{tikzcd}
	\end{center}
	\begin{enumerate}
		\item The following functor induces an equivalence of categories
		$$\HomC{\Hot(\mathcal{A})}{T}{-}: \langle T \rangle^{\Hot(\mathcal A)}_\oplus\stackrel{\sim}{\rightarrow}\langle H \rangle^{\operatorname{mod-}H}_\oplus.$$
		Since $H$ is a dg-algebra concentrated in degree $0$, dg-modules over $H$ are just complexes of $H$-modules and we have an equivalence
		$$\Hotb(\langle T \rangle^{\Hot(\mathcal A)}_\oplus)\stackrel{\sim}{\rightarrow}\Hotb(\langle H \rangle^{\operatorname{mod-}H}_\oplus)=\operatorname{dgFree-}H.$$
		\item This is $-\otimes_Z H$, which is an equivalence since $Z\twoheadrightarrow H$ is a quasi-isomorphism.
		\item This is $-\otimes_Z E$, which is an equivalence since $Z\hookrightarrow E$ is a quasi-isomorphism.
		\item This is given by the functor
		$$\HomC{\mathcal{A}}{T}{-}: \langle T \rangle^{\Hot(\mathcal A)}_\Delta\stackrel{\sim}{\rightarrow}\operatorname{dgFree-}E.$$
	\end{enumerate}
	Our tilting functor $\Delta$ is defined as the composition of those equivalences (or their inverse functors).
\end{proof}
\begin{remark}\label{rem:dualtilting}
	We could have also used the functors $\Hom{\Hot \mathcal A}{-}{T}$ and $\Hom{\mathcal A}{-}{T}$. Then our tilting equivalence would be of the form 
	\begin{center}
		\begin{tikzcd}[column sep=1cm]
		\Hotb(\langle T \rangle^{\Hot(\mathcal A)}_\oplus) \arrow[r,"\sim"]&
		(H\operatorname{-dgFree})^{\operatorname{op}}& \\
		(Z\operatorname{-dgFree})^{\operatorname{op}} \arrow[r,"\sim"]\arrow[ur,"\sim"'] &
		(E\operatorname{-dgFree})^{\operatorname{op}} & 
		\langle T\rangle^{\Hot(\mathcal A)}_\Delta\arrow[l,"\sim"']
	\end{tikzcd}
	\end{center}
	But we rather avoid opposite categories.
\end{remark}
\begin{proposition}[Tilting and functors]\label{prop:tiltingandfunctors} Let $\mathcal{A},\mathcal{B}$ be abelian categories. Let $\{T_i\}\subset \Hot(\mathcal{A})$ and $\{U_i\}\subset \Hot(\mathcal{B})$ be tilting collections. Assume furthermore that all $U_i$ are homotopy-projective. 
	
Let $F:\mathcal{A}\rightarrow \mathcal{B}$ be an exact functor. Assume that for all $T_i$ there exists a $U_j$ and a quasi-isomorphism $c_i:U_j\rightarrow F(T_i)$. Then the following diagram commutes up to natural isomorphism
	\begin{center}
		\begin{tikzcd}
			\Hotb(\langle \{T_i\} \rangle^{\Der(\mathcal A)}_\oplus) \arrow[r,"\Delta","\sim"']\arrow[d,"F"]& 
			\langle \{T_i\} \rangle_\Delta^{\Der(\mathcal A)}\arrow[r,hook]\arrow[d,"F"]&
			\Der(\mathcal A)\arrow[d,"F"]
			\\
			\Hotb(\langle \{U_i\} \rangle^{\Der(\mathcal B)}_\oplus) \arrow[r,"\Delta","\sim"']& 
			\langle \{U_i\} \rangle_\Delta^{\Der(\mathcal B)}\arrow[r,hook]&
			\Der(\mathcal B)
		\end{tikzcd}
	\end{center}
	where on the left side $F$ acts by pointwise application or in other words, it is induced by an sequence of functors
	$$\langle \{T_i\}\rangle_\oplus^{\Der(\mathcal A)}\stackrel{\sim}{\leftarrow}\langle \{T_i\}\rangle_\oplus^{\Hot(\mathcal A)}\stackrel{F}{\rightarrow}\langle \{F(T_i)\}\rangle_\oplus^{\Hot(\mathcal B)}\rightarrow\langle \{F(T_i)\}\rangle_\oplus^{\Der(\mathcal B)}\hookrightarrow\langle \{U_i\}\rangle_\oplus^{\Der(\mathcal B)}$$
\end{proposition}
\begin{proof}
	This is copied almost word by word from \cite[Appendix]{SoeWeVir}.
	Again, we restrict ourselves to the case $\{T_i\}=\{T\}$ and $\{U_i\}=\{U\}$ and a quasi-isomorphism  $c:U\rightarrow F(T)$. We abbreviate $S=F(T)$ and $E_T=\HomC{\mathcal A}{T}{T}$, $E_S=\HomC{\mathcal B}{S}{S}$, $E_U=\HomC{\mathcal B}{U}{U}$. Let $Z_T,Z_S,Z_U$ and $H_T,H_S,H_U$ their degree 0 truncation and 0-th cohomology, respectively.
We consider a diagram where the top and bottom row are defining the tilting equivalences $\Delta$ as in Theorem \ref{thm:tilting} and fill it up with natural isomorphisms.
\begin{center}
	\adjustbox{max width=1.05\textwidth}{\hspace{-20pt}
		\begin{tikzcd}[column sep=1.35cm,row sep=2cm]
	\Hotb(\langle T \rangle^{\Hot(\mathcal A)}_\oplus) \arrow[r,"\sim"',"\Hom{\Hot(\mathcal A)}{T}{-}"]\arrow[d,"F"']\arrow[rd,phantom,"(1)"]&
	\operatorname{dgFree-}H_T \arrow[d,"-\otimes_{H_T}H_S"']\arrow[rd,phantom,"(2)"] &[-18pt]
	\operatorname{dgFree-}Z_T\arrow[l,"\sim","-\otimes_{Z_T}H_T"']\arrow[r,"-\otimes_{Z_T}E_T","\sim"'] \arrow[rd,phantom,"(3)"]\arrow[d,"-\otimes_{Z_T}Z_S"']&[-18pt]
	\operatorname{dgFree-}E_T \arrow[rd,phantom,"(4)"]\arrow[d,"-\otimes_{E_T}E_S"']& 
	\langle T\rangle^{\Hot(\mathcal A)}_\Delta\arrow[l,"\sim","\Hom{\mathcal A}{T}{-}"']\arrow[d,"F"']
	\\
	\Hotb(\langle S \rangle^{\Hot(\mathcal A)}_\oplus) \arrow[r,"\sim"',"\Hom{\Hot(\mathcal B)}{S}{-}"]\arrow[d]\arrow[rd,phantom,"(5)"]&
	\operatorname{dgFree-}H_S \arrow[d,"-\otimes_{H_S}H_X"']\arrow[rd,phantom,"(6)"] &
	\operatorname{dgFree-}Z_S\arrow[l,"-\otimes_{Z_S}H_S"']\arrow[r,"-\otimes_{Z_S}E_S"] \arrow[rd,phantom,"(7)"]\arrow[d,"-\otimes_{Z_S}Z_X"']&
	\operatorname{dgFree-}E_S \arrow[rd,phantom,"(8)"]\arrow[d,"-\otimes_{E_S}X"']& 
	\langle S\rangle^{\Hot(\mathcal B)}_\Delta\arrow[l,"\sim","\Hom{\mathcal B}{S}{-}"']\arrow[d]
	\\
	\Hotb(\langle U \rangle^{\Der(\mathcal B)}_\oplus) \arrow[r,"\sim"',"\Hom{\Der(\mathcal B)}{U}{-}"]&
	\operatorname{dgFree^{Der}-}H_U\arrow[r,"\sim"',"\res_{H_U}^{Z_U}"]&
	\operatorname{dgFree^{Der}-}Z_U &
	\operatorname{dgFree^{Der}-}E_U\arrow[l,"\sim","\res_{E_U}^{Z_U}"'] & 
	\langle U\rangle^{\Der(\mathcal B)}_\Delta\arrow[l,"\sim","\Hom{\mathcal B}{U}{-}"']
	\\
	\Hotb(\langle U \rangle^{\Hot(\mathcal B)}_\oplus) \arrow[r,"\sim"',"\Hom{\Hot(\mathcal B)}{U}{-}"]\arrow[u,"\wr"']\arrow[ru,phantom,""]&
	\operatorname{dgFree-}H_U \arrow[u,"\wr"']\arrow[ru,phantom,""] &
	\operatorname{dgFree-}Z_U\arrow[l,"\sim","-\otimes_{Z_U}H_U"']\arrow[r,"-\otimes_{Z_U}E_U","\sim"'] \arrow[ru,phantom,""]\arrow[u,"\wr"']&
	\operatorname{dgFree-}E_U \arrow[ru,phantom,""]\arrow[u,"\wr"']& 
	\langle U\rangle^{\Hot(\mathcal B)}_\Delta\arrow[l,"\sim","\Hom{\mathcal B}{U}{-}"']\arrow[u,"\wr"']
	\end{tikzcd}}
\end{center}
	\begin{enumerate}
		\item Here we use the map $$F:H_T=\Hom{Hot(\mathcal{A})}{T}{T}\rightarrow\Hom{Hot(\mathcal{B})}{F(T)}{F(T)}=H_S.$$
		The natural transformation is given by $$\Hom{\Hot(\mathcal A)}{T}{-}\otimes_{H_T}H_S\stackrel{\operatorname{comp}(F\otimes\id)}{\rightarrow} \Hom{\Hot(\mathcal B)}{F(T)}{F(-)}$$
		where by $\operatorname{comp}$ be denote composition. This is clearly an isomorphism when applied to $T$ and hence restricts to a natural isomorphism by devissage.
		\item The morphism $Z_T\rightarrow Z_S$ is induced by  $$F:E_T=\HomC{\mathcal{A}}{T}{T}\rightarrow\HomC{\mathcal{B}}{F(T)}{F(T)}= E_S.$$
		The following diagram commutes.
		\begin{center}
			\begin{tikzcd}
				Z_T\arrow[r,two heads]\arrow[d,"F"]& H_T\arrow[d,"F"]\\
				Z_S\arrow[r,two heads]& H_S
			\end{tikzcd}
		\end{center} 
		We hence get a natural isomorphism
		$$-\otimes_{Z_T}H_T\otimes_{H_T}H_S\cong -\otimes_{Z_T}Z_S\otimes_{Z_S}H_S.$$
		\item As in the last point.
		\item The natural transformation is given by
		$$\Hom{\mathcal A}{T}{-}\otimes_{E_T}E_S\stackrel{\operatorname{comp}(F\otimes\id)}{\rightarrow} \Hom{\mathcal{B}}{F(T)}{F(-)}.$$
		Again this restricts to a natural isomorphism by devissage.
	\end{enumerate}
	For a dg-algebra $R$ we denote by $\operatorname{dgFree^{Der}-}R$ the full triangulated subcategory of $\operatorname{dgDer-}R$ generated by the free module $R$. Localization induces an equivalence of categories $$\operatorname{dgFree-}R\stackrel{\sim}{\rightarrow}\operatorname{dgFree^{Der}-}R.$$
	We denote $X=\HomC{\mathcal B}{U}{S}$. This is a $E_S$-$E_U$-dg-bimodule. We denote by $Z_X$ and $H_X$ its  degree 0 trunctation and 0-th cohomology, respectively. Since $U$ is homotopy-projective by assumption, the cohomology of $X$ is concentrated in degree 0 and $$H_X=\Hom{\Hot\mathcal B}{U}{S}=\Hom{\Der\mathcal B}{U}{S}\mathop{\leftarrow}_{\sim}^{c_*}\Hom{\Der\mathcal B}{U}{U}=H_U$$ is freely generated by $\left[c\right]$ as right (dg-)module over $H_U$. Furthermore, the maps
	$$H_X\twoheadleftarrow Z_X\hookrightarrow X$$ 
	are quasi-isomorphisms of $Z_S$-$Z_U$-dg-bimodules.
	\begin{enumerate}[resume]
		\item The left vertical arrow is defined via 
		$$\langle S \rangle^{\Hot(\mathcal B)}_\oplus\rightarrow \langle S \rangle^{\Der(\mathcal B)}_\oplus=\langle U \rangle^{\Der(\mathcal B)}_\oplus.$$
		The right vertical arrow restricts to $\operatorname{dgFree^{Der}-}H_U$ since $H_X$ is free. The natural isomorphism is defined similarly as in (1) by 
		\begin{align*}
		\Hom{\Hot(\mathcal B)}{S}{-}\otimes_{H_S}H_X\stackrel{\operatorname{comp}}{\rightarrow} &\Hom{\Hot(\mathcal B)}{U}{-}\\
		=&\Hom{\Der(\mathcal B)}{U}{-}.
		\end{align*}
		\item The right vertical and bottom horizontal arrow restrict to $\operatorname{dgFree^{Der}-}Z_U$, since $H_U$ and $Z_X$ are quasi-isomorphic to $Z_U$ as $Z_U$-dg-modules. It is easy to see that the following diagram commutes up to natural isomorphism.
		\begin{center}
			\begin{tikzcd}[column sep=2cm,row sep=1cm]
				\operatorname{dgFree-}H_S
				\arrow[d,"-\otimes_{H_S}H_X"']&
				\operatorname{dgFree-}Z_S
				\arrow[l,"-\otimes_{Z_S}H_S"']\arrow[d,"-\otimes_{Z_S}Z_X"']
				\\
				\langle H_X \rangle^{\operatorname{dgHot-}H_U}_\Delta
				\arrow[d,"\wr"]&
				\langle Z_X \rangle^{\operatorname{dgHot-}Z_U}_\Delta
				\arrow[l,"-\otimes_{Z_S}H_S"']\arrow[d,"\wr"]
				\\
				\operatorname{dgFree^{Der}-}H_U
				\arrow[r,"\res_{Z_U}^{H_U}"]
				&
				\operatorname{dgFree^{Der}-}Z_U
			\end{tikzcd}
		\end{center}
		Here we use $\res_{Z_U}^{H_U}$, which goes in the wrong direction, to avoid using its inverse functor, which is the derived functor $-\otimes^{\mathbb L}_{Z_U}H_U$.
		\item As in the last point.
		\item Similar to (5).
	\end{enumerate}
	The squares in the bottom are easily filled with natural isomorphisms and we are finished with this proof.
\end{proof}
\begin{remark}\label{rem:dualtiltingandfunctors}
	The completely dual statements hold when we instead require homotopy-injective resolutions $c:F(T)\rightarrow U$ and use the tilting equivalence discussed in Remark \ref{rem:dualtilting}.
\end{remark}

\bibliographystyle{amsalpha} 
\bibliography{main}

\end{document}